\crefname{equation}{}{} 
\crefname{enumi}{}{} 
\crefname{figure}{Figure}{Figures}
\newtheorem{theorem}{Theorem}[section]
\newtheorem{lemma}[theorem]{Lemma}
\newtheorem{proposition}[theorem]{Proposition}
\newtheorem{corollary}[theorem]{Corollary}
\theoremstyle{definition}
\newtheorem{definition}[theorem]{Definition}
\newtheorem{remark}[theorem]{Remark}
\newtheorem{conjecture}[theorem]{Conjecture}
\newcommand{\R}{\ensuremath{\mathbb R}} 
\newcommand{\Z}{\ensuremath{\mathbb Z}} 
\newcommand{\N}{\ensuremath{\mathbb N}} 
\DeclareMathOperator{\diam}{diam}
\DeclareMathOperator{\dist}{dist} 
\newcommand{\eps}{\ensuremath{\varepsilon}}
\newcommand\oneminM{%
\ensuremath{\text{-}\mathrm{One\hspace{1pt}Min\hspace{1pt}M}}%
}
\newcommand\InfminM{\ensuremath{\mathrm{Min\hspace{1pt}M}}}
\newcommand\InfoneminM{\ensuremath{\mathrm{One\hspace{1pt}Min\hspace{1pt}M}}}
\numberwithin{equation}{section}
\newcommand\res{\mathop{\hbox{\vrule height 7pt width .3pt depth 0pt\vrule height .3pt width 5pt depth 0pt}}\nolimits}
\title[Besicovitch's problem and linear programming]{Besicovitch's $\frac{1}{2}$ problem and linear programming}
\author[C. De Lellis]{Camillo De Lellis}
\address{School of Mathematics, Institute for Advanced Study, 1 Einstein Dr., Princeton NJ 08540, USA}
\email{camillo.delellis@ias.edu}
\author[F. Glaudo]{Federico Glaudo}
\address{School of Mathematics, Institute for Advanced Study, 1 Einstein Dr., Princeton NJ 08540, USA}
\email{federico.glaudo@ias.edu}
\author[A. Massaccesi]{Annalisa Massaccesi}
\address{Dipartimento di Matematica T. Levi-Civita, via Trieste 63, 35121 Padova, Italy
\newline\indent
School of Mathematics, Institute for Advanced Study, 1 Einstein Dr., Princeton NJ 08540, USA} 
\email{annalisa.massaccesi@unipd.it}
\author[D. Vittone]{Davide Vittone}
\address{Dipartimento di Matematica T. Levi-Civita, via Trieste 63, 35121 Padova, Italy
\newline\indent
School of Mathematics, Institute for Advanced Study, 1 Einstein Dr., Princeton NJ 08540, USA}
\email{davide.vittone@unipd.it}
\begin{document}

\begin{abstract}
We consider the following classical conjecture of Besicovitch: a $1$-dimensional Borel set in the plane with finite Hausdorff $1$-dimensional measure $\mathcal{H}^1$ which has lower density strictly larger than $\frac{1}{2}$ almost everywhere must be countably rectifiable. We improve the best known bound, due to Preiss and Ti\v{s}er, showing that the statement is indeed true if $\frac{1}{2}$ is replaced by $\frac{7}{10}$ (in fact we improve the Preiss-Ti\v{s}er bound even for the corresponding statement in general metric spaces). More importantly, we propose a family of variational problems to produce the latter and many other similar bounds and we study several properties of them, paving the way for further improvements. 
\end{abstract}

\maketitle

\tableofcontents

\section{Introduction}
The aim of this note is to report on some progress on the following well-known conjecture. 

\begin{conjecture}\label{c:Bes}
Assume $E\subset \mathbb R^2$ is a Borel set with $\mathcal{H}^1 (E) < \infty$ and assume that 
\[
\Theta^1_* (E, x) := \liminf_{r\downarrow 0} \frac{\mathcal{H}^1 (B_r (x)\cap E)}{2r} > \frac{1}{2}
\quad \mbox{for $\mathcal{H}^1$-a.e. $x\in E$.}
\]
Then $E$ is countably $1$-rectifiable. 
\end{conjecture}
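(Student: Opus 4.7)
The plan is an argument by contradiction built around the variational framework $\InfminM$ introduced in the paper. Assume $E$ satisfies the hypotheses but fails to be countably $1$-rectifiable; by a standard reduction (Federer's decomposition into rectifiable and purely unrectifiable parts, together with the fact that the rectifiable piece carries zero upper density at $\mathcal H^1$-a.e.\ point of the purely unrectifiable piece) one reduces to the following assertion: a purely $1$-unrectifiable set $F\subset\mathbb R^2$ with $\mathcal H^1(F)<\infty$ must satisfy $\Theta^1_*(F,x)\le 1/2$ at $\mathcal H^1$-a.e.\ $x$. This is the contrapositive of \Cref{c:Bes}.

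Fix a generic $x_0\in F$ with $\theta:=\Theta^1_*(F,x_0)>1/2$ and study the local structure of $\mu:=\mathcal H^1\res F$ at $x_0$ via blow-ups $\mu_{x_0,r_k}$ along a sequence $r_k\downarrow 0$ realising the $\liminf$. Standard upper density bounds for $1$-sets with finite $\mathcal H^1$-measure guarantee that these rescalings are locally uniformly finite, so a subsequence converges weakly to a non-degenerate tangent measure $\nu$. Pure unrectifiability of $F$ at $x_0$ translates at $\mathcal H^1$-a.e.\ point into rigid structural constraints on $\nu$ --- crucially, $\nu$ cannot be concentrated on a single line through the origin --- and these constraints are precisely the data fed into the linear program $\InfminM$.

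The decisive step is to establish that whenever $\theta>1/2$ and the non-line-concentration constraint on $\nu$ holds, the variational problem $\InfminM$ admits no feasible configuration compatible with the density $\theta$ actually realised by the tangent measure. Equivalently, the paper parameterises a family of linear programs whose infeasibility certifies rectifiability, and the task is to verify infeasibility on the entire interval $\theta\in(1/2,7/10]$. An LP-duality argument then produces explicit dual feasible vectors which, once combined with the density lower bound on concentric balls, yield the desired contradiction with pure unrectifiability.

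The main obstacle --- and the crux of \Cref{c:Bes} --- is exactly this sharp infeasibility bound: the paper establishes it only for $\theta>7/10$ by producing explicit dual feasible vectors for a symmetric family of primal instances. I expect closing the remaining interval to require a quantitative rigidity input of Preiss type: near-minimizers of the primal problem with $\theta$ just above $1/2$ should be $\varepsilon$-close to a straight segment on a definite scale, forcing $\nu$ to be flat and contradicting pure unrectifiability. Producing such dual certificates uniformly as $\theta\downarrow 1/2$ --- or equivalently, establishing the accompanying rigidity --- is the heart of Besicovitch's conjecture and, to my knowledge, remains genuinely open.
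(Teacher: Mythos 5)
There is a genuine gap, and it is the whole problem: the statement you were asked to prove is \cref{c:Bes} itself, which is an open conjecture that the paper explicitly does \emph{not} prove --- it establishes only $\bar\sigma\le 0.7$ (\cref{t:best-bound}), i.e.\ the assertion with $\tfrac12$ replaced by $\tfrac{7}{10}$. Your own write-up concedes this in the last paragraph: the ``decisive step'' (infeasibility, or the accompanying Preiss-type rigidity, uniformly for $\theta\in(\tfrac12,\tfrac7{10}]$) is precisely what nobody knows how to do. So what you have is a strategy outline with the crucial step missing, not a proof; no amount of LP duality bookkeeping around it closes that hole.

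Beyond that, the sketch also misdescribes the machinery you would be leaning on, in ways that matter if you wanted to pursue it. The paper never passes to blow-ups or tangent measures: the implication from the variational problem to the density statement runs through the Preiss--Ti\v{s}er Besicovitch pair condition (\cref{def:BPC-property}, \cref{prop:pt_pair_condition}) and \cref{lem:ubertechno} --- if the pair condition failed, one of the two nearby compact pieces $E_1,E_2$ would itself be a $(\sigma',L)$-stable configuration with $M_\sigma$ arbitrarily small, contradicting $\InfminM_{\sigma''}(L')>0$. Linear programming enters only as a device to evaluate $M_\sigma(P)$ for finite point configurations (\cref{sec:linear-programming}), and the bound $2\oneminM_{0.7}>0$ is obtained by a computer-assisted exhaustion over a discretized parameter space using the Lipschitz estimate of \cref{lem:M-lip}, not by ``explicit dual feasible vectors for a symmetric family of primal instances.'' Finally, the paper's negative results constrain your proposed route: $\InfoneminM_\sigma=0$ already at $\sigma\approx 0.64368$ (\cref{thm:0.64}) and $2\oneminM_{0.683}=0$ (\cref{thm:0.7}), so the one-stable relaxation you are implicitly invoking provably cannot reach $\tfrac12$; any attack on the full conjecture within this framework must exploit the stronger stability behind $\InfminM_\sigma$, whose behavior for $\sigma$ near $\tfrac12$ is unknown, or an entirely new rigidity input of the kind you mention --- which remains open.
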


As usual, countable $k$-rectifiability means that the set in question can be covered, up to an $\mathcal{H}^k$-null set, by countably many Lipschitz images of $\mathbb R^k$.
\cref{c:Bes} is perhaps the oldest open problem in Geometric Measure Theory and was stated at first in Besicovitch's foundational work about his theory of ``linearly measurable subsets of the plane'', cf. \cite{Besicovitch0}. At page 454 he writes: ``It is a most interesting question to find the exact value of this bound. It is plausible that the right theorem is\footnote{In Besicovitch's terminology ``irregular'' is equivalent to ``purely unrectifiable''.}: 

\smallskip

\noindent {\em At almost all points of an irregular set the lower density is always less than or equal to $\frac{1}{2}$.}''

\smallskip

 The problem is then further mentioned in \cite[Page 329]{Besicovitch} and became well known among experts, see e.g. \cite[Page 44]{Falconer}. 
 It is convenient to reformulate it as follows.

\begin{definition}\label{def:bar-sigma}
Denote by $\bar{\sigma}$ the infimum of all numbers $\sigma$ for which the following statement is true for every Borel set $E\subset \R^2$ with $\mathcal{H}^1 (E)<\infty$.
\begin{itemize}
    \item[(B)] If $\Theta^1_* (E,x) \geq \sigma$ for $\mathcal{H}^1$-a.e. $x\in E$, then $E$ is 
countably $1$-rectifiable.
\end{itemize}
\end{definition}

\cref{c:Bes} states therefore that $\bar \sigma =\frac{1}{2}$ and the main progress of this note can be summarized in the following theorem. 

\begin{theorem}\label{t:best-bound}
$\bar\sigma \leq 0.7$.
\end{theorem}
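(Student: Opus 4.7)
The plan is to argue by contradiction. Suppose $E \subset \mathbb{R}^2$ is Borel, $\mathcal{H}^1(E) < \infty$, $\Theta^1_*(E,x) \geq \sigma$ $\mathcal{H}^1$-a.e. for some fixed $\sigma > 0.7$, and a positive $\mathcal{H}^1$-measure part of $E$ is purely $1$-unrectifiable; the goal is to derive a contradiction via a carefully chosen linear program. Replacing $E$ by its purely unrectifiable part (still of positive measure), standard density theory yields $\mathcal{H}^1$-a.e.\ $x\in E$ with upper density $\Theta^{*,1}(E,x)\le 1$, and Preiss' weak-tangent theory rules out tangent measures supported on lines at such $x$ (otherwise an approximate tangent line would exist and contradict pure unrectifiability). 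Fix one such point $x_0$.

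\textbf{Configuration.} For a positive-density set of scales $r\downarrow 0$, the hypothesis gives $\mathcal{H}^1(E\cap B_r(x_0))\ge 2\sigma r$, while the absence of a tangent measure on a line forces $E\cap B_r(x_0)$ to contain several ``branches'' emanating in non-aligned directions. Iterating the same selection at smaller scales and at auxiliary points picked on these branches produces a finite tree of balls $\{B_{r_i}(x_i)\}$ carrying three kinds of quantitative information: the lower mass bounds $\mathcal{H}^1(E\cap B_{r_i}(x_i))\ge 2\sigma r_i$ coming from the density hypothesis, upper mass bounds coming from $\Theta^{*,1}\le 1$, and overlap/angular constraints coming from the non-tangency of $E$ at the selected points.

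\textbf{Linear program.} Treating the masses $m_i=\mathcal{H}^1(E\cap B_{r_i}(x_i))$ and radii $r_i$ as unknowns, the constraints listed above define a finite-dimensional linear program whose feasibility is a necessary condition for a purely unrectifiable $E$ with density at least $\sigma$ to exist. The main technical step is then to engineer this program so that its value can be computed (say, by exhibiting an explicit dual feasible solution) and to show that it is infeasible for every $\sigma>0.7$. The infimum of feasible $\sigma$'s, taken over all admissible configurations, is an upper bound for $\bar\sigma$; organizing the chain of auxiliary points and scales optimally yields $\bar\sigma\le 0.7$.

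\textbf{Main obstacle.} The decisive difficulty lies in the design of the configuration: pushing past the Preiss--Ti\v{s}er threshold of roughly $0.737$ requires configurations substantially richer than a single ``bad neighbor'' argument, presumably multiscale trees balancing the density inequalities against the non-tangency constraints in a combinatorially optimal way. A secondary but still delicate issue is to verify that the discrete LP faithfully encodes the continuous problem: one must check that every purely unrectifiable high-density set can be coarse-grained into a configuration obeying all imposed linear inequalities, without losing the sharpness needed to reach $0.7$ rather than a weaker numerical constant.
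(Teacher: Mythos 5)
There is a genuine gap: what you have written is a plan whose decisive steps are exactly the ones left unspecified, and the two mechanisms you do specify are not the ones that make the argument work.

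First, the bridge from ``purely unrectifiable with lower density $\ge\sigma$'' to a finite configuration of balls is not tangent-measure theory or ``angular non-tangency constraints.'' The paper's route (following Preiss--Ti\v{s}er) is the \emph{Besicovitch pair condition}: pure unrectifiability lets one produce two disjoint Borel pieces $E_1,E_2$ at arbitrarily small positive distance, \cref{l:furbata} gives the one-sided bound $\mu(B_R(p_i)\cap E_i)\le R+\tfrac{d(p_1,p_2)}{2}$ for at least one $i$, and the negation of the pair condition gives $\mu(B_r(x)\cap E_1)>2\sigma r-\eps$ for all $x\in E_1$ and small $r$. The ``branching'' then comes purely from $\mu(S)\le\diam(S)$: a set of measure $\ge 2\sigma r-\eps$ inside $B_r(x)$ has diameter $\ge 2\sigma r-\eps$, hence contains two points at distance $\ge 2\sigma' r$. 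This is the stability condition (St)/(1-St), and it is a metric statement with no angular content; indeed the paper's bound $\sigma_{PT}$ holds in arbitrary metric spaces. Your proposed constraints (``overlap/angular constraints coming from non-tangency'') are not quantified and it is unclear they can be extracted at a.e.\ point in a form strong enough to enter a linear program.

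Second, and more seriously, the resulting variational problem is a \emph{min-max}, not a single linear program: one must show that $\inf_P M_\sigma(P)>0$, where the infimum runs over all stable configurations $P$ and $M_\sigma(P)$ is itself a maximum over admissible radii. Only the inner maximization is a linear program (after the case split into $M_\sigma^\sharp$ and $M_\sigma^\flat$ and over subsets of $P$); the outer infimum is over a continuum of configurations, and no ``explicit dual feasible solution'' of one LP certifies positivity uniformly in $P$. In the paper, reaching $0.7$ requires the second-generation problem $2\oneminM_{0.7}>0$: after symmetry reductions the configuration space is still high-dimensional, and positivity is established by a Lipschitz bound on $M_\sigma^\sharp$ (\cref{lem:M-lip}) combined with a computer-assisted exhaustion over a fine discretization, together with nontrivial structural lemmas ($(S.1)_\sigma\Rightarrow(S.2)_\sigma\Rightarrow(S.3)_\sigma\Rightarrow(S.4)_\sigma$) to make the search feasible. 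Your sketch neither identifies the objective function (in particular the crucial $\min\{\diam U,\ \tfrac12+R\}$ term, whose two branches come from the two different upper mass bounds), nor explains where the number $0.7$ could come from, nor addresses how to certify a strict lower bound over all configurations rather than the infeasibility of one cleverly chosen system.
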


\begin{figure}[htbp]
\begin{tikzpicture}[scale=0.8,transform shape]
\tikzset{bharrow/.style={
    decoration={markings,mark=at position 1 with {\arrow[scale=2]{>}}},
    postaction={decorate},
    }
}
\begin{scope}[every node/.style = {
    shape = rectangle,
    minimum width= 5cm,
    minimum height=0.8cm,
    align=center}]
    \node (besicovitch) at (0,0) [draw=gray] { \scalebox{1.25}{
        $\bar\sigma \le \sigma$
    }};
    \node (minm) at (4, -2) [draw=gray] {\scalebox{1.25}{
        $\InfminM_\sigma>0$
    }};
    \node (oneminm) at (8, -4) [draw=gray] {\scalebox{1.25}{
        $\InfoneminM_\sigma>0$
    }};
    \node (twooneminm) at (12, -6) [draw=gray] {\scalebox{1.25}{
        $2\oneminM_\sigma>0$
    }};
\end{scope}

\draw[-{Latex[length=3mm,width=4mm,open]}] 
    ([xshift=0mm,yshift=1mm]minm.north) to [bend right] 
    node[sloped,midway,above=2mm,align=center] {\cref{thm:infinite-minmax}\\ (via \cite{PT})} 
    ([xshift=1mm,yshift=0mm]besicovitch.east);
    
\draw[-{Latex[length=3mm,width=4mm,open]}] 
    ([xshift=0mm,yshift=1mm]oneminm.north) to [bend right]
    node[sloped,midway,above=2mm] {\cref{lem:easy-properties-minmax}} 
    ([xshift=1mm,yshift=0mm]minm.east);
    
\draw[-{Latex[length=3mm,width=4mm,open]}] 
    ([xshift=0mm,yshift=1mm]twooneminm.north) to [bend right] 
    node[sloped,midway,above=2mm] {\cref{lem:easy-properties-minmax}} 
    ([xshift=1mm,yshift=0mm]oneminm.east);

\node at ([xshift=0mm,yshift=-7mm]besicovitch) {False for $\sigma< 0.5$};

\node at ([xshift=-5mm,yshift=-7mm]oneminm) {False for $\sigma<0.64368\ldots$~\cref{thm:0.64}};

\node at ([xshift=0mm,yshift=-7mm]twooneminm) {False for $\sigma<0.683$~\cref{thm:0.7}};

\node at ([xshift=0mm,yshift=-12mm]twooneminm) {True for $\sigma\ge 0.7$~\cref{thm:0.7}};

\end{tikzpicture}

\caption{Schematic outline of the proof of $\bar\sigma\le 0.7$~\cref{t:best-bound}. 
The diagram illustrates the key steps of our proof, along with negative results that limit (certain parts of) our approach from proving $\bar\sigma = 0.5$~\cref{c:Bes}. It is plausible that utilizing $k\oneminM_\sigma$ for $k>2$ could significantly enhance our bound, potentially reaching $\bar\sigma\leq 0.64368\ldots$, though no further improvement is possible with this strategy. Conversely, it remains uncertain whether $\InfminM_\sigma$ can be used to establish the full conjecture. 
}
\end{figure}
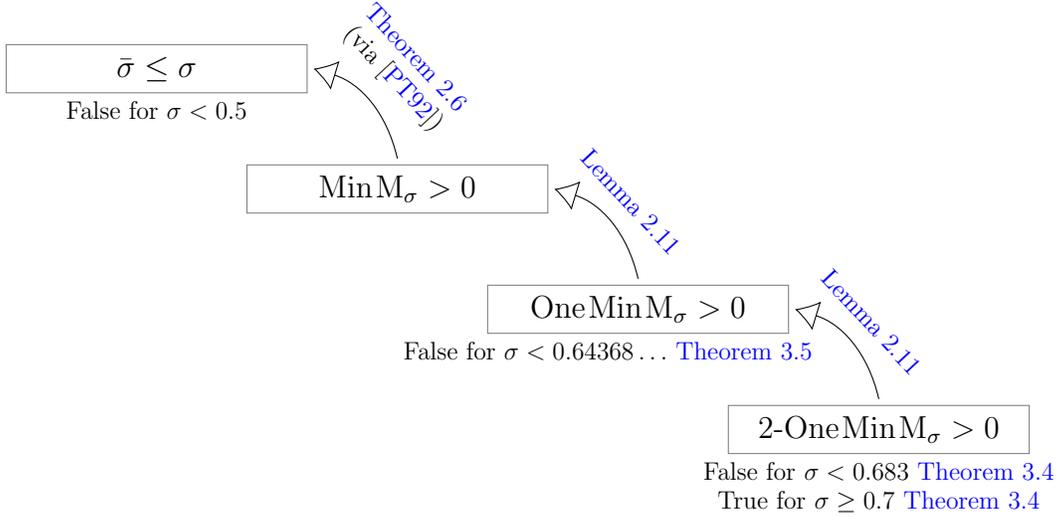

Besicovitch himself proved first in \cite{Besicovitch0} that $\bar\sigma \leq 1-10^{-2576}$ and later in \cite[Theorem 14]{Besicovitch} that $\bar\sigma \leq \frac{3}{4}$. The lower bound $\bar\sigma \geq \frac{1}{2}$ is given in \cite[Theorem 2, Section 17]{Dickinson} through a purely unrectifiable closed set $E$ with $\Theta^1_* (E, x) = \frac{1}{2}$ for $\mathcal{H}^1$-a.e. $x\in E$ (the set was suggested earlier by Besicovitch in \cite[Section 19]{Besicovitch}).
Since Besicovitch's pioneering work, his upper bound was improved in \cite{PT}:
$\bar \sigma \leq \frac{2+\sqrt{46}}{12} = 0.73186\ldots$
(in another direction, the sharp bound conjectured by Besicovitch has been proved to hold provided that the set $E$ satisfies some additional condition, see \cite{Farag-thesis,Farag1,Farag-2}). 

The main contribution of this paper is the introduction of a methodology to find upper bounds for $\bar\sigma$. In particular we define an explicit and simple infinite dimensional $\min$-$\max$ problem, which depends on a real parameter $\sigma$ and whose $\min$-$\max$ value $\InfminM_\sigma$ has the following key properties: $\sigma\mapsto \InfminM_\sigma$ is nondecreasing and nonnegative and, most importantly, $\InfminM_\sigma>0$ implies $\bar{\sigma} \leq \sigma$. For the relevant definition we refer the reader to \cref{s:statements}, where the proof of \cref{t:best-bound} is then reduced to showing $\InfminM_{0.7}>0$. 

Computing $\InfminM_\sigma$ is challenging and we thus resort to finding suitable lower bounds. This is achieved in several steps. The first is to find a second infinite-dimensional min-max problem, which we denote by $\InfoneminM_\sigma$, that satisfies $\InfminM_\sigma\geq \InfoneminM_\sigma$. However, while we do not know whether one could achieve Besicovitch's conjecture showing that $\InfminM_\sigma>0$ for every $\sigma>\frac{1}{2}$, we can exclude that $\InfoneminM$ is powerful enough, as we show that $\InfoneminM_{0.64368\ldots} = 0$. This limitation is the reason why we present both.

In the second step we define a family of finite-dimensional approximations of $\InfoneminM_\sigma$, parametrized by $k\in\N_0$ and denoted by $k\oneminM_\sigma$. These functions lie all below $\InfoneminM_\sigma$ and they approach it monotonically in $k$ as $k\uparrow \infty$. These problems consist all of finding the $\min$-$\max$ of a Lipschitz function on some suitable compact sets $K(k) \subset \mathbb R^{n(k)}$, where $n(k)$ grows with $k$. We then use them to show that
\begin{enumerate}\setcounter{enumi}{-1}
    \item $0\oneminM_\sigma>0$ if and only if $\sigma>\frac34$, hence recovering Besicovitch's result, cf. \cref{thm:0.75}.
    \item $1\oneminM_\sigma>0$ if and only if $\sigma> \sigma_{PT}=0.72655\ldots$, the unique positive zero of the third-order polynomial $8 s^3 + 4 s^2 - 3s-3$, cf. \cref{thm:0.726}. This case can be understood as an optimization of the strategy of Preiss and Ti\v{s}er.\footnote{After we completed this work, we were made aware by David Preiss that an optimization of their strategy was indeed performed by A. Schechter in his unpublished Diplomarbeit, \cite{Schechter}. Schechter does not formulate our optimization problem $1\oneminM$ but rather refines the case analysis and computations of \cite[pp. 285--287]{PT}. In particular, he finds $\sigma_{PT}$ solving a system of two algebraic equations. Unraveling Cardano's formula one can check that his specific expression (see e.g. \cite[p. 9]{Schechter}) does coincide with our definition of $\sigma_{PT}$. Therefore, his arguments can be translated into the ``if'' part of our statement (1) on $1\oneminM_\sigma$.}
    \item Finally, $2\oneminM_{0.7}>0$ (cf. \cref{thm:0.7}), which implies \cref{t:best-bound}. 
\end{enumerate}
In fact, ``Besicovitch's'' case becomes trivial in this setting, as $n(0)=1$ and the ``min'' part of the variational problem is absent. The ``Preiss-Ti\v{s}er'' case is the first nontrivial one: $n(1)=3+4=7$ (and the maximization is over three of the seven variables) but we can still compute explicitly the function $1\oneminM_\sigma$. Since $n(2) = 7 + 12 = 19$, computing $2\oneminM_\sigma$ seems an unreachable task for a human ``exact computation'': in order to give our rigorous estimate we need the assistance of a computer, which examines a (very large) finite number of cases. In this regard an important observation is that the ``max'' in all these min-maxes can be reduced to a collection of linear programming problems~\cite{Schrijver1986}, for which efficient algorithms are known. Even so, the dimensionality of the problem is so high that we need to solve nontrivial issues to make a computer-assisted proof feasible. 

Our paper contains further results. First of all we give lower bounds for what
\begin{itemize}
\item[(a)] $2\oneminM$ could achieve in \cref{c:Bes}, as we show $2\oneminM_{0.683}=0$, cf. \cref{thm:0.7};
\item[(b)] $\InfoneminM$ could achieve, showing $\InfoneminM_{\sigma}=0$ for $\sigma=0.64368\ldots$, the unique positive zero of the third-order polynomial $32s^3-32s^2+12s-3$, cf. \cref{thm:0.64}. We have some weak experimental evidence that this may be the sharp bound. In other words, there are reasons to believe that $\InfoneminM_\sigma>0$ for all $\sigma$ larger than this value. If this conjecture were true, it would imply $\bar\sigma\le 0.64368\ldots$.
\end{itemize}
Secondly, following \cite{PT} we introduce the Besicovitch number $\bar{\sigma} (X,d)$ of a metric space and we analyze the metric generalization of Besicovitch's conjecture. In this case we just introduce the metric analogs of $k\oneminM_\sigma$ to keep our presentation simpler. We then show that 
\begin{itemize}
\item[(c)] $1\oneminM_\sigma>0$ for every metric space and every $\sigma>\sigma_{PT}$, in particular $\bar{\sigma} (X,d) \leq \sigma_{PT}$ for every metric space $(X,d)$ (cf. \cref{thm:01-minmax-metric});
\item[(d)] There is a space $(X,d)$ for which $k\oneminM_{\sigma_{PT}} = 1\oneminM_{\sigma_{PT}} = 0$ for every integer $k\geq 1$, cf. \cref{thm:0.726-metric}.
\end{itemize}

Finally, we propose a ``quantitative version'' of Besicovitch's conjecture, cf. \cref{c:Bes-reg}. We cannot prove that the latter is implied or implies \cref{c:Bes}. However, all the results mentioned in this introduction translate equally well to \cref{c:Bes-reg}. The latter has the advantage that it can be formulated in a very elementary way in terms of connectedness properties of sets, without any reference to rectifiability. On the other hand we believe that it captures the essence of the problem, because $1$-dimensional rectifiable sets can be characterized as being ``big pieces'' of compact connected sets with finite length at most points and at sufficiently small scales. For the relevant discussion we defer to \cref{s:quantitative}.

\subsection*{Acknowledgments} The authors are thankful to Javier Gómez-Serrano for some discussions concerning the execution of scripts for computer-assisted proofs and to Peter M\"orters for sharing with them his personal copy of A. Schechter's Diplomarbeit \cite{Schechter}.

The third named author, A. M., has been supported by University of Padova's research programme STARS@unipd through project ``QuASAR – Questions About Structure And Regularity of currents''
(MASS STARS MUR22 01). The authors A. M. and D. V. have been partially supported by GNAMPA-INdAM and PRIN 2022PJ9EFL ``Geometric Measure Theory: Structure of Singular Measures, Regularity Theory and Applications in the Calculus of Variations''. Last but not least, this material is based upon work supported by the National Science Foundation
under Grant No. DMS-1926686.  


\section{Optimization problems}\label{s:statements}

In this section we describe the optimization problems whose solutions produce upper bounds for $\bar\sigma$. Fix $\sigma\in[\frac12,1]$.
We will first introduce the relevant objective function, called $F_\sigma$, which depends on a set of points $P\subseteq\R^2$, and a corresponding family of radii (one for each point). 
We will then maximize over a suitable space of these radii, gaining a corresponding function $M_\sigma$ which depends only on the set of points. Then, we will describe the family of set of points over which one shall compute the infimum of $M_\sigma$.
We will claim (and postpone the proof) that if such an infinite dimensional $\min$-$\max$ problem has a positive value then $\bar\sigma\le\sigma$.
The final part of this section is concerned with describing a relaxation of the above-mentioned infinite dimensional $\min$-$\max$ problem to a hierarchy of finite dimensional $\min$-$\max$ problems. These finite dimensional min-max problems are more treatable and are the ones we will use to show our main results.

\subsection{The objective function} \label{subsec:objective} Given a set of points $P\subseteq\R^2$, we define the following family of radii. 
Let
\[
\mathcal{R} (P):=\left\{ r:P\to[0,1]: \  
\begin{aligned}
    &r(p) > 0 \text{ for at most finitely many $p\in P$,}\\
    &B_{r(p)} (p)\cap B_{r(p')} (p') = \emptyset \;\; \forall p, p'\in P \text{ distinct}
\end{aligned}
\right\} \,.
\]
Here and in what follows $B_\tau (q)\subseteq\R^2$ is the open ball of radius $\tau$ centered at $q$. We understand $B_0(q)$ as the empty set; in particular, if $r(p)=0$, then another ball $B_{r(p')}(p')$ is allowed to contain $p$.

Next, for every $r\in \mathcal{R} (P)$ we set 
\begin{align*}
U(P, r) &:= \bigcup_{p\in P} B_{r(p)} (p) ,\\
R(P, r) &:= \inf \left\{R>0: B_R (O) \supset U(P,r) \right\} ,
\end{align*}
where $O=(0, 0)$ denotes the origin.
As above, since we understand $B_0(p)=\emptyset$, if $r(p)=0$ then the set $U(P, r)$ does not necessarily contain all the points $p\in P$ (cf. \cref{fig:U-and-R}).

\begin{figure}
\begin{tikzpicture}
\coordinate (P1) at (0,0);
\coordinate (P2) at (1.414,-1.414);
\coordinate (P3) at (-2.2,0);
\coordinate (P4) at (1.414, 1.414);
\coordinate (P5) at (0.9,2);
\coordinate (P6) at (2.7,2.7);

\filldraw[black] (P1) circle (1pt);
\filldraw[black] (P2) circle (1pt);
\filldraw[black] (P3) circle (1pt);
\filldraw[black] (P4) circle (1pt);
\filldraw[black] (P5) circle (1pt);
\filldraw[black] (P6) circle (1pt);

\node[below] at (P1) {$p_1=O$};
\node[left] at (P2) {$p_2$};
\node[below] at (P3) {$p_3$};
\node[below] at (P4) {$p_4$};
\node[below] at (P5) {$p_5$};
\node[below] at (P6) {$p_6$};

\draw[fill=gray, opacity=0.2] (P1) circle [radius=1];
\draw[fill=gray, opacity=0.2] (P3) circle [radius=0.7];
\draw[fill=gray, opacity=0.2] (P4) circle [radius=1];

\draw (P1) -- (1,0);
\draw (P3) -- (-1.5,0);
\draw (P4) -- (2.414, 1.414);

\draw (P1) [dashed] circle [radius = 3];
\draw (P1) [dashed] -- ({-3*1.414/2}, {3*1.414/2});

\node[above] at (-1.85,0) {$r_3$};
\node[above] at (0.5,0) {$r_1$};
\node[above] at (1.914,1.414) {$r_4$};
\node[right] at (-1.5,1.5) {$R(P, r)$};
\end{tikzpicture}
\caption{An example of $P$ consisting of six points and $U (P,r)$ consisting of three circles (the gray region). The dashed circle is the circle of radius $R(P,r)$ centered at $O$ (observe that $p_1$ coincides with $O$). For notational simplicity, we denote $r_i=r(p_i)$.
In this example $r_2=r_5=r_6=0$. While $U(P,r)$ contains the point $p_5$ (that is internal to $B_{r_4}(p_4)$), it does not contain $p_2$ nor $p_6$. It also happens that the ball $B_{R(P,r)}(p_0)$ does not contain all points of $P$; see the location of $p_6$.}
\label{fig:U-and-R}
\end{figure}
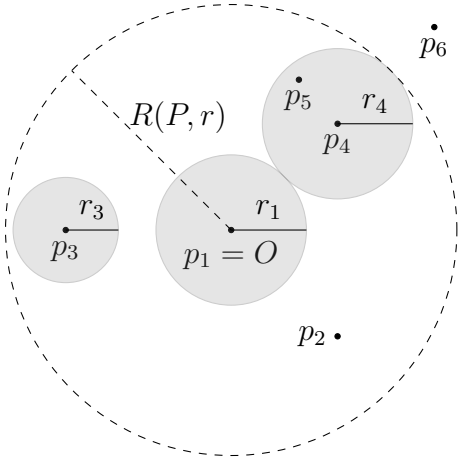

Observe that, thanks to the Euclidean structure of $\R^2$, we have the explicit formulas:
\begin{equation}\begin{aligned} \label{eq:RU-formulas}
R (P,r) &= \sup \{ |p| + r(p): \, p\in P \text{ with } r(p)>0\},\\
\diam\, (U (P,r)) &= \sup \{|p-p'|+r(p)+r(p') :\, p, p'\in P \text{ with } r(p), r(p') >0\}.
\end{aligned}\end{equation}

We are now ready to define the pivotal objective function, which depends on $\sigma \in [\frac{1}{2}, 1]$, on a {\em bounded} set $P\subseteq \R^2$, and a family of radii $r \in \mathcal{R} (P)$:
\[
F_\sigma (P, r)
:= \sum_{p\in P} r(p) - \tfrac1{2\sigma}\min \left\{ \diam\, (U (P, r)), \tfrac{1}{2} + R(P,r) \right\}\, .
\]
We agree that, if $r(p)=0$ for all $p\in P$, then $\diam\, (U (P, r))= R(P,r)=0$. Note that the set $P$ is not even assumed to be countable, however $r(p)$ can only be positive for a finite number of points, given our definition of $\mathcal{R} (P)$, hence the function is well defined.

The first part of the optimization problem consists of maximizing over $r\in \mathcal{R} (P)$:
\begin{equation}\label{e:def-M}
M_\sigma(P) :=
\max \{F_\sigma(P, r): r \in \mathcal{R} (P)\} \, .
\end{equation}

\begin{remark}\label{r:useful}
We immediately point out the useful property that $M_\sigma$ is monotone, i.e., if $P'\subseteq P$ then $M_\sigma(P')\le M_\sigma(P)$. Indeed, given $r'\in \mathcal R(P')$ let $r:P\to [0, 1]$ be the function such that $r=r'$ on $P'$ and $r=0$ on $P\setminus P'$. We have $r\in \mathcal R(P)$ and $F_\sigma(P', r') = F_\sigma(P, r)$ and taking the supremum over all choices of $r'\in\mathcal R(P')$ we get $M_\sigma(P')\le M_\sigma(P)$.
Furthermore, for any $P\subseteq\R^2$, we have $M_\sigma(P)\ge 0$ (choosing $r$ constantly equal to $0$, so that $U(P, r)=\emptyset$) and $M_\sigma(P)\le M_{\sigma'}(P)$ whenever $\sigma<\sigma'$.
\end{remark}

\begin{remark}
    For our purposes, it would have been equivalent, and slightly more intuitive, to consider 
    \begin{equation*}
        F(P, r) := \frac{\sum_{p\in P}r(p)}
        {\min\{
            \diam(U(P, r)), \tfrac12 + R(P, r)
        \}}
    \end{equation*}
    instead of $F_\sigma$ and then $M(P):= \sup_{r\in\mathcal R(P)} F(P, r)$ instead of $M_\sigma$.
    Indeed, $F_\sigma(P,r )\ge 0$ if and only if $F(P, r)\ge \frac1{2\sigma}$ and we will care only about the sign of $F_\sigma$ (see, for example, the statement of \cref{thm:infinite-minmax}).
    The objective function $M_\sigma$ is to be preferred because
    \begin{itemize}
        \item The continuity properties of $F_\sigma$ and $M_\sigma$ are easier to establish (and the proofs are less cumbersome as we avoid denominators).
        \item As we will see in \cref{sec:linear-programming}, $M_\sigma$ can be written as a collection of linear programming problems. This remains true even for $M$, but the corresponding problems are more complicated and the derivation is more involved.
    \end{itemize}
\end{remark}

\subsection{The search space: the stable sets of points}
Next, we shall define the family of set of points $P\subseteq\R^2$ over which we will compute the infimum of $M_\sigma$ to obtain the sought $\min$-$\max$ problem.

\begin{definition}
    Fix $\sigma\in[\frac12, 1]$.
    Given $p\in\R^2$ and $0<r\le 1$, let $\Delta_\sigma(p, r)$ be the family of pairs $(q_1, q_2)\in \overline{B_r(p)} \times \overline{B_r(p)}$ such that $|q_1-q_2|\ge 2\sigma r$.
\end{definition}

\begin{figure}
\begin{tikzpicture}
\filldraw[black] (0,0) circle (1pt);
\node[left] at (0,0) {$p$};
\draw (0,0) circle [radius = 3];

\draw (0,0) [dashed] -- ({3*1.414/2}, {-3*1.414/2});
\node[right] at (1.1,-1) {$r$};

\filldraw[black] (2.2,1.414) circle (1pt);
\filldraw[black] (-1.8,1.414) circle (1pt);

\node[below] at (-1.8,1.414) {$q_1$};
\node[below] at (2.2,1.414) {$q_2$};
\draw (2.2,1.414) -- (-1.8,1.414);
\node[above] at (0,1.414) {$\geq 2 \sigma r$};
\end{tikzpicture}
\caption{A pair $(q_1, q_2)$ belonging to $\Delta_\sigma(p, r)$. Observe that $q_1, q_2$ are inside $\overline{B_r(p)}$ and satisfy $|q_1-q_2|\geq 2\sigma r$.}\label{f:figli} 
\end{figure}
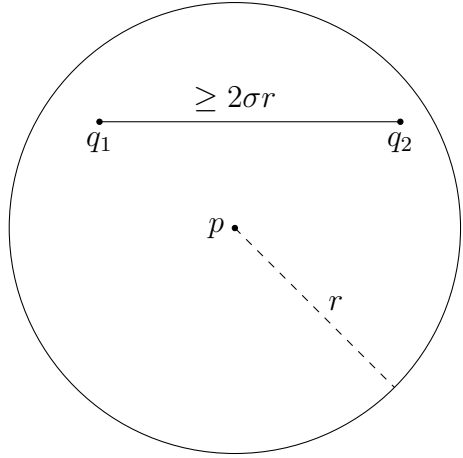

We are interested in sets that are \emph{closed} under the $\Delta_\sigma$ operations defined above.
\begin{definition}[stability]\label{def:stability}
    Fix $\sigma\in[\frac12, 1]$. For $L>1$, a set of points $P\subseteq\R^2$ is \emph{$(\sigma, L)$-stable} if the following statement holds:
\begin{itemize}
\item[(St)]    For any $p\in P \cap B_L(O)$ and any $r\in (L^{-1}, 1]$, there is a pair $(q_1, q_2)\in \Delta_\sigma(p, r)$ such that $q_1, q_2\in P$.
\end{itemize}
A set is called $(\sigma,\infty)$-stable if it is $(\sigma,L)$-stable for all $L>1$.
\end{definition}

A weaker notion which will play a particularly important role is the following.

\begin{definition}[one-stability]
    Fix $\sigma\in[\frac12, 1]$. For $L>1$, a set of points $P\subseteq\R^2$ is \emph{$(\sigma, L)$-one-stable} if the following statement holds:
\begin{itemize}
\item[(1-St)] For any $p\in P \cap B_L(O)$ there is a pair $(q_1, q_2)\in \Delta_\sigma(p, 1)$ such that $q_1, q_2\in P$.
\end{itemize}
As above a set is called $(\sigma,\infty)$-one-stable if it is $(\sigma,L)$-one-stable for all $L>1$.
\end{definition}

\subsection{The infinite dimensional min-max problem}
We are ready to state the infinite dimensional min-max problem that produces an upper bound for $\bar\sigma$. This is a central result in our work.

\begin{theorem}\label{thm:infinite-minmax}
    Fix $\sigma\in[\frac12,1]$. For $L>1$, define\footnote{Observe that $\InfminM_\sigma(L)$ is increasing in $L$ because being $(\sigma,L)$-stable is more restrictive when $L$ is larger. Therefore the limit $\InfminM_\sigma$ exists.}
    \begin{align*}
        \InfminM_{\sigma}(L) &:= \inf \{M_\sigma(P): P\text{ is $(\sigma, L)$-stable and $O\in P$}\}\, , \\
        \InfminM_\sigma &:= \lim_{L\to\infty}\InfminM_\sigma(L).    
    \end{align*}
    If $\InfminM_\sigma>0$, then $\bar\sigma \leq \sigma$.
\end{theorem}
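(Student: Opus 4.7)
The plan is to argue by contradiction. Suppose simultaneously $\InfminM_\sigma>0$ and $\bar\sigma>\sigma$: then there is a Borel set $E\subset\R^2$ with $\mathcal{H}^1(E)<\infty$, $\Theta^1_*(E,x)\ge \sigma$ for $\mathcal{H}^1$-a.e.\ $x\in E$, and $E$ not countably $1$-rectifiable. The Besicovitch--Federer structure theorem lets me replace $E$ by its purely unrectifiable part, which still has positive $\mathcal{H}^1$-measure and retains the lower density bound; I will also use the standard fact that $\Theta^{*,1}(E,x)\le 1$ at $\mathcal{H}^1$-a.e.\ $x\in E$ for $1$-sets of finite measure.

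From this $E$ I would construct, for every large $L>1$, a set $P_L\subset\R^2$ with $O\in P_L$ that is $(\sigma,L)$-stable and for which $M_\sigma(P_L)\to 0$ as $L\to\infty$, contradicting $\InfminM_\sigma>0$. The natural candidate is a blow-up of $E$ at a typical point: pick $x_0\in E$ where both density bounds hold and, crucially, where they hold with quantitative uniformity over a full-$\mathcal{H}^1$-measure set of nearby points (via a Lebesgue/Egorov-type argument applied to the density functions together with the set of points where the convergence rates are prescribed). Then, for scales $\rho_n\downarrow 0$ chosen in dependence on $L$, take $P_L$ to be a subsequential Kuratowski limit on compact subsets of the rescalings $\rho_n^{-1}(E-x_0)$.

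The principal obstacle is inheriting $(\sigma,L)$-stability: for $p\in P_L\cap B_L(O)$, approximated by $x_n\approx x_0+\rho_n p\in E$, and $r\in(L^{-1},1]$, one needs two points of $E$ inside $\overline{B_{\rho_n r}(x_n)}$ at mutual distance at least $2\sigma\rho_n r$. The lower density supplies only the mass bound $\mathcal{H}^1(E\cap\overline{B_{\rho_n r}(x_n)})\ge 2\sigma\rho_n r(1-o(1))$, and converting this into a diameter bound of magnitude $2\sigma\rho_n r$ is delicate, because a naive combination with $\Theta^{*,1}\le 1$ yields diameter only of order $\sigma\rho_n r$. The most natural route is to realize $P_L$ as the support of a tangent measure $\mu$ of $\mathcal{H}^1\res E$ at $x_0$ in the sense of Preiss, so that both density inequalities propagate everywhere on the support, and then use the resulting rigidity to rule out configurations where $E\cap\overline{B_{\rho_n r}(x_n)}$ concentrates in a ball of diameter strictly less than $2\sigma\rho_n r$.

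To then force $M_\sigma(P_L)\to 0$, take any $r_*\in\mathcal{R}(P_L)$ with $F_\sigma(P_L,r_*)\ge\delta>0$ and lift the disjoint balls $\{B_{r_*(p)}(p)\}$ into $E$ at scale $\rho_n$, obtaining disjoint balls contained in $E\cap B_{\rho_n R(P_L,r_*)}(x_0)$. The lower density produces total $\mathcal{H}^1$-content at least $2\sigma\rho_n\sum_p r_*(p)\ge \rho_n\min\{\diam U,\tfrac{1}{2}+R\}+2\sigma\rho_n\delta(1-o(1))$, while the upper density caps this total at $2\rho_n R(1+o(1))$. In the regime where $\min\{\diam U,\tfrac12+R\}=\tfrac12+R$ with $R<\tfrac12+2\sigma\delta$ this is an immediate incompatibility; in the other regimes one would iterate the argument inside each lifted ball, using the scale-invariance of $P_L$ as a tangent set, to amplify the additive excess $2\sigma\delta$ into a multiplicative gain that, after finitely many iterations, forces $\mathcal{H}^1(E)=\infty$. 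The asymmetric $\tfrac12+R$ inside the $\min$ and the truncation $r\le 1$ in $\mathcal{R}(P)$ are precisely what make both the stability step and this contradiction argument feasible.
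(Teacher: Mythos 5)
There is a genuine gap, and it sits exactly where your proposal is vaguest. The paper does not blow up $E$ directly; it routes everything through the Preiss--Ti\v{s}er \emph{Besicovitch pair condition}: one assumes the (compact) pair condition fails for a normalized measure $\mu$ with $\mu(S)\le\diam(S)$ for \emph{all} sets $S$ and for two compact pieces $E_1,E_2$ at distance $1$. That failure is what simultaneously delivers the three facts you cannot produce. First, it forces $\mu(B_{L+2}(O)\setminus(E_1\cup E_2))\le\eps$, whence $\mu(B_r(x)\cap E_1)>2\sigma r-\eps$ for $x\in E_1$; combined with $\diam(S)\ge\mu(S)$ this gives $\diam(E_1\cap B_r(x))\ge 2\sigma r-\eps$, i.e.\ $(\sigma',L)$-stability of $E_1$ with no loss of the factor $2$. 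Your worry about getting only $\sigma r$ is resolved not by tangent-measure rigidity but by the standard normalization $\mathcal{H}^1(E'\cap S)\le(1+\eps)\diam(S)$ for \emph{arbitrary} sets (not just balls) together with the near-full local mass of $E_1$; your proposed appeal to Preiss tangent measures and ``rigidity'' is not an argument and would not by itself propagate the density bounds to every point of the support at every scale $r\in(L^{-1},1]$. Second, and more fatally, the term $\tfrac12+R$ in $F_\sigma$ has no source in your setup: the paper obtains $\mu(B_R(O)\cap E_1)\le R+\tfrac12$ from \cref{l:furbata} applied to the \emph{pair} $E_1,E_2$ with $\dist(E_1,E_2)=1$. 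Upper density alone caps the mass of $B_{\rho_n R}(x_0)$ by $2\rho_n R$, which is strictly weaker than $\rho_n(R+\tfrac12)$ precisely when $R>\tfrac12$, so your contradiction argument fails in exactly the regime where the $\tfrac12+R$ branch of the $\min$ is active. The ``iterate inside each lifted ball to amplify the additive excess'' step is the place where this is swept under the rug; nothing in the proposal indicates how such an iteration would close, and the scale-invariance of a tangent set does not supply the missing furbata-type bound.

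Beyond these two points, the overall architecture also differs: the paper never argues directly from a counterexample set $E$, but instead proves that $\InfminM_{\sigma''}(L')>0$ implies the compact pair condition (Lemma \ref{lem:ubertechno}), then invokes \cite{PT} (\cref{prop:pt_pair_condition}, via \cref{prop:compact_pair_condition}) to conclude $\bar\sigma\le\sigma$. The decomposition of $E$ into two nearby pieces $E_1,E_2$ --- which is where pure unrectifiability is actually used --- is entirely outsourced to the cited Preiss--Ti\v{s}er proposition; your proposal omits this decomposition, and with it both the stability of the limiting configuration and the smallness of $M_\sigma$ become unprovable as stated. The final contradiction in the paper is also much more elementary than yours: one extracts a finite $(\sigma'',L')$-stable subset $P\subseteq E_1$ of bounded cardinality (\cref{lem:sigma-stable-is-finite}) with $M_{\sigma''}(P)\le\frac{\eps}{2\sigma}|P|$, directly contradicting $\InfminM_{\sigma''}(L')>0$, with no need to force $\mathcal{H}^1(E)=\infty$ by iteration.
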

We do not know if this statement is robust enough to prove \cref{c:Bes} (i.e., $\bar\sigma = \frac12$). More precisely, we do not know of a $\sigma>\frac12$ so that $\InfminM_{\sigma}=0$.

As an immediate corollary of \cref{thm:infinite-minmax} we have
\begin{corollary}\label{cor:infinite-minmax-one}
    Fix $\sigma\in[\frac12,1]$. For $L>1$, define\footnote{Observe that $\InfoneminM_\sigma(L)$ is increasing in $L$ because being $(\sigma,L)$-one-stable is more restrictive when $L$ is larger. Therefore the limit $\InfoneminM_\sigma$ exists.}
    \begin{align*}
        \InfoneminM_\sigma(L) & := \inf \{M_\sigma(P): P\text{ is $(\sigma, L)$-one-stable and $O\in P$}\}\,, \\
        \InfoneminM_\sigma &:= \lim_{L\to\infty} \InfoneminM_\sigma(L).
    \end{align*}
    If $ \InfoneminM_\sigma > 0$, then $\bar\sigma\le \sigma$.
\end{corollary}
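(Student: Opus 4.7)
The plan is to derive this as a direct monotonicity consequence of the definitions, by showing that $(\sigma,L)$-stability is strictly stronger than $(\sigma,L)$-one-stability, and then appeal to \cref{thm:infinite-minmax}.

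First, I would observe the elementary inclusion at the level of the search spaces. If $P\subseteq\R^2$ is $(\sigma,L)$-stable, then the defining property (St) holds for every $r\in(L^{-1},1]$; specializing to the single value $r=1$ (which is admissible since $L>1$) yields exactly condition (1-St). Hence every $(\sigma,L)$-stable set containing $O$ is also $(\sigma,L)$-one-stable and contains $O$, so
\begin{equation*}
\{P: P \text{ is $(\sigma,L)$-one-stable and }O\in P\} \supseteq \{P: P \text{ is $(\sigma,L)$-stable and }O\in P\}.
\end{equation*}

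Taking the infimum of $M_\sigma$ over the larger family gives a smaller (or equal) value, which is precisely
\begin{equation*}
\InfoneminM_\sigma(L) \leq \InfminM_\sigma(L)\qquad\text{for every }L>1.
\end{equation*}
Passing to the limit $L\to\infty$ (both limits exist by the monotonicity footnoted in the statements), we conclude $\InfoneminM_\sigma \leq \InfminM_\sigma$. Therefore the assumption $\InfoneminM_\sigma>0$ forces $\InfminM_\sigma>0$, and \cref{thm:infinite-minmax} then delivers $\bar\sigma\leq\sigma$.

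Since the argument reduces entirely to the set-theoretic inclusion above and to a direct appeal to \cref{thm:infinite-minmax}, there is no real obstacle: the only point worth double-checking is the admissibility of $r=1$ in the stability condition, which is automatic from the requirement $L>1$ and hence $L^{-1}<1$. This is why the statement is presented as an immediate corollary rather than as a separate theorem.
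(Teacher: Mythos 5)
Your proof is correct and is essentially identical to the paper's: both derive $\InfoneminM_\sigma \le \InfminM_\sigma$ from the fact that every $(\sigma,L)$-stable set is $(\sigma,L)$-one-stable (so the infimum runs over a larger family) and then invoke \cref{thm:infinite-minmax}. The only difference is that you spell out the specialization of (St) to $r=1$, which the paper leaves implicit.
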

\begin{proof}
    Observe that any $(\sigma, L)$-stable set is also $(\sigma, L)$-one-stable. Therefore in this statement we are computing the infimum over a larger family of sets of points than in \cref{thm:infinite-minmax}, which implies $\InfminM_\sigma \geq \InfoneminM_\sigma$. Hence the conclusion follows from \cref{thm:infinite-minmax}.
\end{proof}

\begin{remark}\label{r:mandorla} In fact the proof of \cref{thm:infinite-minmax} gives a slightly stronger statement. We can fix an arbitrary point $P_\star\in \partial B_1 (O)$ and define
\begin{align*}
\overline{\InfminM}_\sigma (L) &:= \inf \{M_\sigma(P): O\in P\text{ is $(\sigma, L)$-stable and } B_1 (P_\star)\cap P=\emptyset\}\\
\overline{\InfminM}_\sigma &:= \lim_{L\to \infty} \overline{\InfminM}_\sigma (L)\, .
\end{align*}
Obviously $\overline{\InfminM}_\sigma \geq \InfminM_\sigma$, nonetheless we still have that $\overline{\InfminM}_\sigma>0$ implies $\bar\sigma \leq \sigma$. Likewise we can introduce $\overline{\InfoneminM}_\sigma$, the corresponding counterpart of $\InfoneminM_\sigma$.

It does not seem that this additional constraint can be used effectively, while it would make several aspects much more technical. For the latter reason we will mostly ignore it (see \cref{r:mandorla2} for a detailed explanation): we will only keep track of it in our examples, because we can show that they comply as well with this additional restriction, cf. \cref{r:mandorla3,r:mandorla4}.
\end{remark}

\subsection{A hierarchy of finite dimensional min-max problems}
The goal of this section is to define a sequence of $\min$-$\max$ problems providing a discrete approximation of $\InfoneminM_\sigma$.

Let us start by defining a sequence of families $(\mathcal F_\sigma(k))_{k\in \N_0}$ of finite sets of points that are a discrete analogue of one-stable sets.

Fix $\sigma\in[\frac12,1]$. 
Define
\begin{equation}\label{eq:F-sigma-k}\begin{aligned}
    \mathcal F_\sigma(0) &:= \{\{O\}\}, \\
    \mathcal F_\sigma(k+1) &:= 
    \Big\{
        P \cup \bigcup_{p\in P} \{q_{p,1}, q_{p,2}\}:\,
        P\in \mathcal F_\sigma(k),\, (q_{p,1}, q_{p,2})\in \Delta_\sigma(p, 1)\text{ for all $p\in P$}
    \Big\}.
\end{aligned}\end{equation}

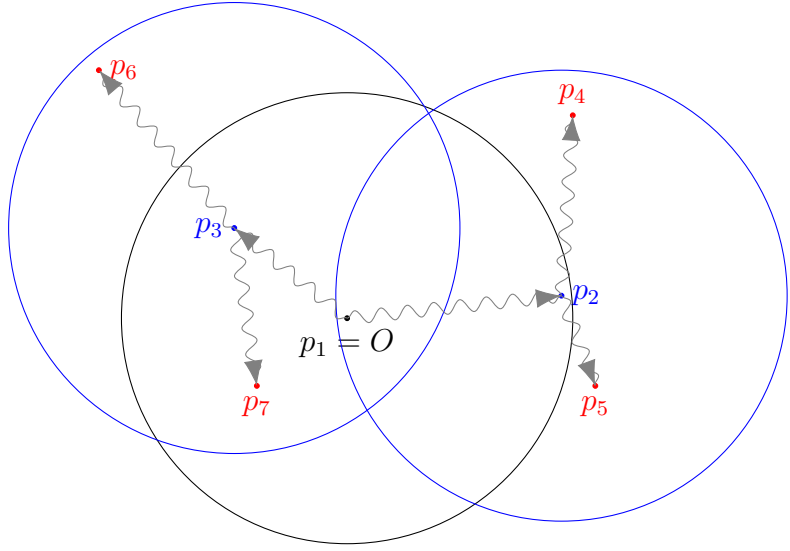
\begin{figure}
\begin{tikzpicture}[scale=3,decoration=snake]
\coordinate (P1) at (0,0);
\coordinate (P2) at (0.95,0.1);
\coordinate (P3) at (-0.5,0.4);
\coordinate (P4) at (1, 0.9);
\coordinate (P5) at (1.1,-0.3);
\coordinate (P6) at (-1.1,1.1);
\coordinate (P7) at (-0.4,-0.3);

\filldraw[black] (P1) circle (0.3pt);
\filldraw[blue] (P2) circle (0.3pt);
\filldraw[blue] (P3) circle (0.3pt);
\filldraw[red] (P4) circle (0.3pt);
\filldraw[red] (P5) circle (0.3pt);
\filldraw[red] (P6) circle (0.3pt);
\filldraw[red] (P7) circle (0.3pt);

\node[below] at (P1) {$p_1=O$};
\node[right,color=blue] at (P2) {$p_2$};
\node[left,color=blue] at (P3) {$p_3$};
\node[above,color=red] at (P4) {$p_4$};
\node[below,color=red] at (P5) {$p_5$};
\node[right,color=red] at (P6) {$p_6$};
\node[below,color=red] at (P7) {$p_7$};

\draw (P1) circle [radius=1];
\draw[color=blue] (P2) circle [radius=1];
\draw[color=blue] (P3) circle [radius=1];

\draw[-{Latex[scale=2]},decorate,color=gray] (P1) -- (P2);
\draw[-{Latex[scale=2]},decorate,color=gray] (P1) -- (P3);
\draw[-{Latex[scale=2]},decorate,color=gray] (P2) -- (P4);
\draw[-{Latex[scale=2]},decorate,color=gray] (P2) -- (P5);
\draw[-{Latex[scale=2]},decorate,color=gray] (P3) -- (P6);
\draw[-{Latex[scale=2]},decorate,color=gray] (P3) -- (P7);
\end{tikzpicture}
\caption{An example of a $7$-points set $P$ belonging to $\mathcal F_\sigma(2)$. 
The set $\{p_1=O\}$ belongs to $\mathcal F_\sigma(0)$. The set $\{p_1, p_2, p_3\}$ belongs to $\mathcal F_\sigma(1)$. The set $\{p_1, p_2, p_3, p_4, p_5, p_6, p_7\}$ belongs to $\mathcal F_\sigma(2)$. In particular, $(p_2, p_3)\in\Delta_\sigma(p_1, 1)$, $(p_4,p_5)\in\Delta_\sigma(p_2, 1)$, and $(p_6, p_7)\in\Delta_\sigma(p_3, 1)$.
The three circles are the unit circles centered at $p_1, p_2, p_3$. The points satisfy $|p_2-p_3|\ge 2\sigma$, $|p_4-p_5|\ge 2\sigma$, and $|p_6-p_7|\ge 2\sigma$.
}
\label{fig:generations}
\end{figure}

One-stable sets are related to $(\mathcal F_\sigma(k))_{k\in \N_0}$ thanks to the following observation.
\begin{lemma}\label{lem:stable-has-finite-subset}
    Fix $\sigma\in[\frac12,1]$.
    Given $L>0$, let $P\subseteq\R^2$ be a $(\sigma, L)$-one-stable set that contains the origin $O$. 
    For any integer $0\le k\le L$, there is $P_k\subseteq P$ such that $P_k\in\mathcal F_\sigma(k)$.
\end{lemma}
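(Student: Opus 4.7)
The plan is to proceed by induction on $k$, constructing an increasing chain $P_0\subseteq P_1\subseteq\cdots\subseteq P_k$ with each $P_j\in\mathcal F_\sigma(j)$ contained in $P$. The base case $k=0$ is immediate: set $P_0:=\{O\}$, which lies in $P$ by hypothesis and is the unique element of $\mathcal F_\sigma(0)$.

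For the inductive step, suppose $P_j\subseteq P$ with $P_j\in\mathcal F_\sigma(j)$ has already been constructed for some integer $j$ with $j+1\le L$. The recursive definition \eqref{eq:F-sigma-k} says that to produce an element of $\mathcal F_\sigma(j+1)$ from $P_j$ one must select, for each $p\in P_j$, a pair $(q_{p,1},q_{p,2})\in\Delta_\sigma(p,1)$ and adjoin these points to $P_j$. The one-stability hypothesis supplies such a pair inside $P$, but only for those $p\in P_j$ which belong to $B_L(O)$; the whole induction therefore hinges on checking that $P_j\subseteq B_L(O)$ whenever $j\le L-1$.

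The needed control is ensured by an auxiliary and elementary estimate carried along the same induction, namely
\[
P_j\subseteq\overline{B_j(O)}.
\]
This holds because at each generation the only new points adjoined are of the form $q_{p,i}\in\overline{B_1(p)}$, hence at distance at most one from a point already in $P_{j-1}$; starting from $P_0=\{O\}$, a telescoping argument yields the radius bound. Combined with the hypothesis $j\le L-1$, this gives $|p|\le j\le L-1<L$ for every $p\in P_j$, so one-stability applies at every point of $P_j$ and the inductive step goes through, producing
\[
P_{j+1}:=P_j\cup\bigcup_{p\in P_j}\{q_{p,1},q_{p,2}\}\subseteq P,
\]
which lies in $\mathcal F_\sigma(j+1)$ directly by \eqref{eq:F-sigma-k}.

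No real obstacle is expected in the execution; the only delicate point is the distance estimate above, which quantifies how fast the construction can propagate away from the origin and explains exactly why the bound $k\le L$ in the hypothesis is what is required to iterate $k$ generations before running out of room inside $B_L(O)$.
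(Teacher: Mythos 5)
Your proof is correct and follows essentially the same route as the paper: induction on $k$, with the key observation that $P_j$ is contained in a ball of radius $j$ about the origin (so that one-stability can be invoked at every point of $P_j$ when $j\le L-1$), and then adjoining the pairs supplied by one-stability to pass to generation $j+1$. Your use of the closed ball $\overline{B_j(O)}$ is in fact slightly more precise than the paper's $B_j(O)$, since the pairs in $\Delta_\sigma(p,1)$ live in the closed unit ball about $p$; either version suffices for the inductive step.
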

\begin{proof}
    We prove the statement by induction over $k\leq L$. 
    For $k=0$, we choose $P_0=\{O\}$.

    Now, assume that some fixed integer $k\leq L$ the statement of the lemma holds. If $k> L-1$ we are finished. If $k\leq L-1$ we want to show that the statement holds for $k+1$ as well. Hence fix a $P_k$ which fulfills the requirements of the lemma.
    Observe that, by definition of $\mathcal F_\sigma(k)$, we have $P_k\subseteq B_{k}(O)\subseteq B_{L-1}(O)$.
    In order to produce $P_{k+1}$ proceed as follows.
    For each $p\in P_k$, since $p\in P\cap B_L(O)$, there exists a pair of points $(q_{p,1}, q_{p,2})\in\Delta_\sigma(p, 1)$ such that $\{q_{p,1}, q_{p,2}\}\subseteq P$. Define
    \begin{equation*}
        P_{k+1} := P_k \cup \bigcup_{p\in P_k} \{q_{p,1}, q_{p,2}\}.
    \end{equation*}
    By definition of $\mathcal F_\sigma(k+1)$, we have $P_{k+1}\in\mathcal F_\sigma(k+1)$ as desired.
\end{proof}

As a consequence of the latter lemma, we deduce the following relaxation of \cref{cor:infinite-minmax-one}.
\begin{corollary}\label{cor:finite-minmax}
    Fix $\sigma\in[\frac12,1]$. 
    For any $k\in \N_0$, if
    \begin{equation*}
        k\oneminM_\sigma := \inf_{P\in\mathcal F_\sigma(k)} M_\sigma(P) > 0
    \end{equation*}
    then $\bar\sigma \leq \sigma$.
\end{corollary}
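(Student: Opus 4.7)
The plan is to deduce $\InfoneminM_\sigma \ge k\oneminM_\sigma$ from the finite-dimensional hypothesis and then invoke \cref{cor:infinite-minmax-one}. The two necessary ingredients are already in place: \cref{lem:stable-has-finite-subset} extracts a ``descendant tree'' of depth $k$ inside any sufficiently large one-stable set, and the monotonicity observation in \cref{r:useful} controls $M_\sigma$ under inclusion.

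Concretely, I would fix an integer $L \ge k$ together with an arbitrary $(\sigma, L)$-one-stable set $P$ containing $O$. By \cref{lem:stable-has-finite-subset}, there exists $P_k \subseteq P$ with $P_k \in \mathcal F_\sigma(k)$. The monotonicity of $M_\sigma$ under set inclusion (\cref{r:useful}) together with the definition of $k\oneminM_\sigma$ as an infimum over $\mathcal F_\sigma(k)$ then gives
\[
M_\sigma(P) \;\ge\; M_\sigma(P_k) \;\ge\; k\oneminM_\sigma.
\]

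Taking the infimum of the left-hand side over all admissible $P$ yields $\InfoneminM_\sigma(L) \ge k\oneminM_\sigma$ for every $L \ge k$. Passing to the limit $L \to \infty$ produces $\InfoneminM_\sigma \ge k\oneminM_\sigma > 0$, and \cref{cor:infinite-minmax-one} then concludes that $\bar\sigma \le \sigma$.

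There is essentially no obstacle; this corollary is a packaging statement once \cref{lem:stable-has-finite-subset} and \cref{r:useful} are in hand. The only minor subtlety is that the descendant subset produced by the lemma is only guaranteed when the ``depth budget'' $L$ is at least $k$, which is precisely why we use $\InfoneminM_\sigma = \lim_{L\to\infty} \InfoneminM_\sigma(L)$ rather than attempting to work at a fixed finite scale.
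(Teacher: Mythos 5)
Your proposal is correct and follows essentially the same route as the paper: extract $P_k\in\mathcal F_\sigma(k)$ from a one-stable set via \cref{lem:stable-has-finite-subset}, use the monotonicity of $M_\sigma$ from \cref{r:useful}, and conclude with \cref{cor:infinite-minmax-one}. The only cosmetic difference is that the paper works directly at scale $L=k$ and uses the monotonicity $\InfoneminM_\sigma(k)\le\InfoneminM_\sigma$, whereas you let $L\to\infty$; both are equally valid.
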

\begin{proof}
    For any $(\sigma, k)$-one-stable $P\subseteq\R^2$ with $O\in P$, thanks to \cref{lem:stable-has-finite-subset} we can find $P_k\subseteq P$ such that $P_k\in \mathcal F_\sigma(k)$. In particular, since $M_\sigma$ is increasing over its argument, we have $M_\sigma(P_k)\le M_\sigma(P)$. Therefore, we deduce
    \begin{equation*}
        \inf_{P\in\mathcal F_\sigma(k)} M_\sigma(P) 
        \le 
        \inf_{O\in P\text{ is $(\sigma, k)$-one-stable}}
            M_\sigma(P) = \InfoneminM_\sigma(k) \leq \InfoneminM_\sigma\, .
    \end{equation*}
    Hence $\bar\sigma\leq \sigma$ follows from \cref{cor:infinite-minmax-one}.
\end{proof}

Let us conclude with a statement collecting many simple properties of the various $\min$-$\max$ problems.
\begin{lemma}\label{lem:easy-properties-minmax}
    The following inequalities hold.
    \begin{itemize}
        \item For $\tfrac 12\le \sigma \le 1$ and $1<L$,
        \begin{equation*}
            \InfminM_{\sigma}(L) \ge 
            \InfoneminM_{\sigma}(L),
        \end{equation*}
        and both values are nondecreasing in $\sigma$ and $L$.
        \item For $\tfrac 12\le\sigma\le 1$,
        \begin{equation*}
            \InfminM_{\sigma} \ge 
            \InfoneminM_{\sigma},
        \end{equation*}
        and both values are nondecreasing in $\sigma$.
        \item For $\tfrac 12\le \sigma\le 1$ and $k\in\N_0$,
        \begin{equation*}
            k\oneminM_\sigma 
            \le 
            \InfoneminM_{\sigma}(k)\le \InfoneminM_\sigma,
        \end{equation*}
        and the left-hand side is nondecreasing in $\sigma$ and $k$.
    \end{itemize}
\end{lemma}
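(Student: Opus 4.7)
The plan is to deduce every inequality in the lemma from elementary monotonicity properties of the ingredients (the sets $\Delta_\sigma(p,r)$, the stability families, and the objective $M_\sigma$) already established in the preceding pages. I would organize the proof around three basic facts, all of them essentially tautological once the definitions are unwound: (i) for $\sigma\le \sigma'$ one has $\Delta_{\sigma'}(p,r)\subseteq\Delta_\sigma(p,r)$, so every $(\sigma',L)$-(one-)stable set is also $(\sigma,L)$-(one-)stable; (ii) every $(\sigma,L)$-stable set is $(\sigma,L)$-one-stable (just take $r=1$ in condition (St)); and (iii) for $L\le L'$, every $(\sigma,L')$-(one-)stable set is $(\sigma,L)$-(one-)stable. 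Combined with the property $M_\sigma(P)\le M_{\sigma'}(P)$ for $\sigma\le\sigma'$ from \cref{r:useful}, these facts cover almost everything.

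For the first bullet I would proceed as follows. Both $\InfminM_\sigma(L)$ and $\InfoneminM_\sigma(L)$ are infima of $M_\sigma$ over families of point sets, and by (ii) the family of $(\sigma,L)$-stable sets is contained in the family of $(\sigma,L)$-one-stable sets; the inequality $\InfminM_\sigma(L)\ge\InfoneminM_\sigma(L)$ is then the trivial observation that taking the infimum over a smaller family yields a larger value. Monotonicity in $\sigma$ is the composition of two effects that both go in the right direction: by (i), the admissible family shrinks as $\sigma$ grows, and by \cref{r:useful}, $M_\sigma$ itself grows with $\sigma$. Monotonicity in $L$ is immediate from (iii).

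The second bullet is obtained from the first by passing to the limit $L\to\infty$, which preserves nondecreasing sequences and the relation $\ge$; the monotonicity statements transfer verbatim to the limits.

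The third bullet is the only one that uses material beyond the three facts above. The inequality $k\oneminM_\sigma \le \InfoneminM_\sigma(k)$ is exactly the content of \cref{lem:stable-has-finite-subset} combined with the monotonicity of $M_\sigma$ under inclusion from \cref{r:useful}: any $(\sigma,k)$-one-stable $P$ containing $O$ admits a subset $P_k\in\mathcal F_\sigma(k)$ with $M_\sigma(P_k)\le M_\sigma(P)$, so taking inf on both sides yields the claim, and $\InfoneminM_\sigma(k)\le \InfoneminM_\sigma$ is built into the definition as a limit of a nondecreasing sequence. Monotonicity of $k\oneminM_\sigma$ in $\sigma$ follows from an easy induction on $k$ which, using (i), shows $\mathcal F_{\sigma'}(k)\subseteq \mathcal F_\sigma(k)$ whenever $\sigma\le \sigma'$; then one argues as in the first bullet, combining this inclusion with the inequality $M_\sigma\le M_{\sigma'}$. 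For monotonicity in $k$, I would observe directly from the recursive definition~\eqref{eq:F-sigma-k} that every $P'\in\mathcal F_\sigma(k+1)$ contains some $P\in\mathcal F_\sigma(k)$, so \cref{r:useful} yields $M_\sigma(P')\ge M_\sigma(P)\ge k\oneminM_\sigma$, and taking the infimum over $P'\in\mathcal F_\sigma(k+1)$ gives $(k+1)\oneminM_\sigma\ge k\oneminM_\sigma$. There is no genuine obstacle in the argument; the only care required is in keeping track of the two opposing effects (the family of admissible sets shrinks with $\sigma$, while the value of $M_\sigma$ grows with $\sigma$) and verifying that they combine to give the asserted direction of monotonicity.
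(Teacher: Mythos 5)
Your proposal is correct and follows essentially the same route as the paper: the two main inequalities are exactly the family-inclusion arguments already carried out in the proofs of \cref{cor:infinite-minmax-one} and \cref{cor:finite-minmax}, and the monotonicity claims are unwound from the definitions via the inclusions $\Delta_{\sigma'}\subseteq\Delta_\sigma$, the shrinking of the stable families in $\sigma$ and $L$, and the monotonicity of $M_\sigma$ from \cref{r:useful}. The paper merely states these monotonicities as ``direct consequences of the definitions,'' so your write-up is just a more explicit version of the same argument.
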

\begin{proof}
    The first inequality is shown in the proof of \cref{cor:infinite-minmax-one}. The second inequality follows from the first one by passing to the limit as $L\to\infty$. The third inequality is shown in the proof of \cref{cor:finite-minmax}.
    The claimed monotonicities are direct consequences of the definitions of the various $\min$-$\max$ problems.
\end{proof}

\section{Main results about the min-max problems in \texorpdfstring{$\mathbb R^2$}{R2}}

As $k$ goes to infinity the finite dimensional problems $k\oneminM_\sigma$ recover $\InfoneminM_\sigma$. 

\begin{theorem}\label{thm:convergence-finite-minmax}
    For any $\sigma\in[\tfrac12, 1]$ 
    \begin{equation*}
        \lim_{\sigma'\uparrow\sigma} \InfoneminM_{\sigma'} 
        \le 
        \lim_{k\to \infty}k\oneminM_\sigma 
        \le 
        \InfoneminM_\sigma.
    \end{equation*}
\end{theorem}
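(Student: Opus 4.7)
The plan is to prove the two inequalities separately. The upper one $\lim_k k\oneminM_\sigma\le\InfoneminM_\sigma$ should be immediate from the third item of \cref{lem:easy-properties-minmax} by passing to the limit in $k$. For the lower inequality $\lim_{\sigma'\uparrow\sigma}\InfoneminM_{\sigma'}\le\lim_k k\oneminM_\sigma$, I would fix $\sigma'<\sigma$ and construct, from near-minimizers of the finite-dimensional problems, a $(\sigma',\infty)$-one-stable set $P^\infty\subseteq\R^2$ containing $O$ with $M_{\sigma'}(P^\infty)\le \lim_k k\oneminM_\sigma$; letting $\sigma'\uparrow\sigma$ then yields the claim by monotonicity of $\sigma'\mapsto\InfoneminM_{\sigma'}$. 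For each $k$ I pick $P_k\in\mathcal F_\sigma(k)$ with $M_\sigma(P_k)\le k\oneminM_\sigma+2^{-k}$; by construction, each $P_k$ is the vertex set of a rooted binary tree of depth $k$ with root $O$ in which every parent--child pair lies in $\Delta_\sigma(\text{parent},1)$, so a depth-$j$ vertex of $P_k$ sits in $\overline{B_j(O)}$.

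Since for fixed $j$ the depth-$j$ truncations $P_k^{(j)}\subseteq P_k$ are finite subsets of the compact set $\overline{B_j(O)}$, a diagonal extraction in $k$ should produce a subsequence along which, for every $j$ simultaneously, $P_k^{(j)}$ converges in Hausdorff distance to a finite set $P_\infty^{(j)}$. The closedness of the relation ``$(q_1,q_2)\in\Delta_\sigma(p,1)$'' guarantees that the tree structure passes to the limit: each vertex of $P_\infty^{(j)}$ still admits a $\Delta_\sigma$ pair inside $P_\infty^{(j+1)}$. Therefore $P^\infty:=\bigcup_{j\ge 0}P_\infty^{(j)}$ is a set containing $O$ which is $(\sigma,\infty)$-one-stable, hence $(\sigma',\infty)$-one-stable by the inclusion $\Delta_\sigma(p,1)\subseteq\Delta_{\sigma'}(p,1)$; in particular $\InfoneminM_{\sigma'}(L)\le M_{\sigma'}(P^\infty)$ for every $L>1$.

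To bound $M_{\sigma'}(P^\infty)$, I would first observe that every $r\in\mathcal R(P^\infty)$ has finite support and thus lies in $\mathcal R(P_\infty^{(j)})$ for sufficiently large $j$, giving $M_{\sigma'}(P^\infty)=\sup_j M_{\sigma'}(P_\infty^{(j)})\le\sup_j M_\sigma(P_\infty^{(j)})$ (the second inequality uses \cref{r:useful}). The main analytic step is then the lower-semicontinuity estimate $M_\sigma(P_\infty^{(j)})\le\liminf_k M_\sigma(P_k^{(j)})$: given a near-optimal $r^*\in\mathcal R(P_\infty^{(j)})$, I would rescale it by $1-\delta$ so all balls become strictly disjoint, match each point of its finite support to a close-by point of $P_k^{(j)}$, and transfer the radii; the explicit formulas \eqref{eq:RU-formulas} then ensure $F_\sigma(P_k^{(j)}, r_k)\to F_\sigma(P_\infty^{(j)},(1-\delta)r^*)$, and letting $\delta\downarrow 0$ closes the argument. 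Combined with $M_\sigma(P_k^{(j)})\le M_\sigma(P_k)\le k\oneminM_\sigma+2^{-k}$, this yields $M_\sigma(P_\infty^{(j)})\le \lim_k k\oneminM_\sigma$ for every $j$, and hence $M_{\sigma'}(P^\infty)\le \lim_k k\oneminM_\sigma$ as desired. The main obstacle is precisely this lower-semicontinuity step: because the admissibility class $\mathcal R(P)$ is $P$-dependent through a nontrivial disjointness constraint on the balls, one cannot simply transplant optimal radii across the Hausdorff convergence, and the slack $\sigma'<\sigma$ in the statement provides a comfortable margin for the quantitative bookkeeping of the lifting procedure.
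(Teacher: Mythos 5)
Your proposal is correct, but it follows a genuinely different route from the paper's. The paper proves the lower inequality without ever passing to a limit configuration: \cref{lem:finite-has-stable-subset} extracts, from each \emph{single} $P\in\mathcal F_\sigma(k)$ with $k$ large, a subset $P'$ whose rescaling $\tfrac1r P'$ is $(\sigma',L)$-one-stable, via a pigeonhole/stopping-time argument; one then compares $M_{\sigma'}(\tfrac1r P')$ with $M_\sigma(P)$ at the cost of an error $CL^2(r-1)$ and sends $k\uparrow\infty$, $r\downarrow1$, $L\uparrow\infty$, $\sigma'\uparrow\sigma$. The relaxations $\sigma'<\sigma$ and $r>1$ are forced there because the extraction needs an \emph{open} enlargement of $\Delta_\sigma(p,1)$ (via \cref{lem:continuity-Delta}) to terminate. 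Your compactness route avoids the extraction lemma entirely, and --- since the relation $(q_1,q_2)\in\Delta_\sigma(p,1)$ is \emph{closed}, not open --- it needs no slack in $\sigma$ at all: carried out as you describe, it yields the stronger conclusion $\lim_k k\oneminM_\sigma=\InfoneminM_\sigma$, with the left limit $\sigma'\uparrow\sigma$ entering only as an unnecessary safety margin. Two points deserve care in the write-up. First, extract the subsequence at the level of \emph{labeled} configurations (tuples indexed by the generative tree, of cardinality at most $3^j$ at depth $j$), not merely in Hausdorff distance: labels may merge in the limit, so $P_\infty^{(j+1)}$ need not itself lie in $\mathcal F_\sigma(j+1)$, but every limit point still inherits at least one admissible pair of children, which is all that one-stability of $P^\infty=\bigcup_j P_\infty^{(j)}$ requires. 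Second, the lower-semicontinuity step is lighter than you fear: rather than rescaling by $1-\delta$, transplant a near-optimal $r^*$ by sending each $p$ in its (finite) support to a distinct approximant $p_k$ and setting $r_k(p_k):=\max\{0,r^*(p)-|p-p_k|\}$; this is exactly the computation in the proof of \cref{lem:M-lip} (the new balls are contained in the old ones, so admissibility, $R$ and $\diam U$ are all controlled), and it gives $F_\sigma(P_k^{(j)},r_k)\ge F_\sigma(P_\infty^{(j)},r^*)-\sum_p|p-p_k|$ directly. What each approach buys: yours is shorter and sharper in $\R^2$ (or any boundedly compact ambient space); the paper's is purely finitary and quantitative, which is in the spirit of its metric-space generalizations where such compactness is unavailable.
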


We show the following bounds about the first three generations of the finite dimensional $\min$-$\max$ problems. 

\begin{theorem}[Besicovitch]\label{thm:0.75}
    Set $\sigma_B := \frac34 = 0.75$.
    For $\sigma\in[\tfrac12,1]$,
    \begin{equation*}
        0\oneminM_\sigma > 0 \quad\text{if and only if}\quad \sigma>\sigma_B.
    \end{equation*}
    Therefore (recall \cref{cor:finite-minmax}) $\bar\sigma \le\sigma_B$.
\end{theorem}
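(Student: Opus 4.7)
The plan is to exploit the triviality of the base case $\mathcal F_\sigma(0)$. By definition $\mathcal F_\sigma(0) = \{\{O\}\}$ contains a single element, namely the singleton $\{O\}$, so
\[
0\oneminM_\sigma = M_\sigma(\{O\}) = \max_{r\in\mathcal R(\{O\})} F_\sigma(\{O\}, r).
\]
Any $r\in\mathcal R(\{O\})$ is determined by the single number $t:=r(O)\in[0,1]$, so the problem reduces to a one-variable optimization. For $t>0$ one has $U(\{O\},r)=B_t(O)$, hence $\diam U(\{O\},r)=2t$ and $R(\{O\},r)=t$, while $t=0$ gives $F_\sigma=0$.

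First I would split into the two regimes determined by which argument of the min realizes it. When $2t\le\tfrac12+t$, i.e. $t\le \tfrac12$, one computes
\[
F_\sigma(\{O\},r)=t-\frac{1}{2\sigma}\cdot 2t=t\cdot\frac{\sigma-1}{\sigma}\le 0,
\]
since $\sigma\le 1$. When $t>\tfrac12$, one gets
\[
F_\sigma(\{O\},r)=t-\frac{1}{2\sigma}\Bigl(\tfrac12+t\Bigr)=\frac{(2\sigma-1)t}{2\sigma}-\frac{1}{4\sigma},
\]
which is nondecreasing in $t$ (because $\sigma\ge \tfrac12$) and is therefore maximized at $t=1$, giving value $\frac{4\sigma-3}{4\sigma}$.

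Combining the two cases,
\[
M_\sigma(\{O\})=\max\Bigl\{0,\ \tfrac{4\sigma-3}{4\sigma}\Bigr\},
\]
which is strictly positive if and only if $\sigma>\tfrac34=\sigma_B$. This establishes the equivalence claimed in the theorem. The final bound $\bar\sigma\le\sigma_B$ then follows immediately by applying \cref{cor:finite-minmax} with $k=0$. There is no real obstacle here beyond a careful case analysis of the minimum defining $F_\sigma$; the one subtlety worth flagging is that the optimal choice is the extreme one $t=1$, which is precisely the ``largest admissible ball at the origin'' heuristic underlying Besicovitch's original argument.
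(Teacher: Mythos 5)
Your proof is correct and follows essentially the same route as the paper: reduce $0\oneminM_\sigma$ to the one-variable maximization of $r-\tfrac1{2\sigma}\min\{2r,\tfrac12+r\}$ over $r\in[0,1]$ and conclude $M_\sigma(\{O\})=\max\{0,1-\tfrac{3}{4\sigma}\}$. The only cosmetic difference is that you split into cases according to which term of the minimum is active, whereas the paper rewrites the minimum as a maximum of two affine functions of $r$; the computation and conclusion are identical.
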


\begin{theorem}[Preiss--Ti\v{s}er on steroids]\label{thm:0.726}
    Let $\sigma_{PT}:= \frac{-2+\sqrt[3]{2 (131-9\sqrt{179})}+\sqrt[3]{2(131+9\sqrt{179})}}{12}\approx 0.72655\ldots$ be the unique positive solution of $8s^3 + 4s^2 - 3s-3=0$.
    For $\sigma\in[\tfrac12,1]$,
    \begin{equation*}
        1\oneminM_\sigma > 0 \quad\text{if and only if}\quad \sigma>\sigma_{PT}.
    \end{equation*}
    Therefore (recall \cref{cor:finite-minmax}) $\bar\sigma \le \sigma_{PT}$.
\end{theorem}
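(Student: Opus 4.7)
My plan is to prove the two implications separately, since the statement is an ``if and only if'' between $1\oneminM_\sigma > 0$ and $\sigma > \sigma_{PT}$. The common thread is that for any $P = \{O, q_1, q_2\} \in \mathcal F_\sigma(1)$, rigid motions and reflections reduce $M_\sigma(P)$ to a function of only the three distances $(a_1, a_2, a_{12}) = (|q_1|, |q_2|, |q_1 - q_2|)$, constrained by $a_1, a_2 \in (0, 1]$ and $a_{12} \in [2\sigma, a_1 + a_2]$. Given such a triple, the inner maximisation $M_\sigma(P) = \sup_r F_\sigma(P, r)$ is a piecewise-linear programme on the polytope cut out by the disjointness conditions $r_0 + r_1 \le a_1$, $r_0 + r_2 \le a_2$, $r_1 + r_2 \le a_{12}$, $r_i \in [0, 1]$; the pieces are labelled by which term achieves $R(P, r) = \max(r_0, a_1 + r_1, a_2 + r_2)$, which pair achieves $\diam(U(P, r))$, and which of $\diam$ and $\tfrac12 + R$ realises the outer $\min$. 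On each piece $F_\sigma$ is linear, so its max is attained at an LP vertex and the outer $\max$ is reduced to a finite enumeration.

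For the necessity direction ($\sigma \le \sigma_{PT} \Rightarrow 1\oneminM_\sigma = 0$), my plan is to exhibit an explicit extremal family $P^\star_\sigma = \{O, q_1^\star, q_2^\star\}$ whose parameters $(a_1^\star, a_2^\star, a_{12}^\star)$ are chosen so that several LP vertices simultaneously give $F_\sigma \le 0$, i.e., every ``candidate'' strategy fails. The natural candidates are (i) the Besicovitch strategy $r = (1, 0, 0)$ or its adaptation $r_0 = \min(1, a_2)$; (ii) the ``children-only'' strategy $r_0 = 0$ with $r_1, r_2$ saturating $r_1 + r_2 = a_{12}$; and (iii) hybrid vertices with all three radii positive. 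The equality $F_\sigma(P^\star_\sigma, r) = 0$ imposed on the most dangerous such vertex, together with the boundary conditions $a_i = 1$ or $a_{12} = 2\sigma$, determines $(a_1^\star, a_2^\star, a_{12}^\star)$ as a rational function of $\sigma$; substituting back yields the compatibility condition, which I expect to collapse algebraically to the cubic $8\sigma^3 + 4\sigma^2 - 3\sigma - 3 = 0$. The closed-form $\sigma_{PT}$ then follows by Cardano.

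For the sufficiency direction ($\sigma > \sigma_{PT} \Rightarrow 1\oneminM_\sigma > 0$), the plan is to argue that for every $(a_1, a_2, a_{12})$ in the feasible triangle, at least one of the candidate strategies above gives $F_\sigma(P, r) \ge c(\sigma) > 0$ with a constant depending only on $\sigma - \sigma_{PT}$. Concretely, I would partition the parameter space into a finite number of regions according to which candidate strategy is most effective (for instance: a region where Besicovitch's choice works, a region where the children-only choice works, and a ``mixed'' region covered by a hybrid vertex). In each region, $F_\sigma$ becomes an explicit linear function of $(a_1, a_2, a_{12})$ and checking $F_\sigma > 0$ reduces to verifying a linear inequality on a compact sub-domain; the worst case across all regions is the extremal $P^\star_\sigma$ from the necessity step, which by construction has $F_\sigma = 0$ at $\sigma = \sigma_{PT}$ and $F_\sigma > 0$ for $\sigma > \sigma_{PT}$. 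Lower semi-continuity of $(a_1, a_2, a_{12}) \mapsto M_\sigma$ on the compact parameter domain turns the pointwise positivity into a uniform lower bound $1\oneminM_\sigma \ge c(\sigma) > 0$.

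The hard part will be step (ii): organising the LP enumeration so that exactly one extremal configuration survives, and carrying out the algebra that reduces the tightness system to the cubic $8s^3 + 4s^2 - 3s - 3 = 0$. The Preiss--Ti\v{s}er bound $(2 + \sqrt{46})/12$ corresponds to restricting to a smaller family of strategies, and the improvement to $\sigma_{PT}$ comes from allowing the extra hybrid vertex; getting the bookkeeping right --- in particular tracking which LP constraints become degenerate at the extremal $(a_1^\star, a_2^\star, a_{12}^\star)$ --- is where the Schechter computation cited in the paper is the natural model. I expect that, as in his treatment, two algebraic equations in $(a_1^\star, a_{12}^\star)$ (with $a_2^\star$ eliminated) combine to give the stated cubic.
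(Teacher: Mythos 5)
Your plan follows essentially the same route as the paper's proof: reduce to the three distances $(d_1,d_2,D)$, decompose the inner maximisation into a finite family of linear programs indexed by which subset of points carries positive radii and which term realises $R$ (resp.\ the diameter), observe that the diameter branch is never the active one, and then minimise the resulting explicit piecewise-affine expression over the parameter triangle, where the minimiser is pinned at $D=2\sigma$, $d_1=1$, and the crossing point of two affine pieces in $d_2$, yielding $1\oneminM_\sigma=\max\{0,8\sigma^3+4\sigma^2-3\sigma-3\}/(4\sigma(\sigma+1))$ and hence the cubic. You correctly identify that the gain over Preiss--Ti\v{s}er comes from admitting the vertex with all three radii positive (the mutually tangent configuration), so the remaining work is exactly the case bookkeeping the paper carries out in \cref{sec:0.726}.
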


\begin{theorem}[Bounds on the $2$-nd generation]\label{thm:0.7}
    For $\sigma=0.7$,
    \begin{equation*}
        2\oneminM_\sigma > 0,
    \end{equation*}
    and therefore (recall \cref{cor:finite-minmax}) $\bar\sigma \le 0.7$,
    while for $\sigma=0.683$,
    \begin{equation*}
        2\oneminM_\sigma = 0.
    \end{equation*}
\end{theorem}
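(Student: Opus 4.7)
Both assertions in the theorem rest on the same preliminary reformulation of $M_\sigma$ as a linear-programming value, which I describe first.

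\emph{Linear-programming reformulation.} For $P = \{p_1,\dots,p_N\} \in \mathcal F_\sigma(2)$ (with $N \leq 7$) and $r \in \mathcal R(P)$, the formulas \eqref{eq:RU-formulas} express $\diam(U(P,r))$ and $R(P,r)$ as maxima of finitely many affine functions of the radius vector $(r(p_1),\dots,r(p_N))$, so $F_\sigma(P,\cdot)$ is piecewise-affine concave in $r$. Splitting according to (i) the support of $r$, (ii) the pair of indices realizing $\diam$, (iii) the index realizing $R$, and (iv) whether $\diam(U(P,r))$ or $\tfrac12+R(P,r)$ is active in the inner $\min$, the computation of $M_\sigma(P)$ reduces to a finite family of linear programs in $N$ variables, with the non-overlap conditions $r(p_i)+r(p_j)\le |p_i-p_j|$ entering as affine inequalities. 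Their optima realize $M_\sigma(P)$ as an explicit piecewise-linear function of the coordinates of $P$.

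\emph{The negative direction $2\oneminM_{0.683}=0$.} Since $M_\sigma\ge 0$ by \cref{r:useful}, it suffices to exhibit an explicit $P_\star \in \mathcal F_{0.683}(2)$ with $M_{0.683}(P_\star)\le 0$. I would search for $P_\star$ within a low-dimensional family of highly symmetric seven-point configurations, for example $p_2,p_3$ antipodal on $\partial B_1(O)$ with antipodal children on $\partial B_1(p_2)$ and $\partial B_1(p_3)$, tuning the angles so that $\diam(U(P_\star,r))$ and $\tfrac12+R(P_\star,r)$ compete tightly against $\sum_p r(p)$. The verification then reduces to checking that each of the finitely many LP cases produced by the reformulation yields a nonpositive optimum, which is a finite algebraic check.

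\emph{The positive direction $2\oneminM_{0.7}>0$.} This is the substantive assertion and requires a rigorous computer-assisted argument. After fixing $p_1=O$ and factoring out the rotational (and possibly reflective) symmetry about $O$, the admissible configurations $P \in \mathcal F_{0.7}(2)$ are parameterized by a compact subset of some $\R^d$ of moderate dimension. I would cover this domain by small cells and, on each cell, rigorously lower-bound $M_{0.7}(P)$ via interval-arithmetic evaluation of the LP optima from the reformulation, using a branch-and-bound strategy that refines only cells where the current lower bound fails to be strictly positive. A uniform Lipschitz estimate for $P\mapsto M_\sigma(P)$, inherited from the LP reformulation together with compactness, ensures that the refinement terminates in finitely many cells, and aggregating their lower bounds yields $\inf_{P\in\mathcal F_{0.7}(2)} M_{0.7}(P)>0$.

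The principal obstacle is the positive direction: the effective dimension is high enough to make naive gridding infeasible, and the number of LP subcases in the reformulation is very large. Producing a rigorous certificate with realistic computational resources requires exploiting the symmetries of the problem, a careful enumeration and reduction of the LP cases, and a branch-and-bound scheme whose pruning keeps the total case count manageable. The negative direction, by contrast, should be a short explicit computation once the right symmetric configuration is identified.
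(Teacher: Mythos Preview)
Your proposal correctly identifies the high-level architecture---the linear-programming reformulation of $M_\sigma$, the Lipschitz continuity of $P\mapsto M_\sigma(P)$, an explicit configuration for the negative direction, and a computer-assisted certificate for the positive one---but there are genuine gaps in both directions.

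\textbf{Negative direction.} Your plan is to search among highly symmetric configurations (antipodal children on unit circles). The paper's witness is not of this form: it is an asymmetric $6$-point set (one child coincides with $O$) given by explicit decimal coordinates, for which $M_\sigma^\sharp(P)\approx -0.00032$ and $\bar M_\sigma^\flat(P)\approx -0.1803$ are verified numerically. There is no indication that a symmetric configuration would work, and in any case your proposal stops at ``search and then check'' without producing the witness; this is a search plan, not a proof sketch.

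\textbf{Positive direction.} This is the substantive gap. You acknowledge that naive gridding of the $\sim 11$-dimensional parameter space is infeasible and appeal to generic remedies (symmetry reduction, branch-and-bound, interval arithmetic). The paper makes this quantitative: even $50$ subdivisions per coordinate would require millions of CPU-years. What makes the proof go through is a specific dimensional collapse that your proposal does not contain. One grids \emph{only} the $3$-dimensional space of first-generation pairs $(p_1,p_2)$; for each such pair one attempts, in order, three tests $(S.1)$, $(S.2)$, $(S.3)$, each of which computes $M_\sigma^\sharp$ only on subsets of size at most $5$ (never the full $7$-point set). The decisive test $(S.3)$ encodes ``no admissible quadruple $(q_{11},q_{12},q_{21},q_{22})$ exists'' as the absence of a bicolored $4$-clique in a graph whose vertices are a $2$-dimensional discretization of two regions $X_\sigma(p_i,p_j,r,m)$ and whose edges are determined by \emph{pairwise} constraints. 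This graph-theoretic reformulation is exactly what turns an $8$-dimensional search over second-generation points into a feasible computation. Without this idea---or an equivalent one---your branch-and-bound scheme has no mechanism to terminate in realistic time, and the proposal does not constitute a proof.
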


Notice that \cref{thm:0.7} implies immediately the main result of this note, that is \cref{t:best-bound}. We provide also a lower bound for those $\sigma$'s such that that $\InfoneminM_\sigma>0$, which in turn gives an analogous bound for the finite dimensional problems $k\oneminM_\sigma$, thanks to \cref{lem:easy-properties-minmax}. This is an obstruction to using $\InfoneminM_\sigma$ to prove \cref{c:Bes} but does not say anything about the feasibility of using $\InfminM_\sigma$ to that purpose.

\begin{theorem}[Lower bound on $\InfoneminM_\sigma$]\label{thm:0.64}
    Let $\sigma = 0.64368\dots$ be the unique positive solution of $32s^3-32s^2+12s-3=0$. We have,
    \begin{equation*}
        \InfoneminM_\sigma = 0.
    \end{equation*}
\end{theorem}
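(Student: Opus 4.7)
Since $M_\sigma(P)\geq 0$ for every set $P$ (cf.\ \cref{r:useful}), proving $\InfoneminM_\sigma=0$ reduces to exhibiting, for every $\varepsilon>0$, a $(\sigma,L)$-one-stable set $P\subset\R^2$ with $O\in P$ and $M_\sigma(P)\leq\varepsilon$ (with $L$ free to depend on $\varepsilon$). The natural plan is to construct $P$ as a self-similar set by iterating the recipe that defines the family $\mathcal F_\sigma(k)$: starting from $P_0=\{O\}$, attach at every step to each current point $p$ a pair $(q_1(p),q_2(p))\in\Delta_\sigma(p,1)$, chosen according to a fixed geometric rule that depends on the incoming edge from the parent. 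Because such a construction automatically produces a $(\sigma,\infty)$-one-stable set, the only thing left to verify is the $M_\sigma$-estimate.

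Since the critical $\sigma$ is a root of an irreducible cubic, I expect the extremal attachment rule to involve a small number of scalar parameters, with the critical value characterized as the unique $\sigma$ for which the fixed-point identity ``$F_\sigma(P,r)\leq 0$ along any self-similar choice of radii'' becomes solvable. Eliminating the parameters should reproduce $32\sigma^3-32\sigma^2+12\sigma-3=0$. One-dimensional intuition is not sufficient: for instance, $\Z\times\{0\}\subset\R^2$ is $(\sigma,\infty)$-one-stable for every $\sigma\in[\tfrac12,1]$, yet a quick computation with $r\equiv\tfrac12$ on $\{-N,\dots,N\}\times\{0\}$ yields $F_\sigma \sim N(1-\tfrac{1}{2\sigma})\to+\infty$ as $N\to\infty$ whenever $\sigma>\tfrac12$. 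Hence the extremal configuration must spread genuinely in two dimensions, presumably in a fractal/tree-like pattern.

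The hardest step will be the rigorous upper bound on $M_\sigma(P)$. For this I would employ the linear-programming reformulation of $M_\sigma$ mentioned in \cref{subsec:objective}: since $F_\sigma$ is linear in $r$ subject to finitely many linear disjointness constraints, the supremum over any finite-support $r$ is the value of a finite LP. Using the self-similarity of $P$, one should be able to decompose any feasible $r$ into contributions localized at the vertices of the underlying tree and bound the value of the global LP inductively, obtaining a recursion whose fixed point lies at $0$ precisely at the critical $\sigma$. The main technical obstacle is controlling how a single ball $B_{r(p)}(p)$ may span several generations of the construction; I anticipate this will require a case analysis of the same flavor as those underlying the proofs of \cref{thm:0.726,thm:0.7}.
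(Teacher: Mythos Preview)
Your proposal misses the key structural insight and, as a result, does not reach a proof. You assume that a $(\sigma,\infty)$-one-stable set must be built by indefinitely \emph{adding} new children at every generation, leading you to look for an infinite fractal/tree-like configuration and to worry about bounding $M_\sigma$ inductively over generations. But the one-stability condition (1-St) only asks that for each $p\in P$ there exist $q_1,q_2\in P$ with $(q_1,q_2)\in\Delta_\sigma(p,1)$; nothing prevents $q_1,q_2$ from being \emph{other points already in $P$}. Consequently a finite set can be $(\sigma,\infty)$-one-stable, and this is exactly what the paper exploits: it writes down a single $4$-point configuration $P=\{p_1,p_2,p_3,p_4\}$ forming an isosceles trapezoid with legs of length $1$, diagonals of length $2\sigma$, and bases $b=2(1-\sigma)$, $B=b+\frac{8\sigma-5}{2(1-\sigma)}$. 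One checks directly that $(p_2,p_4)\in\Delta_\sigma(p_1,1)$, $(p_1,p_3)\in\Delta_\sigma(p_2,1)$, etc., so $P$ is $(\sigma,\infty)$-one-stable. Then $M_\sigma(P)=0$ is verified by a short case analysis: for proper subsets $P'\subsetneq P$ one has $M_\sigma^{+,\sharp}(P'),\,M_\sigma^{+,\flat}(P')\le 0$ (finitely many small LPs), while for $P'=P$ the explicit inequalities $r_1+r_2\le b$, $r_3+r_4\le B$ and the averaging $R(P,r)\ge\frac{2\sigma+1+r_3+r_4}{2}$ (resp.\ $\diam(U)\ge 2\sigma+\frac{r_1+r_2+r_3+r_4}{2}$) give $M_\sigma^{+,\sharp}(P)\le\frac{32\sigma^3-32\sigma^2+12\sigma-3}{8\sigma(1-\sigma)}=0$ and likewise for $M_\sigma^{+,\flat}(P)$. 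The cubic appears from this single elementary computation, not from a fixed point of an infinite recursion.

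Your plan, by contrast, is only a sketch: you acknowledge that controlling balls spanning several generations is the ``main technical obstacle'' and defer it to an unspecified case analysis. Even granting that such an argument could be made to work, it is far heavier than needed, and your remark that ``the extremal configuration must spread genuinely in two dimensions, presumably in a fractal/tree-like pattern'' is simply mistaken: the actual extremal configuration is four points.
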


The proof of \cref{thm:convergence-finite-minmax} is contained in \cref{sec:cheap_continuity_arguments}.
The proof of \cref{thm:0.75}, which we include at the end of this section, is immediate (because $\mathcal F_\sigma(0)=\{\{O\}\}$).
The proof of \cref{thm:0.726} is an elementary---albeit involved---reduction to many simple inequalities (see~\cref{sec:0.726}). The arguments given in \cite{PT} can be used to prove \cref{thm:0.726}, i.e., that $1\oneminM_\sigma>0$ for $\sigma>\sigma_{PT}$. 
However the authors did not adopt our point of view and hence were not lead naturally, as we have been, to the sharp bound $\sigma_{PT}$ (their argument, recast in our framework, proves $1\oneminM_{0.73186\ldots}>0$).
The proof of the first statement in \cref{thm:0.7} is much more complicated. 
We will need a sequence of further observations to reduce the dimension of the search space, followed by a computationally-heavy computer-assisted proof by case exhaustion that consists of a fine subdivision of the search space together with a lower bound for $M_\sigma$ over each subdomain. The details are given in \cref{s:0.7}.

The \emph{negative results} (i.e., $2\oneminM_{0.683}=0$ in \cref{thm:0.7} and the statement of \cref{thm:0.64}) are proven in \cref{sec:explicit-counterexamples} by producing explicit small configurations of points.

\begin{proof}[Proof of \cref{thm:0.75}]
    We have $\mathcal F_\sigma(0)=\{\{O\}\}$.
    For $P=\{O\}$, $\mathcal R(P)$ coincides with the interval $[0, 1]$ (by identifying $r$ with $r(O)$). By unrolling the definitions, we have (for $\tfrac12<\sigma\le 1$)
    \begin{align*}
        0\oneminM_\sigma
        &=
        M_\sigma(\{O\})
        =
        \max_{0\le r\le 1} \left(r - \tfrac1{2\sigma}\min\big\{2r,\, \tfrac12 + r\big\}\right)
        \\
        &=
        \max_{0\le r\le 1}
        \max\Big\{
        r\big(1-\tfrac1{\sigma}\big),\,
        r\big(1-\tfrac1{2\sigma})-\tfrac1{4\sigma}
        \Big\}
        =
        \max\big\{0, 1-\tfrac{3}{4\sigma}\big\}.
    \end{align*}
    This exact formula for $0\oneminM_\sigma$ implies the desired statement.
\end{proof}

\section{Generalization to metric spaces}
Even though this note focuses on the case of $\R^2$, many of our results can be generalized to metric spaces. In this section we give a quick overview of which statements can be adapted to the metric setting. For an arbitrary metric space $(X, d)$, we let $\bar\sigma(X, d)$ be the constant analogous to $\bar\sigma$ if $\R^2$ is replaced by $(X, d)$ in \cref{def:bar-sigma}.

If one considers $\R^n$ instead of $\R^2$ as ambient space, all of our results continue to hold (with the exact same proofs) except for the first statement in \cref{thm:0.7}.
The case of an arbitrary metric space $(X, d)$ instead of $\R^2$ is more delicate because of the following issues:
\begin{enumerate}
    \item In our definitions and statements we used extensively a special point of $\R^2$ (i.e., the origin $O$) and a special value for certain distances (i.e., the value $1$), a strategy which works well thanks to the homogeneity properties of Euclidean spaces. 
    \item In the proof of \cref{thm:infinite-minmax}, which is the pillar on which everything else is built, we will leverage the local compactness and doubling properties of the Euclidean spaces (e.g., to prove \cref{lem:continuity-Delta,lem:continuity-sigma-stable,lem:sigma-stable-is-finite}).
\end{enumerate}
To solve the first issue, we shall add more parameters, at the expense of clarity (the origin $O$ becomes an arbitrary point, as well as the value $1$ becomes an arbitrary $D>0$).

To solve the second issue we shall skip the infinite dimensional problems $\InfminM_\sigma$ and $\InfoneminM_\sigma$ and state only the results for the finite dimensional problems. 
\begin{definition}
    Let $(X, d, O\in X)$ be a pointed metric space.
    
    For a set $P\subseteq X$, the definition~\cref{e:def-M} of $M_\sigma(P)$ makes perfect sense also in this more general setting (since we have chosen a special point $O\in X$).
    One may also define the families $(\mathcal F_\sigma^{(X, d, O)}(k))_{k\in\N_0}$ by copying the definition~\cref{eq:F-sigma-k} of the families $\mathcal F_\sigma(k)$.
    Therefore, for any $k\in\N_0$, one can define
    \begin{equation*}
        k\oneminM_\sigma(X, d, O) := \inf_{P\in \mathcal F_\sigma^{(X, d, O)}(k)} M_\sigma(P).
    \end{equation*}
\end{definition}

For an arbitrary metric space, one can use the proof of \cref{thm:infinite-minmax} to show directly \cref{cor:finite-minmax}. Going through this shortcut, all the technical difficulties that required local compactness and the doubling property disappear. Hence, we get the following statement, whose proof we skip because it is a strict subset of the proof of \cref{thm:infinite-minmax}.

\begin{theorem}\label{thm:finite-minmax-metric}
    Let $(X, d)$ be a complete metric space and fix $\sigma\in[\tfrac 12, 1]$.
    If we have
    \begin{equation*}
        \inf_{\substack{O\in X\\ D>0}} k\oneminM_\sigma(X, \tfrac{d}{D}, O) > 0
    \end{equation*}
    for some $k\in\N_0$, 
    then $\bar\sigma(X, d)\le \sigma$.
\end{theorem}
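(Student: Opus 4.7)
The plan is to adapt the argument for \cref{thm:infinite-minmax} in a simplified form, exploiting the fact that only a finite tree of depth $k$ must be built, so the limit arguments that in the Euclidean case require local compactness and doubling become unnecessary. I would argue by contradiction: suppose $\bar\sigma(X,d) > \sigma$, so there exists a Borel set $E \subseteq X$ with $\mathcal H^1(E) < \infty$, $\Theta^1_*(E,x) \ge \sigma$ for $\mathcal H^1$-a.e.\ $x$, yet $E$ is not countably $1$-rectifiable. Replacing $E$ by the full-measure subset of its good points, I may assume the density condition holds at every $x \in E$. Standard density theory (combined with the upper density bound $\Theta^{*,1}(E,x) \le 1$ valid $\mathcal H^1$-a.e.) allows me to select a point $O \in E$ and a scale $D > 0$ such that, in the rescaled pointed metric space $(X, d':=d/D, O)$, the lower density at the relevant points of $E$ is $\ge \sigma - \eta$ at scales comparable to $1$, and the upper density at $O$ controls $\mathcal H^1(\{x \in E: d'(x,O)\le R\})$ by essentially $2R + \eta$ (or, more sharply, by the quantity $\tfrac12 + R$ appearing inside $F_\sigma$), for $\eta > 0$ as small as desired.

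The next step is to construct inductively a set $P \in \mathcal F_\sigma^{(X, d', O)}(k)$ with $P \subseteq E$. At each point $p$ of the current generation, the lower density gives $\mathcal H^1(\{x\in E: d'(x,p)\le 1\}) \ge 2\sigma - o(1)$; combined with the elementary inequality $\mathcal H^1(A) \le \operatorname{diam}(A)$, valid for any set $A$ in any metric space by taking $A$ itself as a cover, this forces the diameter of $E$ inside the unit $d'$-ball around $p$ to be at least $2\sigma - o(1)$. Closedness then yields two points $q_1, q_2 \in E$ with $d'(q_i, p) \le 1$ and $d'(q_1, q_2) \ge 2\sigma$, i.e., $(q_1, q_2) \in \Delta_\sigma(p, 1)$ (for the rescaled metric). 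Iterating $k$ times produces the required $P$; the various $o(1)$ errors can be absorbed by choosing the scale $D$ sufficiently small, and at each step the new branching points can be selected within the full-measure subset of good points by a standard density-point argument.

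To conclude, I would show that for the constructed $P$, every $r \in \mathcal R(P)$ satisfies $F_\sigma(P, r) \le o(1)$, hence $M_\sigma(P) \le o(1)$. Indeed, the lower density at each vertex together with the disjointness of the balls gives
\[
(2\sigma - o(1))\sum_{p\in P} r(p) \le \sum_{p\in P} \mathcal H^1\bigl(\{x\in E: d'(x,p) < r(p)\}\bigr) \le \mathcal H^1(E \cap U(P, r)).
\]
Bounding $\mathcal H^1(E \cap U(P, r))$ from above by $\operatorname{diam}(U(P, r))$ (again via $\mathcal H^1 \le \operatorname{diam}$), and alternatively by $\tfrac12 + R(P, r) + o(1)$ via the upper density at $O$, yields $F_\sigma(P, r) \le o(1)$ uniformly in $r$, hence $M_\sigma(P) \le o(1)$. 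For $D$ small enough this contradicts the hypothesis $M_\sigma(P) \ge \delta > 0$. The main obstacle is the measure-theoretic bookkeeping in selecting $O$, $D$, and the branching points $q_1, q_2$ at each generation so that all density estimates are simultaneously uniform over the finitely many vertices of the tree; since $k$ is fixed, no compactness or doubling argument is needed to close the construction, which is precisely why the metric-space version of the statement can be obtained as a strict subset of the proof of \cref{thm:infinite-minmax}.
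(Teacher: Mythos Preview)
Your argument has a genuine gap at the step where you claim to control $\mathcal H^1(\{x\in E: d'(x,O)\le R\})$ by $\tfrac12 + R$. Upper density at $O$ gives you only $\approx 2R$, not $\tfrac12 + R$, and the difference is fatal: with the $2R$ bound your final inequality becomes
\[
F_\sigma(P,r) \;\le\; \frac{1}{2\sigma}\Bigl(\min\{\diam(U),\,2R\}-\min\{\diam(U),\,\tfrac12+R\}\Bigr)+o(1),
\]
which for $R>\tfrac12$ is of order $\tfrac{R-1/2}{2\sigma}$, not $o(1)$. That this matters is already visible at $k=0$: the positivity $0\oneminM_\sigma>0$ for $\sigma>\tfrac34$ comes entirely from the $\tfrac12+R$ branch of $F_\sigma$ (the $\diam$ branch alone gives $M_\sigma^\flat(\{O\})=0$ for all $\sigma$).

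In the paper the bound $\mathcal H^1(E_1\cap B_R(O))\le R+\tfrac12$ does \emph{not} come from density; it comes from \cref{l:furbata}, which needs as input a \emph{pair} of disjoint Borel sets $E_1,E_2$ and two points at (rescaled) distance $1$. Producing that pair is exactly what the Preiss--Ti\v{s}er machinery (the Besicovitch Pair Condition, \cref{def:BPC-property,def:compact-BPC-property}, and \cref{prop:pt_pair_condition,prop:compact_pair_condition}) does: non-rectifiability forces the existence of two nearby but positively separated pieces of $E$, and then \cref{l:furbata} yields the sharp $R+\tfrac12$ estimate for one of them. Only after this does one build the depth-$k$ tree inside $E_1$; the finiteness of $k$ is what lets one dispense with the compactness/doubling lemmas of \cref{sec:cheap_continuity_arguments}, but the pair-condition route cannot be bypassed. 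Your direct construction from a single non-rectifiable set, without splitting it into a pair, cannot recover the $\tfrac12+R$ term.
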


At this point, one can check that the proofs of \cref{thm:0.75} and \cref{thm:0.726} work verbatim for any metric space, leading to the following statement.
\begin{theorem}\label{thm:01-minmax-metric}
    Let $(X, d, O\in X)$ be a pointed metric space.
    For $\sigma\in[\tfrac 12, 1]$, we have
    \begin{align*}
        0\oneminM_\sigma(X, d, O) > 0 \quad\text{if $\sigma>\sigma_{B}$,}\\
        1\oneminM_\sigma(X, d, O) > 0 \quad\text{if $\sigma>\sigma_{PT}$,}
    \end{align*}
    where the constants $\sigma_B, \sigma_{PT}$ are defined respectively in \cref{thm:0.75} and \cref{thm:0.726}. In view of \cref{thm:finite-minmax-metric}, if $(X,d)$ is an arbitrary metric space, $\bar\sigma(X, d)\leq \sigma_{PT}\approx 0.72655\ldots$.
\end{theorem}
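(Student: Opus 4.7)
The plan is to verify that the proofs of \cref{thm:0.75} and \cref{thm:0.726} transfer verbatim to an arbitrary pointed metric space $(X,d,O)$, and the single observation enabling this transfer is the following. In $\R^2$ the explicit identities~\cref{eq:RU-formulas} express $R(P,r)$ and $\diam(U(P,r))$ as suprema of distance-based quantities attached to the active points of $P$. In a general metric space these identities become only upper bounds: for any $p,p'\in P$ with $r(p),r(p')>0$ and any $x\in B_{r(p)}(p)$, $y\in B_{r(p')}(p')$, the triangle inequality gives $d(O,x)\le d(O,p)+r(p)$ and $d(x,y)\le r(p)+d(p,p')+r(p')$, whence $R(P,r)\le\sup\{d(O,p)+r(p):r(p)>0\}$ and $\diam(U(P,r))\le\sup\{d(p,p')+r(p)+r(p'):r(p),r(p')>0\}$. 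Since $R$ and $\diam(U)$ enter $F_\sigma$ only through the term $-\tfrac{1}{2\sigma}\min\{\diam(U(P,r)),\tfrac12+R(P,r)\}$, upper bounds on these quantities translate directly into lower bounds on $F_\sigma$. Consequently, any radii $r\in\mathcal R(P)$ certifying $F_\sigma(P,r)>0$ in the Euclidean analysis via~\cref{eq:RU-formulas} certify the same, with an at-least-as-good margin, in the metric setting.

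Granting this, the first assertion follows by the one-line computation appearing at the end of \cref{s:statements}: one has $\mathcal F_\sigma^{(X,d,O)}(0)=\{\{O\}\}$, and the choice $r(O)=1$ gives $F_\sigma(\{O\},r)\ge 1-\tfrac{1}{2\sigma}\min\{2,\tfrac12+1\}=1-\tfrac{3}{4\sigma}$, which is positive for $\sigma>\sigma_B$.

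For the second assertion, every $P\in\mathcal F_\sigma^{(X,d,O)}(1)$ has the form $\{O,q_1,q_2\}$ with $d(O,q_i)\le 1$ and $d(q_1,q_2)\ge 2\sigma$, and the proof of \cref{thm:0.726} to be carried out in \cref{sec:0.726} will produce, for each such distance configuration, an explicit triple $(r(O),r(q_1),r(q_2))\in\mathcal R(P)$ for which $F_\sigma(P,r)>0$ as soon as $\sigma>\sigma_{PT}$. The main task is to inspect that argument and check that the chosen radii depend on $P$ only through its distance matrix and that the case-by-case lower bounds on $F_\sigma$ use only the triangle inequality together with the upper-bound forms of~\cref{eq:RU-formulas}. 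This is the main obstacle, but it is the sort of routine bookkeeping that the framework of \cref{s:statements} was designed to make painless: the definitions of $\mathcal F_\sigma(1)$, $\mathcal R(P)$, and $F_\sigma$ are already phrased purely in terms of distances and radii. Once this inspection is completed, $1\oneminM_\sigma(X,d,O)>0$ holds in every metric space for $\sigma>\sigma_{PT}$, and the final bound $\bar\sigma(X,d)\le\sigma_{PT}$ follows from \cref{thm:finite-minmax-metric} with $k=1$, after noting that the whole construction is scale-invariant and thus gives a uniform positive lower bound when $d$ is replaced by $d/D$ and $O$ is varied arbitrarily.
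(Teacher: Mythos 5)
Your proposal is correct and follows essentially the same route as the paper, which simply asserts that the proofs of \cref{thm:0.75} and \cref{thm:0.726} ``work verbatim'' in any metric space. Your key observation --- that in a general metric space the identities \cref{eq:RU-formulas} degrade to upper bounds on $R(P,r)$ and $\diam(U(P,r))$, which only increases $F_\sigma$ for the explicit distance-determined radii produced in \cref{sec:0.726} --- is exactly the justification the paper leaves implicit.
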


\begin{remark}
Note that, strictly speaking, in order to apply \cref{thm:finite-minmax-metric} we need the completeness of the metric space $(X,d)$. However, if $(X,d)$ is not complete, we can simply observe that $\bar\sigma (X,d) \leq \bar{\sigma} (X^c,d^c)$. where $(X^c,d^c)$ denotes its completion.
\end{remark}

\cref{thm:01-minmax-metric} leaves the door open to using the problems $k\oneminM_\sigma$ for $k>1$ to get a sharper bound on $\bar\sigma(X, d)$ for any metric space. This possibility is ruled out by the next statement which shows that --- at least for one particular metric space --- nothing is gained by considering $k\oneminM_\sigma$ for $k>1$ instead of the simpler $1\oneminM_\sigma$.

\begin{theorem}\label{thm:0.726-metric}
    There is a pointed metric space $(X, d, O)$ and a $4$-point set $O\in P\subseteq X$ which is $(\sigma_{PT},\infty)$-one-stable and such that $M_{\sigma_{PT}}(P) = 0$ (see \cref{thm:0.726} for the definition of $\sigma_{PT}$).  In particular, $k\oneminM_{\sigma_{PT}}(X, d, O)=0$ for all $k\in\N_0$.
\end{theorem}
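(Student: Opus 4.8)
The plan is to exhibit, as $X$, a \emph{one-dimensional} space on which the (forced) combinatorial shape of a $4$-point one-stable set can be realized without the pathology that makes $M_\sigma$ large on finite metric spaces. Observe first that if $P=\{p_1,p_2,p_3,p_4\}$ is $(\sigma,\infty)$-one-stable, then every $p_i$ must have two of the other three points at distance $\le 1$ (its children) and the remaining one at distance $\ge 2\sigma>1$; hence the "far graph" on $P$ is a matching, and covering all four leave-one-out triples forces it to be a perfect matching. Thus $P$ must be a "$K_{2,2}$ pattern": two pairs at mutual distance $\ge 2\sigma$ and all four cross-distances $\le 1$. Since no metric space where balls of radius $\le1$ are singletons can have $M_\sigma(P)=0$, the ambient $X$ must be bigger than $P$; and since the $K_{2,2}$ four-point metric is not a tree metric, $X$ cannot be a tree. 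The natural candidate is therefore a weighted $4$-cycle, i.e. a circle. Concretely I would take $X:=\R/4\sigma_{PT}\Z$ (the circle of circumference $4\sigma_{PT}$, with the quotient metric), put $\eps_0:=2\sigma_{PT}-1\in(0,\tfrac12)$, and let $O,A,B,C$ be the points of $X$ at cyclic coordinates $\eps_0,\,0,\,2\sigma_{PT},\,2\sigma_{PT}+\eps_0$, with $P:=\{O,A,B,C\}$. Then $d(O,A)=d(B,C)=\eps_0$, $d(O,B)=d(C,A)=1$, $d(O,C)=d(A,B)=2\sigma_{PT}$.

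One-stability is then immediate: for each $p\in P$ one of the two antipodal pairs $\{A,B\}$, $\{O,C\}$ lies in $\overline{B_1(p)}\times\overline{B_1(p)}$ and its two points are at distance exactly $2\sigma_{PT}$, hence it belongs to $\Delta_{\sigma_{PT}}(p,1)$; explicitly $(A,B)\in\Delta_{\sigma_{PT}}(O,1)$, $(O,C)\in\Delta_{\sigma_{PT}}(A,1)$, $(O,C)\in\Delta_{\sigma_{PT}}(B,1)$, $(A,B)\in\Delta_{\sigma_{PT}}(C,1)$, and these pairs never leave $\overline{B_1(\cdot)}$, so (1-St) holds for all $L>1$, i.e. $P$ is $(\sigma_{PT},\infty)$-one-stable. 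Also $O\in P$ and $\#P=4$, as required.

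The core of the proof is to check $M_{\sigma_{PT}}(P)=0$; since $F_{\sigma_{PT}}(P,r)\big|_{r\equiv0}=0$, it suffices to prove $F_{\sigma_{PT}}(P,r)\le0$ for all $r\in\mathcal R(P)$. First I would identify $\mathcal R(P)$: every ball of radius $\le\eps_0<2\sigma_{PT}$ is a genuine arc, so the disjointness condition is exactly $r(p)+r(p')\le d(p,p')$ for all $p\ne p'$; the constraints $r(O)+r(A)\le\eps_0$ and $r(B)+r(C)\le\eps_0$ force every radius to be $\le\eps_0$, which makes the other four constraints automatic, so $\mathcal R(P)=\{r\ge0:\ r(O)+r(A)\le\eps_0,\ r(B)+r(C)\le\eps_0\}$ and in particular $\sum_{p}r(p)\le 2\eps_0$. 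Since the Euclidean formulas \eqref{eq:RU-formulas} are not available on the circle, I would compute $\diam U(P,r)$ and $R(P,r)$ directly. The key structural fact I would establish is: $\diam U(P,r)=2\sigma_{PT}$ (the maximum possible on $X$) if and only if $U$ contains a pair of antipodal points, and — going through the five candidate pairs of centers $\{p,p'\}$ with $d(p,p')\ge 2\sigma_{PT}-r(p)-r(p')$ (the pairs $\{O,A\}$ and $\{B,C\}$ being excluded by $\sum$-constraints) — this forces $B\in U$ or $C\in U$, hence $R(P,r)\ge 1$. With this in hand the verification splits into three easy cases, each reducing to a linear inequality in $s:=\sum_p r(p)$:
(i) if $\diam U=2\sigma_{PT}$, then $R\ge1$, so $\tfrac12+R\ge\tfrac32>2\sigma_{PT}$ and $F_{\sigma_{PT}}(P,r)=s-1\le 2\eps_0-1=4\sigma_{PT}-3<0$;
(ii) if all positive radii sit in a single "cluster" ($r(B)=r(C)=0$ or $r(O)=r(A)=0$), then $U$ is a union of disjoint arcs inside an arc of length $<2\sigma_{PT}$, so it behaves like intervals in $\R$: $\diam U\ge 2s$, $R\ge\tfrac12\diam U\ge s$, and the elementary inequalities $2s\ge 2\sigma_{PT}s$ and $\tfrac12+s\ge 2\sigma_{PT}s$ (the latter using $s\le\eps_0\le\tfrac1{2\eps_0}$) give $\min\{\diam U,\tfrac12+R\}\ge 2\sigma_{PT}s$, i.e. $F_{\sigma_{PT}}(P,r)\le0$;
(iii) the only remaining possibility ($\diam U<2\sigma_{PT}$ but both clusters carry mass) forces the positive radii to be exactly $\{O,B\}$ or exactly $\{A,C\}$, with $s\le\eps_0$; then a direct computation gives $\diam U=1+s$ and $R\ge1$, so $\min\{\diam U,\tfrac12+R\}=1+s$ and $F_{\sigma_{PT}}(P,r)=s-\tfrac1{2\sigma_{PT}}(1+s)\le\eps_0-1=2\sigma_{PT}-2<0$. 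All the numerical inequalities hold comfortably since $\sigma_{PT}\approx0.7266$. This proves $M_{\sigma_{PT}}(P)=0$.

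Finally, to deduce $k\oneminM_{\sigma_{PT}}(X,d,O)=0$ for every $k\in\N_0$: since $P$ is $(\sigma_{PT},\infty)$-one-stable and all four of its points lie within distance $2\sigma_{PT}$ of $O$, for each $k$ one may apply the metric version of \cref{lem:stable-has-finite-subset} with $L>\max\{k,2\sigma_{PT}\}$ to get $P_k\subseteq P$ with $P_k\in\mathcal F_{\sigma_{PT}}^{(X,d,O)}(k)$; by monotonicity of $M_{\sigma_{PT}}$ (\cref{r:useful}), $0\le M_{\sigma_{PT}}(P_k)\le M_{\sigma_{PT}}(P)=0$, hence $k\oneminM_{\sigma_{PT}}(X,d,O)=0$. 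I expect the main obstacle to be step (iii) and, more generally, the bookkeeping of the case analysis for $M_{\sigma_{PT}}(P)$: because $X$ is a circle one must work from the definitions of $\diam U$ and $R(P,r)$ and be careful with wrap-around and with the open/closed distinction between $B_\rho(p)$ and $\overline{B_\rho(p)}$, especially in pinning down exactly when $\diam U$ attains its maximum $2\sigma_{PT}$ and in the resulting claim $R(P,r)\ge1$, on which cases (i) and (iii) rely.
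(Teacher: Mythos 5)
There is a fatal error in your construction: for your circle $X=\R/4\sigma_{PT}\Z$ one actually has $M_{\sigma_{PT}}(P)\geq 2\sigma_{PT}-1>0$. The root cause is your characterization of $\mathcal R(P)$: by the paper's convention $B_0(q)=\emptyset$, the disjointness constraint $r(p)+r(p')\le d(p,p')$ is only in force when \emph{both} radii are positive, so it is false that every admissible radius is $\le\eps_0$. Concretely, take $r(A)=1$, $r(B)=\eps_0=2\sigma_{PT}-1$, $r(O)=r(C)=0$ (writing $\sigma=\sigma_{PT}$). The only constraint to check is $B_1(A)\cap B_{\eps_0}(B)=\emptyset$, which holds: in your coordinates $B_1(A)=[0,1)\cup(4\sigma-1,4\sigma)$ and $B_{\eps_0}(B)=(1,4\sigma-1)$. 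Then $\sum_p r(p)=1+\eps_0=2\sigma$, while the circle caps both quantities in the penalty term: $U$ is the circle minus two points, so $\diam U=2\sigma$ and $R(P,r)=2\sigma$, whence
\begin{equation*}
F_{\sigma}(P,r)=2\sigma-\tfrac1{2\sigma}\min\{2\sigma,\tfrac12+2\sigma\}=2\sigma-1>0 .
\end{equation*}
This also exposes a second, independent problem: even if you repaired the capping by attaching infinite rays at the four points (as the paper does precisely to make the formulas \cref{eq:RU-formulas} valid), your distances are still wrong. The subtriangle $\{O,A,B\}$ lies in $\mathcal F_\sigma(1)$ with $d_1=1$, $d_2=2\sigma-1$, $D=2\sigma$, and plugging these into \cref{e:max-diesis} gives $2\sigma M^\sharp_\sigma(\{O,A,B\})=4\sigma^2-2\sigma-\tfrac12\approx 0.158>0$ at $\sigma=\sigma_{PT}$; by monotonicity $M_\sigma(P)>0$. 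The extremal Preiss--Ti\v{s}er triangle requires $d_2=\tfrac{2\sigma^2-\sigma+1}{\sigma+1}\approx 0.77$, not $2\sigma-1\approx 0.45$, and this is exactly what the paper builds into its metric $4$-cycle (edge lengths $\tfrac{2\sigma^2-\sigma+1}{\sigma+1},\ \tfrac{2\sigma}{\sigma+1},\ \tfrac{2\sigma^2}{\sigma+1},\ 1$, plus four rays), so that $\{p_1,p_2,p_4\}$ realizes the equality case $M^\sharp_\sigma=0$ and the fourth point can be shown never to help.

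Your structural observations (the ``far graph'' must be a perfect matching, so $P$ is a quadrilateral with two long diagonals; a tree or a discrete space cannot work, so a weighted cycle is the natural ambient space) are all consistent with the paper's construction, and your final reduction from $M_{\sigma_{PT}}(P)=0$ to $k\oneminM_{\sigma_{PT}}(X,d,O)=0$ is fine. But the two quantitative choices that carry the whole proof --- the edge lengths and the need for the space to extend beyond the cycle so that large balls are penalized in $\diam U$ and $R$ --- are both incorrect as proposed, and the case analysis that was supposed to certify $M_{\sigma_{PT}}(P)=0$ silently excludes the very radii that violate it.
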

The construction we use to show \cref{thm:0.726-metric}, which is deeply inspired by our proof of \cref{thm:0.726}, is contained in \cref{sec:explicit-counterexamples}.

\section{Continuity properties of the objects involved in the min-max}\label{sec:cheap_continuity_arguments}

In this section we study continuity and stability properties of the families of sets and the objective functions of our variational problems. In the first part we collect four lemmas about the behavior of $\Delta_\sigma (p,r)$ and $(\sigma, L)$-stable sets and prove \cref{thm:convergence-finite-minmax}. In the second part we prove a sharp Lipschitz bound for $M_\sigma$ which will play a crucial role in \cref{s:0.7}.

\subsection{Estimates on stable sets and perturbations}
Let us start with the following technical lemma about the \emph{continuity} of $\Delta_\sigma$-sets.
\begin{lemma}\label{lem:continuity-Delta}
    For any $\tfrac12\le \sigma' < \sigma\le 1$ and $0<r<r'<1$ such that $\sigma' r' < \sigma r$, there is $\eps=\eps_{\ref*{lem:continuity-Delta}}(\sigma,\sigma',r, r')>0$, depending continuously on its parameters, so that 
    \begin{equation*}
        \bigcup_{(q_1, q_2)\in\Delta_\sigma(p, r)}
        B_\eps(q_1)\times B_\eps(q_2)
        \subseteq \Delta_{\sigma'}(p', r') \qquad \forall p, p'\in\R^2 \text{ with } |p-p'|\le\eps\, .
    \end{equation*}
\end{lemma}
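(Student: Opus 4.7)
The proof is just two triangle inequalities; the whole content is to choose $\eps$ as the minimum of two explicit quantities, one enforcing each of the two defining conditions of $\Delta_{\sigma'}(p', r')$.

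First I would fix $(q_1, q_2)\in\Delta_\sigma(p, r)$ together with points $\tilde q_1\in B_\eps(q_1)$, $\tilde q_2\in B_\eps(q_2)$, and $p'\in\R^2$ with $|p-p'|\le\eps$. The task is to show $(\tilde q_1,\tilde q_2)\in\Delta_{\sigma'}(p', r')$, which unpacks into the two requirements $\tilde q_i\in\overline{B_{r'}(p')}$ for $i=1,2$ and $|\tilde q_1-\tilde q_2|\ge 2\sigma' r'$.

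For the containment, the triangle inequality gives
\[
|\tilde q_i - p'|\le |\tilde q_i-q_i|+|q_i-p|+|p-p'|\le r+2\eps,
\]
so this requirement is met as soon as $\eps\le (r'-r)/2$. For the separation, the reverse triangle inequality yields
\[
|\tilde q_1-\tilde q_2|\ge |q_1-q_2|-|\tilde q_1-q_1|-|\tilde q_2-q_2|\ge 2\sigma r-2\eps,
\]
so this requirement is met as soon as $\eps\le \sigma r-\sigma' r'$. Accordingly I would set
\[
\eps_{\ref*{lem:continuity-Delta}}(\sigma,\sigma',r,r'):=\min\Bigl\{\tfrac{r'-r}{2},\;\sigma r-\sigma' r'\Bigr\},
\]
which is strictly positive precisely because of the two hypotheses $r<r'$ and $\sigma' r'<\sigma r$, and is jointly continuous in its four arguments as the minimum of two continuous functions.

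There is no real obstacle: the only subtlety is recognizing that both bounds on $\eps$ are genuinely needed, the first to absorb the growth of the radius from $r$ to $r'$ and the displacement of the center from $p$ to $p'$, the second to absorb the loss in the pairwise distance caused by moving each $q_i$ to $\tilde q_i$; the two assumptions of the lemma are exactly what makes each of these tolerances positive.
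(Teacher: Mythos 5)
Your proof is correct, and it takes a different (and in one respect better) route than the paper. The paper argues softly: it notes that $\Delta_{\sigma}(O,r)$ is a compact subset of the interior of the closed set $\Delta_{\sigma'}(O,r')$ in $\R^4$, takes $\eps$ to be a fixed fraction of the distance from $\Delta_\sigma(O,r)$ to the complement of $\Delta_{\sigma'}(O,r')$, and then uses translation invariance $\Delta_\sigma(p,r)=(p,p)+\Delta_\sigma(O,r)$ to conclude; the continuity of $\eps$ in $(\sigma,\sigma',r,r')$ is asserted rather than exhibited. You instead unpack the two defining conditions of $\Delta_{\sigma'}(p',r')$ and verify each by a triangle inequality, arriving at the explicit choice $\eps=\min\{\tfrac{r'-r}{2},\,\sigma r-\sigma' r'\}$, which is positive exactly under the two hypotheses and is manifestly continuous. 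What the explicit computation buys is precisely that last point: the continuous dependence of $\eps$ on its parameters, which the lemma's statement requires (and which is used later, e.g.\ in \eqref{e:specify-eps-1} where a minimum of $\eps_{\ref*{lem:continuity-Delta}}$ over a compact range of $r$ is taken), comes for free from your formula, whereas the paper's distance-based definition needs a small additional argument to justify it. The compactness argument, for its part, generalizes with no extra work to situations where the sets are not described by two simple scalar inequalities, but that generality is not needed here. One cosmetic remark: the balls $B_\eps(q_i)$ in the statement are open, so your estimates hold with strict inequalities in the relevant places, which only helps; your bounds as written (with $\le\eps$) are the slightly more generous closed-ball versions and are still sufficient.
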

\begin{proof}
    Observe that $\Delta_{\sigma}(O, r)$ is compactly embedded in $\Delta_{\sigma'}(O, r')$ (interpreting them as subsets of $\R^4$).
    Let $\eps$ be one tenth of the distance between $\Delta_{\sigma}(O, r)$ and the complement of $\Delta_{\sigma'}(O, r')$ (observe that $\eps$ varies continuously with respect to $\sigma,\sigma',r, r'$).
    Because of our choice of $\eps$, we have
    \begin{equation*}
        \{(q_1+u, q_2+v)+w: (q_1, q_2)\in \Delta_\sigma(O, r),\, u, v, w\in \overline{B_{\eps}(O)}\} \subseteq \Delta_{\sigma'}(O, r').
    \end{equation*}
    Since $\Delta_\sigma(p, r)=(p, p) + \Delta_\sigma(O, r)$, it is not hard to verify that this choice of $\eps$ works.
\end{proof}

As a simple consequence we deduce that a stable set remains stable after a perturbation.
\begin{lemma}\label{lem:continuity-sigma-stable}
    For any $\frac12\le \sigma' < \sigma\le 1$ and $0 < L' < L$, there is $\eps=\eps_{\ref*{lem:continuity-sigma-stable}}(\sigma,\sigma',L,L')>0$ such that the following statement holds.

    Let $P\subseteq\R^2$ be a $(\sigma,L)$-stable set and let $P'\subseteq \R^2$ be another subset such that $d_H(P, P')< \eps$, where $d_H$ denotes the Hausdorff distance. 
    Then, $P'$ is a $(\sigma', L')$-stable set.
\end{lemma}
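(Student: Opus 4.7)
The plan is to verify the $(\sigma', L')$-stability of $P'$ directly from \cref{def:stability} by transplanting each request to the nearby set $P$, invoking the (stronger) $(\sigma, L)$-stability there, and then transporting the witnesses back to $P'$ via Hausdorff closeness. The single tool that makes the last step work is \cref{lem:continuity-Delta}.

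First I would fix arbitrary test data $p' \in P' \cap B_{L'}(O)$ and $r' \in ((L')^{-1}, 1]$, and for these construct a companion pair $(p, r)$ adapted to $P$. Set
\[
\delta := \tfrac{1}{2}\min\!\left\{\tfrac{L-L'}{L},\, \tfrac{\sigma-\sigma'}{\sigma}\right\} > 0, \qquad r := (1-\delta)\, r'.
\]
An elementary check shows that $r \in (L^{-1}, 1]$ (because $r' > (L')^{-1}$ and $1-\delta > L'/L$) and that $\sigma r > \sigma' r'$ (because $1-\delta > \sigma'/\sigma$); in particular $r < r'$. Using $d_H(P, P') < \eps$ I then select $p \in P$ with $|p - p'| < \eps$; provided $\eps \le L - L'$, this gives $p \in P \cap B_L(O)$.

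Second, the $(\sigma, L)$-stability of $P$ applied to the pair $(p, r)$ yields a pair $(q_1, q_2) \in \Delta_\sigma(p, r)$ with $q_1, q_2 \in P$. Using $d_H(P, P') < \eps$ once more, I pick $q_i' \in P'$ with $|q_i - q_i'| < \eps$ for $i=1,2$. Since the triple $(\sigma, \sigma', r, r')$ satisfies the hypotheses of \cref{lem:continuity-Delta} (namely $\sigma' < \sigma$, $r < r'$, and $\sigma' r' < \sigma r$), choosing $\eps \le \eps_{\ref*{lem:continuity-Delta}}(\sigma, \sigma', r, r')$ forces $(q_1', q_2') \in \Delta_{\sigma'}(p', r')$, which is precisely the witness the $(\sigma', L')$-stability of $P'$ demands.

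The only subtlety, which I expect to be the sole obstacle, is that the threshold $\eps_{\ref*{lem:continuity-Delta}}(\sigma, \sigma', r, r')$ depends on $r'$, whereas the definition of stability requires a single $\eps$ working for \emph{all} admissible $r'$. This is resolved by observing that $r' \mapsto r = (1-\delta)r'$ is a fixed affine map on the compact interval $[(L')^{-1}, 1]$ and $\eps_{\ref*{lem:continuity-Delta}}$ depends continuously on its arguments; hence its infimum over $r' \in [(L')^{-1}, 1]$ is attained and is strictly positive. Taking $\eps$ to be the minimum of this infimum and $L-L'$ produces a positive $\eps = \eps_{\ref*{lem:continuity-sigma-stable}}(\sigma, \sigma', L, L')$ that depends only on the stated parameters, completing the argument.
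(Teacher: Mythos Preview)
Your proof is correct and follows essentially the same route as the paper's: pull back the test point and radius from $P'$ to $P$ via Hausdorff closeness, invoke the stronger stability there, and push the witnesses forward using \cref{lem:continuity-Delta}, with the uniform $\eps$ obtained by minimizing over the compact range of radii. The only cosmetic difference is that you shrink the radius multiplicatively ($r=(1-\delta)r'$) whereas the paper uses an additive shift; both choices make the verification of the hypotheses of \cref{lem:continuity-Delta} equally straightforward.
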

\begin{proof} For the moment fix $\eps$, whose choice will be specified along the argument.
Take $p'\in P'\cap B_{L'}(O)$ and $\tfrac1{L'}<r\le 1$. There is $p\in P'\cap B_{L'+\eps}(O)$ so that $|p-p'|\le\eps$.
    Fix $\delta>0$ so that $L'+\delta \le L$, $\tfrac 1{L'}-\delta >\tfrac1L$, and $\sigma'\tfrac1{L'} < \sigma (\tfrac1{L'}-\delta)$ (in particular $\sigma'r < \sigma (r-\delta)$). 
    We assume that $\eps<\delta$.
    Since $P$ is $(\sigma, L)$-stable, we can find $(q_1, q_2)\in \Delta_\sigma(p, r-\eps)$  such that $q_1, q_2\in P$. There are $q_1', q_2'\in P'$ so that $|q_1-q_1'|<\eps$ and $|q_2-q_2'|<\eps$.
    
    Assume in addition to $\varepsilon<\delta$ that 
    \begin{equation}\label{e:specify-eps-1}
        \eps < \min_{r\in \big[\tfrac 1{L'}, 1\big]} \eps_{\ref*{lem:continuity-Delta}}(\sigma,\sigma',r-\delta,r)\, .
    \end{equation} 
    Thanks to \cref{lem:continuity-Delta}, we deduce $(q_1', q_2')\in \Delta_{\sigma'}(p', r)$. Since $p'$ and $r$ were arbitrary, we have proven that $P'$ is $(\sigma', L')$-stable.
\end{proof}

In the following statement we prove that $(\sigma, L)$-stability is in fact a ``discrete notion''.
\begin{lemma}\label{lem:sigma-stable-is-finite}
    For any $\frac12 \le \sigma' \le \sigma\le 1$ and $0<L'<L$, there is a positive integer $N=N_{\ref*{lem:sigma-stable-is-finite}}(\sigma,\sigma',L,L')$ such that the following statement holds.
    
    Any $(\sigma,L)$-stable set $P\subseteq\R^2$ containing the origin admits a $(\sigma',L')$-stable subset $O\in P'\subseteq P\cap B_{L+1}(O)$ with at most $N$ points.
\end{lemma}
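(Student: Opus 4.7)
The plan is to take $P'$ to be a maximal $\eps_0$-separated subset of $P\cap B_{L+1}(O)$ containing $O$, for a suitable $\eps_0=\eps_0(\sigma,\sigma',L,L')>0$. Once this is arranged, the cardinality bound $|P'|\le N_{\ref*{lem:sigma-stable-is-finite}}$ will be the standard planar packing estimate $|P'|\le C(L+1)^2/\eps_0^2$, which determines $N_{\ref*{lem:sigma-stable-is-finite}}$.

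The constants are chosen so as to leave room for two kinds of loss: a loss in the radius (from a ``source'' radius $r-\delta$ to a ``target'' radius $r$) and a loss in the parameter $\sigma$ (from $\sigma$ down to $\sigma'$). Concretely, I first fix $\delta=\delta(\sigma,\sigma',L,L')>0$ small enough that $r-\delta\in(L^{-1},1)$ for every $r\in(L'^{-1},1]$ and simultaneously $\sigma' r<\sigma(r-\delta)$; both conditions can be satisfied because $L^{-1}<L'^{-1}$ and $\sigma'<\sigma$. I then apply \cref{lem:continuity-Delta} uniformly in $r\in[L'^{-1},1]$ to extract $\eps_0=\eps_0(\sigma,\sigma',L,L')>0$ such that every $\eps_0$-perturbation of a pair in $\Delta_\sigma(p,r-\delta)$ still belongs to $\Delta_{\sigma'}(p,r)$ (the endpoint $r=1$ can either be absorbed by an elementary perturbation computation in the plane, or handled separately using the finiteness of $P'$). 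With these constants in hand, $P'$ is constructed greedily: start from $\{O\}$, and iteratively add any point of $P\cap B_{L+1}(O)$ whose distance to the current set is at least $\eps_0$, stopping when no such addition is possible.

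The core step is then to verify that $P'$ is $(\sigma',L')$-stable. Given $p\in P'\cap B_{L'}(O)$ and $r\in(L'^{-1},1]$, the $(\sigma,L)$-stability of the ambient set $P\supseteq P'$ at radius $r-\delta$ produces a pair $(q_1,q_2)\in\Delta_\sigma(p,r-\delta)\cap(P\times P)$. Because $|q_i-p|\le r-\delta\le 1$ and $|p|<L'$, both $q_i$ lie in $P\cap B_{L+1}(O)$, so the maximality of $P'$ supplies $q_1',q_2'\in P'$ with $|q_i-q_i'|<\eps_0$. By the choice of $\eps_0$ the perturbed pair $(q_1',q_2')$ lies in $\Delta_{\sigma'}(p,r)$, which is precisely the condition of $(\sigma',L')$-stability.

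The main obstacle is the uniform bookkeeping needed to make a single pair $(\delta,\eps_0)$ control the perturbation simultaneously for all radii $r\in(L'^{-1},1]$; this is exactly what the continuous dependence of $\eps_{\ref*{lem:continuity-Delta}}$ on its parameters is designed to provide. Everything else---the greedy construction, the containment $q_i\in B_{L+1}(O)$, and the planar packing bound---is routine.
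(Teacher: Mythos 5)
Your proposal is correct and follows essentially the same route as the paper: a maximal $\eps_0$-separated subset of $P$ (giving the packing bound $N\lesssim (L+1)^2/\eps_0^2$) whose stability is verified by perturbing the witness pairs via \cref{lem:continuity-Delta}, with the radius margin $\delta$ and the $\sigma\to\sigma'$ margin absorbing the $\eps_0$-displacement uniformly over $r\in[L'^{-1},1]$. The only cosmetic difference is that the paper packages the perturbation step as \cref{lem:continuity-sigma-stable} and then intersects with $B_{L+1}(O)$ afterwards, whereas you inline that argument and work inside $B_{L+1}(O)$ from the start; both are sound.
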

\begin{proof}
    Fix $\eps < \eps_{\ref*{lem:continuity-sigma-stable}}(\sigma,\sigma',L,L')$.
    Let $\tilde P\subseteq P$ be a subset such that:
    \begin{enumerate}
        \item Any two distinct points $x, y\in \tilde P$ satisfy $|x-y|>\eps$,
        \item For any $p\in P$ there is $x\in \tilde P$ so that $|x-p|\le \eps$.
    \end{enumerate}
    One way to construct $\tilde{P}$ is to start with the singleton $\bar P=\{O\}$ and inductively add to $\bar P$ the point $p\in P\setminus \bar P$ which minimizes $|p|$ and so that $\{p\} \cup \bar P$ does not violate the first condition. This procedure must stop after a finite number of additions and $\tilde{P}$ is the resulting set after the last addition.
    A consequence of the second condition is that $d_H(P, \tilde P)<\eps$ and thus, by applying \cref{lem:continuity-sigma-stable}, we obtain that $\tilde P$ is $(\sigma', L')$-stable.

    Let $P' := \tilde P\cap B_{L+1}(O)$. Since the notion of $(\sigma', L')$-stability does not take into account points outside of $B_{L'+1}(O)$, $P'$ mst be $(\sigma', L')$-stable. Moreover, the first condition satisfied by $\tilde P$ guarantees that $P'$ can have at most $C \frac{L^2}{\eps^2}$ elements, for a universal constant $C>0$.
\end{proof}

The following statement is a partial converse to \cref{lem:stable-has-finite-subset}.
\begin{lemma}\label{lem:finite-has-stable-subset}
    For any $\tfrac12\le\sigma'<\sigma\le 1$, any $L>0$, and any $1<r<2$ so that $\sigma'r<\sigma$, there is $\kappa=\kappa_{\ref*{lem:finite-has-stable-subset}}(\sigma,\sigma',L,r)\in\N_0$ such that that the following holds.

    Any $P\in\mathcal F_\sigma(k)$, with $k\ge\kappa$, admits a subset $O\in P'\subseteq P$ such that $\frac1r P'$ is $(\sigma', L)$-one-stable.
\end{lemma}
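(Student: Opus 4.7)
My plan is a compactness/contradiction argument, exploiting the simple observation that for any non-leaf node $p$ of a tree $P \in \mathcal F_\sigma(k)$ the two children lie in $\Delta_\sigma(p, 1) \subseteq \Delta_{\sigma'}(p, r)$: the inclusion uses both hypotheses $r > 1$ (so that $\overline{B_1(p)} \subseteq \overline{B_r(p)}$) and $\sigma' r < \sigma$ (so that $2\sigma \geq 2\sigma' r$). Hence, taking $P' = P_\kappa$, any violation of the conclusion produces a ``bad'' leaf $p_\kappa \in P_\kappa \cap B_{rL}(O)$ for which $P_\kappa \cap \overline{B_r(p_\kappa)}$ has diameter strictly below $2\sigma' r$.

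Assume for contradiction there is such a sequence $P_\kappa \in \mathcal F_\sigma(k_\kappa)$ with $k_\kappa \to \infty$. First I would apply Blaschke's theorem to extract a subsequence with $p_\kappa \to p_\infty \in \overline{B_{rL}(O)}$ and $P_\kappa \to P_\infty$ in local Hausdorff distance for a closed set $P_\infty \supseteq \{O, p_\infty\}$. A further diagonal extraction on tree ancestors gives, for every fixed $i \in \N$, limits $a^{(i)}, s^{(i)} \in P_\infty$ of the $i$-th ancestor of $p_\kappa$ and its sibling; passing the $\Delta_\sigma$-relation to the limit via \cref{lem:continuity-Delta} then yields $(a^{(i-1)}, s^{(i)}) \in \Delta_\sigma(a^{(i)}, 1)$ in $P_\infty$ for every $i \geq 1$, with $a^{(0)} = p_\infty$ and $|a^{(i)} - p_\infty| \leq i$. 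The plan is then to exhibit a pair in $P_\infty \cap \overline{B_r(p_\infty)}$ of separation at least $2\sigma$: once such a pair is found, \cref{lem:continuity-Delta} combined with the Hausdorff convergence produces, for large enough $\kappa$, a pair in $P_\kappa \cap \overline{B_r(p_\kappa)}$ of separation $\geq 2\sigma' r$, contradicting the choice of $p_\kappa$ and thereby determining the value of $\kappa$ implicitly through the quantitative continuity estimate.

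The natural candidate limit pair is $(a^{(i^\star - 1)}, s^{(i^\star)}) \in \Delta_\sigma(a^{(i^\star)}, 1)$ for an index $i^\star \geq 1$ satisfying $|a^{(i^\star)} - p_\infty| \leq r - 1$, since then both points of the pair lie in $\overline{B_1(a^{(i^\star)})} \subseteq \overline{B_r(p_\infty)}$ with separation $\geq 2\sigma$. The main obstacle, and the technical core of the argument, is to ensure that such an $i^\star$ exists. Since the chain $(a^{(i)})_{i \geq 0}$ starts at $a^{(0)} = p_\infty$, evolves by consecutive steps of length at most $1$, and is ``tethered'' by the condition $O \in P_\infty$ (coming from $O \in P_\kappa$ for every $\kappa$), the plan here is to invoke a Bolzano--Weierstrass accumulation for the chain and a secondary application of \cref{lem:continuity-Delta} to endow accumulation points in $P_\infty$ with $\Delta_\sigma$-children; pushing this analysis through should produce an ancestor (or an accumulation point of the chain) lying within distance $r - 1$ of $p_\infty$, thereby completing the argument.
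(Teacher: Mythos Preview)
Your proposal has a genuine gap at precisely the step you flag as the ``main obstacle''. By taking $P' = P_\kappa$ and then tracing the ancestor chain of the bad leaf $p_\kappa$, you are in effect trying to prove the stronger claim that $\frac{1}{r}P$ is \emph{itself} $(\sigma', L)$-one-stable once $k$ is large. This is false: one can place a last-generation leaf $p^*$ in $B_{rL}(O)$ with its parent at distance close to $1$ and its sibling near the antipodal point of $\overline{B_1(a^{(1)})}$ at distance close to $2>r$, while steering every other branch of the tree away from $\overline{B_r(p^*)}$; then $p^*$ has no $\Delta_{\sigma'}(\cdot,r)$-pair in $P$, and increasing $k$ does not change this. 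Your Bolzano--Weierstrass plan does not rescue the argument either, since accumulation points of the ancestor chain have no reason to lie within $r-1$ of $p_\infty$---the chain may simply walk away from $p_\infty$ after the first step and never return. The crucial hypothesis you never exploit is that \emph{no} subset of $P_\kappa$ works, not merely that the full set $P_\kappa$ fails.

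The paper's argument is constructive and avoids this trap. It builds $P'$ greedily: starting from $Q_0 = \{O\}$, at step $i$ one picks any point $p_i \in Q_i \cap B_{2L}(O)$ that still lacks a $\Delta_{\sigma'}(\cdot,r)$-pair inside $Q_i$, and adjoins to $Q_i$ the pair of children of $p_i$ supplied by the tree structure (which lie in $\Delta_\sigma(p_i,1)$). For $i<j$ the children of $p_i$ are already present in $Q_j$, so since $p_j$ is still bad one has $\Delta_\sigma(p_i,1)\not\subseteq\Delta_{\sigma'}(p_j,r)$; then \cref{lem:continuity-Delta} forces $|p_i-p_j|>\eps$ for a fixed $\eps>0$, and a packing bound in $B_{2L}(O)$ terminates the process after at most $\kappa$ steps, yielding the desired one-stable subset.
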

\begin{proof}
    We prove the statement for $\kappa$ equal to the maximum number of points that can be put in $B_{2L}(O)$ so that the pairwise distances are all greater than $\eps_{\ref*{lem:continuity-Delta}}(\sigma', \sigma, 1, r)$ (see \cref{lem:continuity-Delta}).

    Take $P\in \mathcal F_\sigma(k)$.
    By definition of $\mathcal F_\sigma(k)$, we can find $\{O\}=P_0\subseteq P_1 \subseteq\dots \subseteq P_k=P$ so that $P_i\in \mathcal F_\sigma(i)$ and for any $p\in P_i$ there are $(q_1, q_2)\in \Delta_\sigma(p, 1)$ such that $q_1, q_2\in P_{i+1}$.
    
    We construct a sequence of increasing subsets $O\in Q_i\subseteq P_i$.
    Set $Q_0:= \{O\}$. Then, for $0\le i<k$, define $Q_{i+1}$ as follows. 
    Choose arbitrarily $p_i\in Q_i\cap B_{2L}(O)$ so that $\Delta_{\sigma'}(p_i, r)\cap (Q_i\times Q_i) = \emptyset$. If such a point $p_i$ does not exist, end the construction.
    Otherwise set $Q_{i+1}=Q_i\cup\{q_1, q_2\}$, where $(q_1, q_2)\in\Delta_\sigma(p_i, 1)$ and $q_1, q_2\in P_{i+1}$.

    Let us show that the construction ends strictly before reaching the index $i=\kappa$. If $i<j$, by construction, we know that there are $(q_1, q_2)\in \Delta_\sigma(p_i, 1)$ with $q_1, q_2\in P_{i+1}\subseteq P_j$. Therefore, by definition of $p_j$, it must hold $(q_1, q_2)\not\in \Delta_{\sigma'}(p_j, r)$. In particular, $\Delta_\sigma(p_i, 1)\not\subseteq \Delta_{\sigma'}(p_j, r)$.
    Thus \cref{lem:continuity-Delta} implies that $|p_i-p_j|>\eps_{\ref*{lem:continuity-Delta}}(\sigma',\sigma,1,r)$. By definition of $\kappa$, we deduce that the construction ends before reaching the index $i=\kappa$.

    Define $P':= Q_i$ where $i$ is the index such that the construction ends at the $i$-th step. In particular, for any $p\in P'\cap B_{2L}(O)$, there are $(q_1, q_2)\in\Delta_{\sigma'}(p, r)$ so that $q_1, q_2\in P'$. This condition is equivalent the $(\sigma', \tfrac{2L}r)$-one-stability of $\frac1r P'$ which is strictly stronger than the desired $(\sigma', L)$-one-stability.
\end{proof}

This last lemma allows us to provide a short proof for \cref{thm:convergence-finite-minmax}.
\begin{proof}[Proof of \cref{thm:convergence-finite-minmax}]
    The inequality $\lim_{k\to\infty}k\oneminM_\sigma\le \InfoneminM_\sigma$ is a direct consequence of \cref{lem:easy-properties-minmax}. 

    To show the other inequality in the statement, choose $\tfrac12\le \sigma'<\sigma$ and $L>0$. Fix arbitrarily $1<r<\tfrac\sigma{\sigma'}$ and let  $\kappa:=\kappa_{\ref*{lem:finite-has-stable-subset}}(\sigma',\sigma, L, r)$. 

    Take $P\in\mathcal F_\sigma(k)$ for $k\ge\kappa$.
    Thanks to \cref{lem:finite-has-stable-subset}, we can find $O\in P'\subseteq P$ so that $\frac1r P'$ is $(\sigma', L)$-one-stable. Clearly we have
    \begin{equation}\label{eq:local53}
        M_{\sigma'}(\tfrac1r P') \le M_{\sigma}(\tfrac1r P').
    \end{equation}
If $M_\sigma (\tfrac{1}{r} (P'))=0$ then $M_\sigma (\tfrac{1}{r} P')\leq M_\sigma (P')$.
Otherwise, fix $\rho \in \mathcal{R} (\tfrac{1}{r} P')$ and consider $\bar\rho: P' \to [0, \infty)$ given by $\bar\rho (p) = r \rho (\frac{p}{r})$. We have 
\begin{equation}\label{eq:local54}
r F_\sigma (\tfrac{1}{r} P', \rho)
\le
F_\sigma (P', \bar \rho) \, 
\end{equation}
(observe that the previous inequality is not an {\em equality}: while $F_\sigma+\tfrac{1}{2}$ is $1$-homogeneous, $F_\sigma$ is not). 

Observe that $\bar \rho$ might not belong to $\mathcal{R} (P')$ because some radii $\bar\rho (p)$ might exceed $1$. These radii are however no larger than $r$ and their number does not exceed $CL^2$, because the disks $B_{\rho (p)} (p)$ are all contained in $B_{L+1} (O)$ and are pairwise disjoint.
We then define the function $\tilde\rho(p):=\min\{\bar{\rho} (p),1\}$, which is in $\mathcal{R} (P')$, and deduce 
\begin{equation}\label{eq:local55}
    F_\sigma (P', \bar \rho) 
    \leq F_\sigma (P', \tilde \rho)+CL^2(r-1)\,.
\end{equation}
Joining \cref{eq:local53,eq:local54,eq:local55} and taking the supremum over $\rho\in \mathcal R(\tfrac1r P')$, we derive the inequality
\begin{align*}
M_{\sigma'} (\tfrac{1}{r} P')
\le
M_\sigma (\tfrac{1}{r} P') 
\le  M_\sigma (P') + C L^2 \left(r-1\right)
\le  M_\sigma (P) + C L^2 \left(r-1\right)\, . 
\end{align*}
In particular we conclude
\[
\InfoneminM_{\sigma'} (L) \leq k\oneminM_\sigma + C L^2 \left(r-1\right)\, .
\]
Letting $k\uparrow \infty$ we thus reach
\[
\InfoneminM_{\sigma'} (L) \leq \lim_{k\uparrow \infty} k\oneminM_\sigma + C L^2 \left(r-1\right)\, .
\]
This inequality is however valid for every $1<r<\frac{\sigma}{\sigma'}$, in particular we can let $r\downarrow 1$ to reach
\[
\InfoneminM_{\sigma'} (L) \leq  \lim_{k\uparrow \infty} k\oneminM_\sigma .
\]
Finally, letting first $L\uparrow \infty$ and then $\sigma' \uparrow \sigma$ we conclude the proof.
\end{proof}

\subsection{Lipschitz continuity of \texorpdfstring{$M_\sigma$}{Msigma}}
Finally, let us show some form of Lipschitz continuity for $M_\sigma$. Observe that the previous statements of this section were qualitative in nature. The constants appearing in the statements are not explicit and the proofs do not even try to obtain the best possible constants.  
On the other hand, the next one is quantitative and rather sharp. We need this additional care because this lemma will be used in the computer-assisted proof of \cref{thm:0.7} in \cref{s:0.7} and the actual value of the Lipschitz constant matters to ensure the feasibility of the computations.
\begin{lemma}\label{lem:M-lip}
    For $\sigma\in[\tfrac12,1]$, the map $P\mapsto M_\sigma(P)$ satisfies the following Lipschitz bound over sets $P$ and $Q$ with the same cardinality:
    \begin{equation}\label{e:M-lip}
        |M_\sigma(P)-M_\sigma(Q)| 
        \le 
        \inf_{\gamma:P\to Q \text{ bijection}} 
            \sum_{p\in P} |p-\gamma(p)|\, .
    \end{equation}
\end{lemma}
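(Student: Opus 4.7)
The plan is to work one direction of \cref{e:M-lip} at a time and use symmetry to combine them. Fix a bijection $\gamma:P\to Q$ and set $\delta := \sum_{p \in P} |p - \gamma(p)|$. Since the infimum on the right-hand side of \cref{e:M-lip} is invariant under $\gamma \leftrightarrow \gamma^{-1}$, it suffices to show $M_\sigma(P) - M_\sigma(Q) \le \delta$: the reverse inequality will then follow by exchanging $P$ and $Q$. In turn, this reduces to producing, for every $r \in \mathcal{R}(P)$, a corresponding $r' \in \mathcal{R}(Q)$ with $F_\sigma(Q, r') \ge F_\sigma(P, r) - \delta$.

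The natural candidate will be to transport $r$ along $\gamma$ while shrinking each radius by the displacement of its center, namely
\begin{equation*}
r'(\gamma(p)) := \max\{0,\, r(p) - |p - \gamma(p)|\}\,.
\end{equation*}
Finiteness of the support is immediate, and disjointness of the balls $\{B_{r'(q)}(q)\}_{q \in Q}$ follows from the triangle inequality: whenever $r'(\gamma(p_1))$ and $r'(\gamma(p_2))$ are both positive, $r(p_1)+r(p_2)\le|p_1-p_2|$ by disjointness of the original family, and
\begin{equation*}
|\gamma(p_1) - \gamma(p_2)| \ge |p_1 - p_2| - |p_1 - \gamma(p_1)| - |p_2 - \gamma(p_2)| \ge r'(\gamma(p_1)) + r'(\gamma(p_2)).
\end{equation*}
Hence $r' \in \mathcal{R}(Q)$.

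The estimate $F_\sigma(Q, r') \ge F_\sigma(P, r) - \delta$ will then rest on three ingredients. First, the pointwise inequality $\max\{0,a\}\ge a$ yields $\sum_{q\in Q} r'(q)\ge \sum_{p\in P} r(p) - \delta$. Second, the explicit formulas \cref{eq:RU-formulas} imply that the suprema for $(Q,r')$ are taken only over indices $p$ with $r'(\gamma(p))>0$, and on each such index the triangle inequality gives $|\gamma(p)|+r'(\gamma(p))\le |p|+r(p)$ and $|\gamma(p_1)-\gamma(p_2)|+r'(\gamma(p_1))+r'(\gamma(p_2))\le |p_1-p_2|+r(p_1)+r(p_2)$; hence $R(Q,r')\le R(P,r)$ and $\diam(U(Q,r'))\le \diam(U(P,r))$. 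Plugging these into the definition of $F_\sigma$ gives
\begin{equation*}
F_\sigma(Q,r') - F_\sigma(P,r) \ge -\delta - \tfrac{1}{2\sigma}\bigl(\min\{\diam(U(Q,r')),\tfrac12+R(Q,r')\} - \min\{\diam(U(P,r)),\tfrac12+R(P,r)\}\bigr) \ge -\delta,
\end{equation*}
since the parenthesized difference is nonpositive. Taking the supremum over $r$ and swapping the roles of $P$ and $Q$ completes the proof.

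I do not foresee any genuine obstacle. The only subtle point is that the truncation $\max\{0,\cdot\}$ can remove some points from the suprema defining $R(Q,r')$ and $\diam(U(Q,r'))$, but this is entirely helpful because those suprema are monotone with respect to the index set; in particular, no appeal to $\sigma\ge\tfrac12$ or any further structural feature of $F_\sigma$ is needed.
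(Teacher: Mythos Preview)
Your proof is correct and follows essentially the same approach as the paper: define $r'(\gamma(p)) = \max\{0, r(p) - |p-\gamma(p)|\}$, check $r'\in\mathcal{R}(Q)$, bound the sum of radii from below and the min term from above, then use symmetry. The only cosmetic difference is that the paper observes the single containment $B_{r'(\gamma(p))}(\gamma(p)) \subseteq B_{r(p)}(p)$, which simultaneously yields disjointness, $U(Q,r')\subseteq U(P,r)$ (hence the diameter and $R$ bounds) in one stroke, whereas you verify each of these via the explicit formulas \cref{eq:RU-formulas} and the triangle inequality.
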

\begin{proof}
    Fix a family of radii $r\in \mathcal R(P)$ and a bijection $\gamma:P\to Q$. We will show below that
    \begin{equation}\label{e:reduced}
        M_\sigma(Q) \ge F_\sigma(P, r) - \sum_{p\in P} |p-\gamma(p)|.
    \end{equation}
    Taking the supremum over $r$ and the infimum over $\gamma$ we reach the inequality \eqref{e:M-lip} without the absolute value on the left hand side. But by symmetry in $P$ and $Q$ this suffices. 
    
    In order to show \eqref{e:reduced} we define $r':Q\to[0,1]$ as $r'(\gamma(p)) := \max\{0, r(p)-|p-\gamma(p)|\}$. 
    It is then clear that $B_{r'(\gamma(p))} (\gamma(p)) \subseteq B_{r(p)} (p)$ for all $p\in P$, which implies $r' \in \mathcal{R} (Q)$, but also $R(Q, r') \leq R(P, r)$ and $\diam\, (U (Q, r')) \leq \diam\, (U (P, r))$. Thus we have 
    \begin{align*}
    M_\sigma(Q) &\geq F_\sigma(Q, r') = \sum_{p\in P} r'(\gamma(p)) - 
    \tfrac1{2\sigma}\min\{R(Q, r') + \tfrac{1}{2}, \diam\, (U (Q, r'))\}\\
    &\geq \sum_{p\in P} \left( r(p) - |p-\gamma(p)|\right) 
    - \tfrac1{2\sigma} \min\{R(P, r) + \tfrac{1}{2}, \diam\, (U (P, r))\}
    \\
    &=
    F_\sigma(P, r) - \sum_{p\in P} |p-\gamma(p)|\,. \qedhere
    \end{align*}
\end{proof}

\section{Proof of \texorpdfstring{\cref{thm:infinite-minmax}}{Theorem~\ref{thm:infinite-minmax}}}\label{sec:proof-infinite-minmax}

The proof of  \cref{thm:infinite-minmax} builds upon the approach developed in \cite{PT}. 
We start recalling some definitions and statements from the latter paper which will be useful for us, first of all
the following lemma, whose neat proof we include for the reader's convenience. In what follows a ``measure'' on a metric space $(X,d)$ will always mean a Borel regular metric outer measure, cf \cite[page 280, Remark (i)]{PT}.
\begin{lemma}[{\cite[Lemma 3]{PT}}]\label{l:furbata}
Let $\mu$ be a Borel measure over a metric space $(X, d)$ satisfying $\mu (S)\leq \diam\, (S)$ for every nonempty Borel set $S\subset X$. Let $E_1, E_2$ be two disjoint Borel subsets of $X$ and let $p_1\neq p_2$ be two points.
For at least one $i\in\{1,2\}$, the following must hold:
\begin{align}
\mu (B_r (p_i) \cap E_i) \leq r+\tfrac{d(p_1, p_2)}{2} \qquad \forall r\geq 0\, .\label{e:furbata}
\end{align}
\end{lemma}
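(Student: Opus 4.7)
The plan is to argue by contradiction. Suppose that for each $i\in\{1,2\}$ there exists a radius $r_i\ge 0$ with $\mu(A_i)>r_i+\tfrac{d}{2}$, where I set $d:=d(p_1,p_2)$ and $A_i:=B_{r_i}(p_i)\cap E_i$. Since $E_1\cap E_2=\emptyset$ the sets $A_1$ and $A_2$ are disjoint Borel subsets, and summing the two violations gives
\[
\mu(A_1\cup A_2)=\mu(A_1)+\mu(A_2) > r_1+r_2+d.
\]

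I would then confront this with the hypothesis $\mu(A_1\cup A_2)\le\diam(A_1\cup A_2)$, estimating the diameter through the triangle inequality. For $x\in A_1$ and $y\in A_2$ one has $d(x,y)\le d(x,p_1)+d(p_1,p_2)+d(p_2,y)<r_1+r_2+d$, while for two points in the same $A_i$ one trivially gets $d(x,y)<2r_i$. Thus
\[
\diam(A_1\cup A_2)\le \max\bigl\{2r_1,\,2r_2,\,r_1+r_2+d\bigr\}.
\]
In the favorable regime $|r_1-r_2|\le d$ both $2r_i$ are dominated by $r_1+r_2+d$, so $\diam(A_1\cup A_2)\le r_1+r_2+d$, and combining with the previous display yields the contradiction $\mu(A_1\cup A_2)>\diam(A_1\cup A_2)\ge \mu(A_1\cup A_2)$.

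The hard part will be the complementary regime $|r_1-r_2|>d$ (say $r_1>r_2+d$), in which the diameter of $A_1$ alone may exceed $r_1+r_2+d$ and the naive estimate is too weak. My strategy would be to exploit the monotonicity and left-continuity in $r$ of $g_i(r):=\mu(B_r(p_i)\cap E_i)$: the violation set $\{r:g_1(r)>r+d/2\}$ contains a left-neighborhood of each of its points (by left-continuity of $g_1$), so one should be able to replace the witness $r_1$ by a smaller one $r_1'\le r_2+d$ still satisfying $g_1(r_1')>r_1'+d/2$. Once this reduction is in place, the easy regime $|r_1'-r_2|\le d$ applies and closes the argument. I expect the delicate step to be ensuring that such a smaller witness exists, which is where the extremality of the violating radii and the interplay between $g_1$ and $g_2$ (which forces $g_1(r)+g_2(r)\le 2r+d$ pointwise, via the same union-diameter trick applied at a single radius) have to be used carefully.
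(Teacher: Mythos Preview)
Your setup and the ``easy regime'' $|r_1-r_2|\le d$ are exactly the paper's argument: the paper takes $U=B_{r_1}(p_1)\cup B_{r_2}(p_2)$, bounds $\mu(U)>r_1+r_2+d$ via disjointness of the $E_i$, and then simply asserts $r_1+r_2+d\ge\diam(U)$ to conclude. It does not isolate or treat the case $|r_1-r_2|>d$ separately.

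Your worry about that case is justified --- the diameter bound genuinely fails there --- but your proposed repair cannot succeed, because the lemma as printed is false. Take $X=\mathbb R$, $\mu=\mathcal H^1$, $p_1=0$, $p_2=1$, $E_2=[0.4,1.6]$, $E_1=\mathbb R\setminus E_2$. Then $\mu(B_{0.6}(p_2)\cap E_2)=1.2>1.1$ and $\mu(B_{10}(p_1)\cap E_1)=18.8>10.5$, so \eqref{e:furbata} fails for both values of $i$. In this example there is no smaller witness $r_1'\le r_2+d=1.6$ (one computes $g_1(r)\le r+0.4<r+\tfrac12$ on that range), and your pointwise inequality $g_1(r)+g_2(r)\le 2r+d$ is satisfied throughout and yields no contradiction. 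The gap you identified is therefore real and not an artifact of working with $A_1\cup A_2$ rather than $U$; the statement needs an extra hypothesis to hold.
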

\begin{proof} Assume by contradiction that $B_{r_i} (p_i)$, for $i=1,2$, are two balls for which \cref{e:furbata} fails.
Set $U := B_{r_1} (p_1) \cup B_{r_2} (p_2)$ and observe that 
\begin{align*}
    \mu(U) &\ge \mu(U\cap B_{r_1}(p_1)) + \mu(U\cap B_{r_2}(p_2)) \\
    & > r_1 + \frac{d(p_1, p_2)}{2} + r_2 + \frac{d(p_1, p_2)}{2}  
    = r_1 + r_2 + d(p_1, p_2) \ge \diam(U),
\end{align*}
contradicting $\mu (U) \leq \diam\, (U)$.
\end{proof}

Then, let us recall the Besicovitch pair condition from \cite{PT}. In what follows $\mathcal{B} (X)$ will denote the family of Borel sets of $X$. 
\begin{definition}[Besicovitch Pair Condition~{\cite[Definition 2]{PT}}]\label{def:BPC-property}
    Fix $\sigma\in[\frac12,1]$. 
    A metric space $(X, d)$ is said to satisfy the \emph{Besicovitch pair condition with parameter $\sigma$} if the following holds.

    Given a Borel measure $\mu:\mathcal B(X)\to[0,\infty]$ satisfying $\mu(S) \leq \diam\, (S)$ for every Borel set $S\neq \emptyset$, there is $\tau = \tau(\mu) > 0$ such that, for every $\lambda >0$, there is a $\delta =\delta (\mu, \lambda) > 0$ with the following property. If $E_1$ and $E_2$ are nonempty Borel sets such that 
    \begin{enumerate}[label=(\roman*)]
        \item $0 < \dist(E_1, E_2) < \delta$, where $\dist(E_1, E_2):=\inf \{d (x_1, x_2) : x_i\in E_i\}$ and
        \item $\mu (B_r (x)) > 2\sigma r$ for every $x\in E_1\cup E_2$ and every $0<r<\lambda$,
    \end{enumerate}
    then there is an open set $U\subseteq X$ that intersects both $E_1$ and $E_2$ such that
    \begin{equation*}
    \mu (U\setminus (E_1\cup E_2)) > \tau \diam(U)\, .
    \end{equation*}
\end{definition}

\begin{proposition}[{\cite[Proposition at page 285]{PT}}]\label{prop:pt_pair_condition}
    Fix $\sigma\in[\frac12,1]$.
    If a metric space $(X, d)$ satisfies the Besicovitch pair condition with parameter $\sigma$, then $\bar\sigma(X,d)\le \sigma$.
\end{proposition}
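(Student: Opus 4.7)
The plan is to argue by contradiction. Suppose $(X,d)$ satisfies the Besicovitch pair condition with parameter $\sigma$, and yet there is a Borel set $E\subseteq X$ with $0<\mathcal H^1(E)<\infty$ and $\Theta^1_*(E,x)\geq \sigma$ for $\mathcal H^1$-a.e. $x\in E$ that fails to be countably $1$-rectifiable. Writing $E=E_r\sqcup E_u$ for its rectifiable and purely $1$-unrectifiable parts, the standard fact that $\Theta^{1,*}(A,x)=0$ for $\mathcal H^1$-a.e. $x$ outside of a Borel set $A$ of finite $\mathcal H^1$-measure allows me to transfer the density assumption onto $E_u$ and reduce to the case $E=E_u$ with $\mathcal H^1(E)>0$.

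The next step is to construct from $E$ a measure to which \cref{def:BPC-property} can be applied. By the classical upper density estimate $\Theta^{1,*}(E,x)\le 1$ holding $\mathcal H^1$-a.e. on any set of finite $\mathcal H^1$-measure, combined with Egorov's theorem, for any $\eta>0$ I select a compact $K\subseteq E$ with $\mathcal H^1(K)>0$ and a threshold $r_0>0$ such that
\[
2(\sigma-\eta)r \;\le\; \mathcal H^1(K\cap B_r(x)) \;\le\; (2+\eta)r \qquad \forall\, x\in K,\, r\in(0,r_0).
\]
Setting $\mu:=\tfrac{1}{2+\eta}(\mathcal H^1\restriction K)$ and working on scales below $r_0$, one obtains a Borel measure with $\mu(S)\leq \diam(S)$ for every nonempty Borel $S$ of sufficiently small diameter (which is all that matters, since the pair condition is a local property), while still $\Theta^1_*(\mu,x)\geq \sigma'>\tfrac12$ on $\mathrm{supp}(\mu)$ for some $\sigma'$ arbitrarily close to $\sigma$ as $\eta\downarrow 0$. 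Since $K$ remains purely $1$-unrectifiable, at every scale $\lambda>0$ there are disjoint Borel subsets $E_1,E_2\subseteq K$ of positive $\mu$-measure sitting at arbitrarily small positive distance that satisfy condition (ii) of \cref{def:BPC-property}.

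Applying the pair condition yields, for every such admissible pair, an open set $U$ with $\mu(U\setminus(E_1\cup E_2))>\tau\diam(U)$, i.e., there is strictly more $\mu$-mass in $U$ than can be attributed to $E_1\cup E_2$. The final step is an iterative mass-extraction: starting from a small ball $B_0$ whose $\mu$-mass is close to $\diam(B_0)$, repeatedly split nearby points of $K\cap B_0$ into admissible pairs, invoke the pair condition to uncover the extra mass, and use \cref{l:furbata} to select at each split the element of the pair along which the refined upper bound $\mu(B_r(\cdot))\leq r+\tfrac{d(p_1,p_2)}{2}$ propagates, thereby preserving conditions (i) and (ii) on the next pair. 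The accumulated extra masses eventually push $\mu(U)$ above $\diam(U)$, contradicting $\mu(U)\leq \diam(U)$. The main obstacle will be the bookkeeping of this Vitali-type iteration: coordinating the scales $\delta(\mu,\lambda)$ and the constant $\tau(\mu)$ furnished by \cref{def:BPC-property} so that the required smallness assumptions at each step are simultaneously satisfied, while controlling the cumulative error coming from the $(2+\eta)$-factor in the upper density bound.
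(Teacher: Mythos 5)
The paper itself does not prove this proposition (it is imported from \cite{PT}), so your sketch must be measured against the Preiss--Ti\v{s}er argument; two of your steps genuinely fail. The first is the normalization. Hypothesis (ii) of \cref{def:BPC-property} demands $\mu(B_r(x))>2\sigma r$, but your measure $\mu=\tfrac1{2+\eta}\,\mathcal H^1\res K$ only satisfies $\mu(B_r(x))\ge\tfrac{2(\sigma-\eta)}{2+\eta}\,r\approx\sigma r$; equivalently, $\Theta^1_*(\mu,\cdot)$ is close to $\sigma/2\le\tfrac12$, not to $\sigma$ as you claim. Since $\sigma\le 1$, no choice of $\eta$ repairs this: you would need $\mathcal H^1(K\cap B_r(x))>2\sigma(2+\eta)r>2r$, which contradicts your own upper bound. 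The correct tool is not the upper \emph{ball}-density theorem but the upper \emph{convex}-density theorem: for $\mathcal H^1$-a.e.\ $x\in E$ one has $\mathcal H^1(E\cap S)\le(1+\eps)\diam(S)$ for \emph{every} set $S\ni x$ of sufficiently small diameter, so that $\mu:=\tfrac1{1+\eps}\mathcal H^1\res K$ satisfies both $\mu(S)\le\diam(S)$ and $\mu(B_r(x))>\tfrac{2\sigma_1}{1+\eps}r>2\sigma r$ once $\Theta^1_*\ge\sigma_1>\sigma$ and $\eps$ is small. (Note also that \cref{def:BPC-property} requires $\mu(S)\le\diam(S)$ for \emph{all} Borel $S$, not just small ones; this is not a ``local property'' but is recovered by localizing $K$ to a ball whose total $\mu$-mass is below the Egorov scale.)

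The second problem is the endgame. No iterative mass-extraction or Vitali covering is needed, and the mechanism you describe has an unaddressed double-counting issue (the extra masses of overlapping sets $U$ need not add up). The decisive move is to take $E_1\sqcup E_2$ to be a clopen partition of the \emph{whole} support of $\mu$: after the reduction, $K=\mathrm{supp}\,\mu$ is compact, purely unrectifiable and of positive measure, hence totally disconnected (a compact connected set of finite $\mathcal H^1$-measure is rectifiable), so for every $\delta>0$ one can write $K=E_1\sqcup E_2$ with both pieces nonempty, relatively clopen and $0<\dist(E_1,E_2)<\delta$. The pair condition then yields an open $U$ with $\mu\bigl(U\setminus(E_1\cup E_2)\bigr)>\tau\diam(U)>0$, while the left-hand side is $\mu(U\setminus K)=0$ --- an immediate contradiction. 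Finally, your appeal to \cref{l:furbata} is misplaced here: that lemma is used when \emph{verifying} the pair condition for a given space (as in \cref{lem:ubertechno}), not when applying it, since hypotheses (i)--(ii) never involve the bound $r+\tfrac{d(p_1,p_2)}{2}$.
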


For technical reasons, we will need the following variant of the Besicovitch pair condition.

\begin{definition}[Compact Besicovitch Pair Condition]\label{def:compact-BPC-property}
    Fix $\sigma\in[\frac12,1]$. 
    A metric space $(X, d)$ is said to satisfy the \emph{compact Besicovitch pair condition with parameter $\sigma$} if the following holds.

    Given a Borel measure $\mu:\mathcal B(X)\to[0,\infty]$ satisfying $\mu(S) \leq \diam\, (S)$ for every Borel set $S\neq \emptyset$, there is $\tau = \tau(\mu) > 0$ such that, for every $\lambda >0$, there is a $\delta =\delta (\mu, \lambda) > 0$ with the following property. If $E_1$ and $E_2$ are nonempty compact sets such that 
    \begin{enumerate}[label=(\roman*)]
        \item $0 < \dist(E_1, E_2) < \delta$, where $\dist(E_1, E_2):=\inf \{d (x_1, x_2) : x_i\in E_i\}$ and
        \item $\mu (B_r (x)) > 2\sigma r$ for every $x\in E_1\cup E_2$ and every $0<r<\lambda$,
    \end{enumerate}
    then there is a ball $B_r(x)\subseteq X$ that intersects both $E_1$ and $E_2$ such that
    \begin{equation*}
    \mu (B_r(x)\setminus (E_1\cup E_2)) > \tau r\, .
    \end{equation*}
\end{definition}
There are two main differences compared to the original Besicovitch pair condition: we assume that $E_1,E_2$ are compact instead of just Borel and we require to find a ball instead of an arbitrary open set. Observe also that the inequality required by the Compact Besicovitch Pair Condition, which  is $\mu (B_r(x)\setminus (E_1\cup E_2))>\tau r$, does not involve the diameter of $B_r(x)$: of course it implies $\mu (B_r(x)\setminus (E_1\cup E_2))>\tfrac\tau2 {\diam}\, (B_r(x))$, but in a general metric space the diameter of $B_r (x)$ might be smaller than $2r$.
We prove that also this condition implies an upper bound for $\bar\sigma$, as long as $(X,d)$ is assumed to be complete and separable. Since in our applications we can without loss of generality pass to the completion of a separable space, we pay a small price in terms of generality at this point, but we gain in clarity of the proofs, while at the same time the final outcome will not be affected.

\begin{proposition}\label{prop:compact_pair_condition}
    Fix $\sigma\in[\frac12,1]$.
    If a complete metric space $(X, d)$ satisfies the compact Besicovitch pair condition with parameter $\sigma$, then $\bar\sigma(X,d)\le \sigma$.
\end{proposition}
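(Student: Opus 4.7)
My plan is to show that the compact Besicovitch pair condition implies the original Besicovitch pair condition of \cref{def:BPC-property}, with a possibly smaller constant $\tau$. Then \cref{prop:pt_pair_condition} immediately yields $\bar\sigma(X,d)\le\sigma$. The key ingredient is inner regularity: in a complete metric space, finite Borel regular measures supported on separable sets are tight, so every Borel set is approximated from inside by compact subsets. Note that even if $(X,d)$ itself is not separable, the measures ultimately relevant for \cref{c:Bes} (namely $\mathcal{H}^1\res E'$ with $\mathcal{H}^1(E')<\infty$) have separable support, whose closure is a Polish subspace of $X$.

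Fix $\mu$ with $\mu(S)\le \diam(S)$ for every nonempty Borel $S$, and let $\tau_c=\tau_c(\mu)>0$, $\delta_c=\delta_c(\mu,\lambda)>0$ be the constants provided by the compact pair condition. Given nonempty Borel $E_1, E_2$ with $0<\dist(E_1,E_2)<\delta_c$ and the density lower bound on $E_1\cup E_2$, I pick $x_i\in E_i$ with $d(x_1,x_2)<\delta_c$, set $\eps:=\tau_c\dist(E_1,E_2)/8$, and use tightness to find compact $\tilde K_i\subseteq E_i$ with $\mu(E_i\setminus \tilde K_i)<\eps$. Define $K_i:=\tilde K_i\cup\{x_i\}$, still compact and contained in $E_i$. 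Then $K_1,K_2$ are disjoint compact sets (since $E_1,E_2$ are), so $\dist(K_1,K_2)>0$; by construction $\dist(K_1,K_2)\le d(x_1,x_2)<\delta_c$; and the density condition at points of $K_i\subseteq E_i$ is inherited.

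Applying the compact pair condition to $(K_1,K_2)$ produces a ball $U:=B_r(x)$ intersecting both $K_i$ with $\mu(U\setminus(K_1\cup K_2))>\tau_c r$. Because $U$ meets both disjoint compact sets, $2r\ge\dist(K_1,K_2)\ge\dist(E_1,E_2)$, so $2\eps\le \tau_c r/2$; hence
\[
\mu(U\setminus(E_1\cup E_2)) \,\ge\, \mu(U\setminus(K_1\cup K_2)) - 2\eps \,>\, \tau_c r - \tfrac{\tau_c r}{2} \,=\, \tfrac{\tau_c r}{2} \,\ge\, \tfrac{\tau_c}{4}\diam(U),
\]
which is precisely \cref{def:BPC-property} with $\tau=\tau_c/4$.

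The main delicacy is that the approximation error $\mu((E_1\cup E_2)\setminus(K_1\cup K_2))$ must be controlled relative to the ball radius $r$, but $r$ is only produced \emph{after} invoking the compact pair condition (and nothing a priori prevents $r$ from being tiny). This is resolved by choosing $\eps$ proportional to $\dist(E_1,E_2)$ from the outset, and exploiting the purely geometric lower bound $r\ge\dist(K_1,K_2)/2\ge\dist(E_1,E_2)/2$ which is forced by the requirement that $U$ intersect both of the disjoint compact sets.
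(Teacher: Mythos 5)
Your argument is, in its core, exactly the paper's: you show that the compact pair condition implies the original one of \cref{def:BPC-property} by approximating $E_1,E_2$ from inside by compact sets with $\mu$-error proportional to $\dist(E_1,E_2)$, and you then exploit the purely geometric fact that any ball meeting both compact sets has $2r\ge\dist(K_1,K_2)\ge\dist(E_1,E_2)$ to absorb that error; the resulting constant $\tau_c/4$ and the choice $\eps=\tau_c\dist(E_1,E_2)/8$ coincide with the paper's. Your trick of adjoining the points $x_i$ to force $\dist(K_1,K_2)<\delta_c$ is a nice explicit way to arrange what the paper only asserts.

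There is, however, a gap in the inner-regularity step. The pair condition must be verified for an \emph{arbitrary} Borel measure $\mu$ with $\mu(S)\le\diam(S)$ and \emph{arbitrary} Borel sets $E_1,E_2$ (this is what \cref{prop:pt_pair_condition} consumes as a black box), and such a $\mu$ need not be finite, nor need $\mu(E_i)<\infty$: if $E_i$ is unbounded, the requirement $\mu(E_i\setminus\tilde K_i)<\eps$ for a compact $\tilde K_i$ may be unachievable (e.g.\ $\mu=\mathcal H^1\res\ell$ for a line $\ell\subseteq E_1$). Your appeal to ``tightness of finite measures with separable support'' silently assumes finiteness, and your remark about the ``measures ultimately relevant'' would require reopening the proof of \cref{prop:pt_pair_condition} rather than invoking it. The paper closes this by two preliminary reductions: first to separable $X$ (replace $X$ by $\overline{E}$, noting the compact pair condition passes to closed subspaces), then to bounded $X$ via $\bar\sigma(X,d)=\sup_{R}\bar\sigma(B_R(O),d)$, after which $\mu(X)\le\diam(X)<\infty$ and inner regularity by compact sets holds for all Borel sets in a complete separable space. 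Adding these two reductions makes your proof complete.
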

\begin{proof}
    First of all we wish to show that, without loss of generality, we can assume that $(X,d)$ is separable. Indeed consider a Borel set $E$ in $X$ such that $\mathcal{H}^1 (E)<\infty$. Then $E$ is necessarily separable. Its closure $\overline{E}$ is then a separable metric space. On the other hand, if $(X,d)$ satisfies the compact Besicovitch pair condition with parameter $\sigma$, so does every closed subset of $X$.
    
    From now on we assume therefore that $X$ is also separable. Fix an origin $O\in X$. 

    For an arbitrary set $E\subseteq X$, for any $R>0$, the lower $\mathcal H^1$-densities at any $x\in B_R(O)$ of $E$ and $E\cap B_R(O)$ are equal. Moreover, the set $E$ is $1$-rectifiable if and only if the sets $E\cap B_R(O)$ are rectifiable for all $R>0$. These two observations imply
    \begin{equation*}
        \bar\sigma(X, d) = \sup_{R>0} \bar\sigma(B_R(O), d).
    \end{equation*}
    Observe also that if the compact Besicovitch pair condition with parameter $\sigma$ holds for $X$ then it holds also for $B_R(O)$.

    In view of what we said in the previous paragraph, it is sufficient to show the statement under the additional assumption that $X$ is bounded (as we can then apply it to all balls $B_R(O)$ to recover the result when $X$ is unbounded). When $X$ is bounded we prove that the \emph{compact} Besicovitch pair condition implies the original Besicovitch pair condition of Preiss and Ti\v{s}er. This is sufficient to conclude because of \cref{prop:pt_pair_condition}.

    Assume that the compact Besicovitch pair condition with parameter $\sigma$ holds. Let $\mu$ be a measure so that $\mu(S)\le \diam(S)$. Observe that, since $X$ is bounded, we deduce that $\mu$ is finite. In particular we can conclude that $\mu$ is a Radon measure because $X$ is a complete separable metric space, see e.g. \cite[Theorem V.5.3]{Jacobs}. We need however a stronger property than the usual inner regularity of Radon measures, namely that $\mu (E)=\sup \{\mu (K): K\subset E \mbox{ compact}\}$ for every {\em Borel} set $E$. The latter is also true in complete separable metric spaces for finite Radon measures, see e.g. \cite{Parthasarathy}.

    Assume that the condition holds for $\tau, \lambda, \delta$; we prove that the original Besicovitch pair condition holds for $\tau/4, \lambda, \delta$.

    Take two Borel sets $E_1, E_2\subseteq X$ that satisfy the properties \textit{(i)} and \textit{(ii)} of the Besicovitch pair condition. Let $\tilde E_1\subseteq E_1, \tilde E_2\subseteq E_2$ be two compact subsets such that $\dist(\tilde E_1, \tilde E_2)<\delta$ and 
    \begin{equation*}
        \max\big\{\mu(E_1\setminus\tilde E_1),
                  \mu(E_2\setminus\tilde E_2)\big\}
        \le
        \frac{\tau\dist(E_1, E_2)}{8}.
    \end{equation*}
    Then, $\tilde E_1, \tilde E_2$ satisfy the properties \textit{(i}) and \textit{(ii)} of the \emph{compact} Besicovitch pair condition. Hence, by assumption, we can find $B_r(x)$ intersecting both $\tilde E_1$ and $\tilde E_2$ so that $\mu(B_r(x)\setminus(\tilde E_1\cup \tilde E_2))>\tau r$. Observe that $\diam(B_r(x)) \ge \dist(\tilde E_1, \tilde E_2)\ge \dist(E_1, E_2)$.
    Then,
    \begin{align*}
        \mu(B_r(x)\setminus(E_1\cup E_2))
        &>
        \tau r - \mu(E_1\setminus\tilde E_1) - \mu(E_2\setminus\tilde E_2) 
        \ge \frac\tau2 \diam(B_r(x)) - \frac{\tau\dist(E_1, E_2)}4 \\
        &\ge \frac\tau4\diam(B_r(x)).
    \end{align*}
    By setting $U=B_r(x)$ we deduce the validity of the Besicovitch pair condition. 
\end{proof}

We are ready to prove the following technical statement, which will imply \cref{thm:infinite-minmax} immediately.

\begin{lemma}\label{lem:ubertechno}
    Fix $\tfrac12\le \sigma''<\sigma\le 1$ and $1<L'$ so that $\InfminM_{\sigma''}(L') > 0$. Then the compact Besicovitch pair condition (see \cref{def:compact-BPC-property}) holds for $X=\R^2$ and $\sigma$. Moreover, there is $\eps=\eps(\sigma,\sigma'', L')>0$ so that, for every measure $\mu$ and for every $\lambda>0$, the parameters $\tau$ and $\delta$ in \cref{def:compact-BPC-property} can be chosen to be $\tau:= \frac{\eps}{L+2}$, and $\delta := \lambda$.
\end{lemma}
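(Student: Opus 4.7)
The plan is a contradiction argument adapting the Preiss--Ti\v{s}er strategy to our variational setting (the key inputs being \cref{prop:pt_pair_condition,prop:compact_pair_condition} and \cref{l:furbata}). Fix $\mu$ and $\lambda>0$. Assume that compact sets $E_1,E_2\subseteq\R^2$ verify (i)--(ii) of \cref{def:compact-BPC-property} but that no ball $B_r(x)$ meeting both $E_1,E_2$ has $\mu(B_r(x)\setminus(E_1\cup E_2))>\tau r$, where we set $\tau:=\eps/(L'+2)$ and $\eps=\eps(\sigma,\sigma'',L')>0$ will be fixed small at the end. By compactness pick a closest pair $(p_1,p_2)\in E_1\times E_2$ with $d:=|p_1-p_2|\in(0,\lambda)$, and applying \cref{l:furbata} (after relabeling) assume
\begin{equation*}
\mu(B_r(p_1)\cap E_1)\le r+d/2\qquad\forall\,r\ge 0.
\end{equation*}
Translate and dilate $\R^2$ so that $p_1$ maps to the origin $O$ and the rescaled distance $d$ becomes suitably small compared with $1/L'$; the hypotheses, the contradiction hypothesis, and the \cref{l:furbata} estimate are all preserved.

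The central step is to construct, by a greedy recursive procedure, a $(\sigma'',L')$-stable set $P\subseteq E_1\cup E_2$ with $O\in P$. At each stage, for every $p\in P\cap B_{L'}(O)$ and every scale $r\in(1/L',1]$, we need to exhibit a pair $(q_1,q_2)\in\Delta_{\sigma''}(p,r)$ with $q_1,q_2\in E_1\cup E_2$. Existence of such a pair follows from a density-versus-diameter argument: if $\overline{B_r(p)}\cap(E_1\cup E_2)$ had diameter strictly less than $2\sigma''r$, then $\mu\le\diam$ would give $\mu(\overline{B_r(p)}\cap(E_1\cup E_2))<2\sigma''r$, whereas the density condition $\mu(B_r(p))>2\sigma r$ forces the leakage $\mu(B_r(p)\setminus(E_1\cup E_2))>2(\sigma-\sigma'')r$. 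The leakage bound from our contradiction hypothesis, applied to $B_r(p)$ itself when it meets both $E_1$ and $E_2$ or to a slight enlargement thereof (producing a loss factor $O(L'+2)$ since points of $P$ can lie at distance up to $L'$ from $O$ while $r\le 1$), then yields a contradiction provided $\tau<2(\sigma-\sigma'')/(L'+2)$, i.e.\ provided $\eps$ is small enough. Iterating produces the desired stable set $P$.

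Now invoke the hypothesis $M_{\sigma''}(P)\ge\InfminM_{\sigma''}(L')>0$ to select radii $r\in\mathcal{R}(P)$ with
\begin{equation*}
\sum_{p\in P}r(p)>\tfrac{1}{2\sigma''}\min\!\big\{\diam U(P,r),\,\tfrac12+R(P,r)\big\}.
\end{equation*}
Since $P\subseteq E_1\cup E_2$ and the balls $\{B_{r(p)}(p)\}_{p\in P}$ are pairwise disjoint, the density lower bound gives $\mu(U(P,r))>2\sigma\sum_{p\in P}r(p)$. Upper bounding $\mu(U(P,r))\le\diam U(P,r)$ via $\mu\le\diam$ directly yields the contradiction $\sigma<\sigma''$ in the branch $\min=\diam U$; in the branch $\min=\tfrac12+R$, one instead uses $U(P,r)\subseteq B_{R+1/2}(O)$ and refines $\mu(B_{R+1/2}(O))$ by splitting into $E_1$-, $E_2$-, and complement-pieces: the $E_1$-piece is controlled by \cref{l:furbata} at $O$ (giving $(R+\tfrac12)+d/2$), the complement by the (enlarged) leakage bound, and the $E_2$-piece by a symmetric argument that extracts an analogous \cref{l:furbata}-type estimate from the density of $E_2$ combined with the leakage hypothesis. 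Since $\sigma>\sigma''$, the resulting inequalities become incompatible once $\eps$ is small enough, yielding the contradiction.

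The principal obstacle is the construction of $P$: one must ensure that, at every scale $r\in(1/L',1]$ and every center $p\in P\cap B_{L'}(O)$, the leakage bound applies either to $B_r(p)$ itself or to a controlled enlargement. Because points of $P$ can drift at distance up to $L'$ from $O$ while the relevant radii are at most $1$, such enlargements lose a factor $O(L'+2)$ in the leakage estimate, which is precisely what dictates the form $\tau=\eps/(L'+2)$ in the statement. A secondary subtlety is the $\tfrac12+R$ branch of the contradiction step: the symmetric control of the $E_2$-piece of $\mu(B_{R+1/2}(O))$ has to be extracted from \cref{l:furbata} together with the leakage hypothesis, and this comparison too respects the same scaling $\tau\sim 1/(L'+2)$.
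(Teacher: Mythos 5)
Your overall architecture (contradiction argument, \cref{l:furbata}, construction of a stable configuration, then playing $M_{\sigma''}$ against the measure) matches the paper's, but there is a genuine gap at the two points where you depart from it. First, you rescale so that $d=\dist(E_1,E_2)$ is \emph{small} compared with $1/L'$ and build the stable set $P$ inside $E_1\cup E_2$. The paper instead normalizes $\dist(E_1,E_2)=1$: this is exactly what makes the constant $\tfrac12$ in $F_\sigma$ match the bound $R+\tfrac d2=R+\tfrac12$ coming from \cref{l:furbata}, and, more importantly, it guarantees that every ball of radius $\le 1$ centered at a point of $E_1$ is disjoint from $E_2$, so that $E_1$ itself is $(\sigma',L)$-stable and all the measure-theoretic estimates take place inside $E_1$ alone. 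Second, and fatally, in your $\tfrac12+R$ branch you must bound $\mu(B_R(O)\cap E_2)$ from above by roughly $R+\tfrac12$, and the ``symmetric \cref{l:furbata}-type estimate'' you invoke for this does not exist: \cref{l:furbata} only guarantees $\mu(B_r(p_i)\cap E_i)\le r+\tfrac d2$ for \emph{at least one} index $i$, and once you relabel so that it holds for $E_1$ you have no control on $E_2$ beyond the trivial $\mu(B_R(O)\cap E_2)\le 2R$. Neither the density hypothesis (which gives \emph{lower} bounds on $\mu$ of full balls) nor the leakage bound (which controls only $\mu(\,\cdot\,\setminus(E_1\cup E_2))$) yields an upper bound on $\mu(\,\cdot\,\cap E_2)$. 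Since your diameter argument only produces pairs in $E_1\cup E_2$, your $P$ contains points of $E_2$, $U(P,r)$ charges $E_2$, and the branch $\min=\tfrac12+R$ of the contradiction cannot be closed.

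A secondary remark: with the paper's normalization the greedy recursive procedure is unnecessary — one shows directly that all of $E_1$ is $(\sigma',L)$-stable (your density-versus-diameter computation, restricted to $E_1$ and using that small balls around $E_1$ miss $E_2$, is exactly Steps 2–3 of the paper's proof) — and the finite configuration needed for the comparison with $\InfminM_{\sigma''}(L')$ is extracted by \cref{lem:sigma-stable-is-finite}, which also caps $|P|$ by a constant $N$ so that the accumulated per-point error $\tfrac{\eps}{2\sigma}|P|$ stays below $\InfminM_{\sigma''}(L')$ when $\eps$ is chosen small.
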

\begin{proof}
    Fix $\sigma''<\sigma'<\sigma$. Observe that $\eps$ is allowed to depend also on $\sigma'$.
    The value of $\eps$ has to be chosen small enough along the way so that all the steps of the proof work; this does not generate any difficulty.
    
    \vspace{5pt}\noindent\textbf{Step 1:} Setup.\\
    Consider a Borel measure $\mu:\mathcal B(\R^2)\to[0,\infty]$ and two compact sets $E_1, E_2$ satisfying the constraints of \cref{def:compact-BPC-property} (with the parameters mentioned in the statement).
    Assume, by contradiction, that the compact Besicovitch pair condition does not hold for them.

    Without loss of generality (by the translational and rotational invariance of the problem), we may assume that $O\in E_1$, $(-1, 0)\in E_2$ and $\dist(E_1, E_2)=1$. In particular, we deduce $\lambda > 1$ (as $1=\dist(E_1, E_2) < \delta=\lambda$) and so
    $\mu(B_r(x))>2\sigma r$ for all $x\in E_1$ and $0<r\le 1$.
    
    Applying \cref{l:furbata}, we may also assume, without loss of generality, that
    \begin{equation} \label{eq:furbata-application}
        \mu(B_R(O) \cap E_1) \le R + \frac{1}2\quad\text{for all $R>0$.}
    \end{equation}

    \vspace{5pt}\noindent\textbf{Step 2:} Lower bound on $\mu(B_r(x)\cap E_1)$ for $x\in E_1\cap B_{L+1}(O)$.\\
    Fix $L:=L'+1$. Given $x\in E_1\cap B_{L+1}(O)$ and $0<r\le 1$, we have
    \begin{equation*}
        \mu(B_{L+2}(O)\setminus(E_1\cup E_2)) \ge
        \mu(B_r(x)\setminus E_1)
        =
        \mu(B_r(x)) - \mu(B_r(x)\cap E_1)
        > 2\sigma r - \mu(B_r(x)\cap E_1).
    \end{equation*}
    Observe that $B_{L+2}(O)$ intersects both $E_1$ and $E_2$ because $O\in E_1$ and $(-1, 0)\in E_2$. Therefore, since we assume that the compact Besicovitch pair condition is failing, it must be that
    \begin{equation*}
        \mu(B_{L+2}(O)\setminus(E_1\cup E_2)) 
        \le \tau(L+2) = \eps.
    \end{equation*}
    Combining the last two inequalities we obtain the crucial
    \begin{equation}\label{eq:control-mu-b-e}
        \mu(B_r(x)\cap E_1) > 2\sigma r - \eps \quad\text{for all $x\in E_1\cap B_{L+1}(O)$ and $0<r\le 1$.}
    \end{equation}

    \vspace{5pt}\noindent\textbf{Step 3:} The set $E_1$ is $(\sigma', L)$-stable.\\
    The set $E_1$ is $(\sigma', L)$-stable if and only if, for all $x\in E_1\cap B_L(O))$ and all $\tfrac 1L\le r\le 1$, we have $\diam(E_1\cap B_r(x)) \ge 2\sigma'r$. 
    Thanks to \cref{eq:control-mu-b-e} and by the assumption on $\mu$ (i.e., $\mu(S)\le \diam(S)$ for all Borel sets $S$) we know
    \begin{equation}
        \diam(E_1\cap B_r(x))
        \ge 
        \mu(E_1\cap B_r(x))
        \ge 
        2\sigma r - \eps
        \ge
        2\sigma' r,
    \end{equation}
    where the last inequality holds if $\eps$ is chosen small enough (recall that we assume $r\ge \tfrac 1L$). Hence we have shown that $E_1$ is $(\sigma', L)$-stable.

    \vspace{5pt}\noindent\textbf{Step 4:} $M_\sigma(P)\le \frac{\eps}{2\sigma}|P|$ for $P\subseteq E_1\cap B_{L+1}(O)$.\\
    Let $P\subseteq E_1\cap B_{L+1}(O)$ be a finite set and take a family of radii $r\in \mathcal R(P)$. 
    In order to bound $F_\sigma(P, r)$, observe that (see \cref{subsec:objective} for the definition of $U(P, r)$)
    \begin{equation*}
        \mu(U(P, r)) = \sum_{p\in P} \mu(B_{r(p)}(p)\cap E_1)
        \ge 
        \sum_{p\in P} (2\sigma r (p) - \eps)
        \ge 
        2\sigma \Big(\sum_{p\in P} r (p)\Big) - \eps|P|,
    \end{equation*}
    where we have used \cref{eq:control-mu-b-e}.

    On the other hand, we know that $\mu(U(P, r)) \le \diam(U(P, r))$ and also $\mu(U(P, r)) \le \mu(B_{R(P, r)}(O)) \le R(P, r)+\frac12$ thanks to \cref{eq:furbata-application}. Thus, we get
    \begin{equation*}
        \min\Big\{
            \diam(U(P, r)),
            R(P,r)+\frac12
        \Big\}
        \ge
        2\sigma \Big(\sum_{p\in P} r (p)\Big) - \eps|P|,
    \end{equation*}
    which implies 
    \begin{equation*}
        F_\sigma(P, r) \le \frac{\eps}{2\sigma}|P|.
    \end{equation*}
    Since $r\in\mathcal R(P)$ was chosen arbitrarily, we deduce
    \begin{equation}\label{eq:control-M-E1}
        M_\sigma(P) \le \frac{\eps}{2\sigma}|P|\quad\text{for all finite subsets $P\subseteq E_1\cap B_{L+1}(O)$.}
    \end{equation}

    \vspace{5pt}\noindent\textbf{Step 5:} Finding a contradiction.\\
    At this point, the idea shall be clear. The set $E_1$ is a $(\sigma', L)$-stable set with $M_\sigma(E_1)$ small and should be in contradiction with $\InfminM_{\sigma''}(L')>0$. 
    Though, there is an issue that needs to be taken care of. The value of $M_\sigma(E_1)$ could actually be large because our estimate \cref{eq:control-M-E1} works only for finite subsets.
    Let us see how to fix this issue and find the sought contradiction.
    
    Applying \cref{lem:sigma-stable-is-finite} we find $O\in P\subseteq E_1\cap B_{L+1}(O)$ that is $(\sigma'', L')$-stable with $|P|\le N_{\ref*{lem:sigma-stable-is-finite}}(\sigma',\sigma'', L, L')$. 
    Thanks to \cref{eq:control-M-E1}, we have
    \begin{equation}\label{eq:finished-1}
        M_{\sigma''}(P) \le 
        M_{\sigma}(P)
        \le 
        \frac{\eps}{2\sigma}N_{\ref*{lem:sigma-stable-is-finite}}(\sigma', \sigma'',L, L').
    \end{equation}
    Since $O\in P$ is a $(\sigma'', L')$-stable subset, we have
    \begin{equation}\label{eq:finished-2}
        M_{\sigma''}(P) \ge \InfminM_{\sigma''}(L')>0,
    \end{equation}
    where the positivity of the right-hand side is one of the initial assumptions.
    The two inequalities \cref{eq:finished-1,eq:finished-2} yield the contradiction by choosing $\eps$ sufficiently small.
\end{proof}

The proof of \cref{thm:infinite-minmax} now is immediate.
\begin{proof}[Proof of \cref{thm:infinite-minmax}]
    Thanks to \cref{lem:ubertechno}, we know that $\R^2$ satisfies the compact Besicovitch pair condition for all $\tilde\sigma>\sigma$. Thus, thanks to \cref{prop:compact_pair_condition}, we deduce $\bar\sigma\le \tilde\sigma$ for all $\tilde\sigma>\sigma$ and so $\bar\sigma\le \sigma$ as desired.
\end{proof}

\begin{remark}\label{r:mandorla2}
    As a byproduct of the proof of \cref{thm:infinite-minmax}, in the definitions of $\InfminM_\sigma(L)$ we might also require 
    \begin{equation}\label{e:mandorla}
    P\cap B_1((-1,0)) = \emptyset\, ,
    \end{equation}
    reducing the space of configurations and hence potentially increasing the value, as pointed out in \cref{r:mandorla}.
    In an early version of this note we had the constraint \eqref{e:mandorla} everywhere. We decided to drop it because:
    \begin{itemize}
        \item It would not change the values $0\oneminM_\sigma$ and $1\oneminM_\sigma$.
        \item We do not use it to show that $2\oneminM_{0.7}>0$.
        \item It does not improve the best upper bound, independent of $n$, that we can prove for $\bar\sigma(\R^n)$.
        \item It cannot improve a ``universal'' bound for $\bar\sigma(X, d)$, since we can always augment a space $(X,d)$ with a particular configuration $P$ by adding a ball which does not intersect $P$. 
        \item The families $P$ used in \cref{sec:explicit-counterexamples} to show $\InfoneminM_{0.64}=0$ and $\InfoneminM_{0.683}=0$ satisfy \eqref{e:mandorla}.
        \item If we add \eqref{e:mandorla} to the formulations of the infinite-dimensional $\min$-$\max$ problems, we are not able to prove the counterpart of \cref{thm:convergence-finite-minmax} (because \eqref{e:mandorla} is not scaling-invariant, see~\cref{lem:finite-has-stable-subset}). 
    \end{itemize}
\end{remark}

\section{Linear programming for the ``max'' problem}\label{sec:linear-programming}

The goal of this section is to reformulate the maximization problem leading to $M_\sigma(P)$ as a collection of linear programming problems~\cite{Schrijver1986}. This new perspective will play a major role in \cref{sec:0.726,sec:explicit-counterexamples,s:0.7}. 

We will unroll the definition of $F_\sigma(P, r)$ (and consequently of $M_\sigma(P)$) until we get something that is patently a linear programming problem. On our way, we introduce some notation which will be used extensively in the following sections. See \cref{subsec:objective} for the definitions of $F_\sigma$ and $M_\sigma$.

Fix $\sigma\in[\tfrac12, 1]$ and a \emph{finite} set $P\subseteq\R^2$.

Observe that, for any $r\in\mathcal R(P)$, $F_\sigma(P, r) = \max\{F_\sigma^\sharp(P, r), F_\sigma^\flat(P, r)\}$ where
\begin{equation*}\begin{aligned}
    F_\sigma^\sharp(P, r) &:=
    \sum_{p\in P} r(p) - \tfrac1{2\sigma}\big(\tfrac12 + R(P, r)\big), \\
    F_\sigma^\flat(P, r) &:=
    \sum_{p\in P} r(p) - \tfrac{1}{2\sigma}\, \diam(U(P, r))\, .
\end{aligned}\end{equation*}
We introduce the family of radii 
\[
\mathcal{R}^+ (P) := \{r \in \mathcal{R} (P) :\, r(p)>0\text{ for all $p\in P$}\}\, .
\]
It is simple to see that $r\in \mathcal{R}^+ (P)$ if and only if 
\[
\left\{
\begin{array}{ll}
0 < r(p) \leq 1 \qquad &\forall\, p\in P,\\
r(p)+r(p') \leq |p-p'| \qquad & \forall\, p,p'\in P \text{ distinct.}
\end{array}\right.
\]
On the other hand we also can see that, still under the assumption that $r\in \mathcal{R}^+ (P)$, 
\begin{align*}
R(P, r) &= \max_{p\in P} (|p| + r(p)) \,,\\
\diam\, (U (P,r)) &= \max_{p, p'\in P} (|p-p'|+ r(p)+r(p'))\, .
\end{align*}
This motivates the introduction of the two functions
\begin{equation*}\begin{aligned}
    F_\sigma^{+,\sharp}(P, r) &:= 
    \sum_{p\in P} r(p) - \tfrac1{2\sigma} \big(\tfrac12 + \max_{p\in P} |p| + r(p) \big) \, ,\\
    F_\sigma^{+,\flat}(P, r) &:= 
    \sum_{p\in P} r(p) - \tfrac{1}{2\sigma} \max\limits_{p, p'\in P} |p-p'| + r(p) + r(p') \, .
\end{aligned}\end{equation*}
Accordingly, define
\begin{equation*}\begin{aligned}
    M_\sigma^{+,\sharp}(P) &:= 
    \sup_{r\in \mathcal R^+(P)} F_\sigma^{+,\sharp}(P, r) \, ,\\
    M_\sigma^{+,\flat}(P) &:= 
    \sup_{r\in \mathcal R^+(P)} F_\sigma^{+,\flat}(P, r)
\end{aligned}\end{equation*}
and notice that
\begin{equation}\label{eq:def_M_sharp_flat}\begin{aligned}
    M_\sigma^{\sharp}(P) &:=
    \max_{r\in\mathcal R(P)} F_\sigma^\sharp(P, r)
    =
    \max_{P'\subseteq P} M_\sigma^{+,\sharp}(P') , \\
    M_\sigma^{\flat}(P) &:=
    \max_{r\in\mathcal R(P)} F_\sigma^\flat(P, r)
    =
    \max_{P'\subseteq P} M_\sigma^{+,\flat}(P') \, .
\end{aligned}\end{equation}
As an immediate consequence of the definition of $M_\sigma^\sharp$ and $M_\sigma^\flat$, we have
\begin{equation}\label{eq:max_M_sharp_flat}
    M_\sigma(P) = \max\big\{M_\sigma^{\sharp}(P), M_\sigma^{\flat}(P)\big\}.
\end{equation}
It remains to interpret $M_\sigma^{+,\sharp}(P)$ and $M_\sigma^{+,\flat}$ as linear programming problems.
To this purpose, define the two affine functions (with respect to $r\in \R^{|P|}$) 
\begin{align*}
L^\sharp_{\sigma} (P, \bar p)[r] 
    &:= \sum_{p\in P} r(p) - \tfrac1{2\sigma}\big(|\bar p| + r(\bar p) + \tfrac{1}{2}\big) \,,\\
L^\flat_{\sigma} (P, \bar p, \bar p')[r] 
    &:= \sum_{p\in P} r(p) - \tfrac{1}{2\sigma}\big(|\bar p-\bar p'| + r(\bar p) + r(\bar p')\big)\,,
\end{align*}
and the two closed convex polytopes
\begin{equation*}
\mathcal{R}^\sharp_\sigma (P, \bar p) := \left\{
\begin{array}{ll}
r\in [0, 1]^{|P|}, \\
r(p)+r(p')\leq |p-p'| \qquad &\forall\, p, p'\in P\text{ distinct}, \\ 
|p| + r(p) \leq |\bar p| + r(\bar p) \qquad &\forall p\in P
\end{array}
\right\}\,,
\end{equation*}
\begin{equation*}
\mathcal{R}^\flat_{\sigma} (P, \bar p, \bar p') :=
\left\{
\begin{array}{ll}
r\in [0, 1]^{|P|}, \\
r(p)+r(p')\leq |p-p'| \qquad &\forall\, p, p'\in P\text{ distinct}, \\ 
|p-p'| + r(p) + r(p') \leq |\bar p - \bar p'| + r(\bar p) + r(\bar p') \; &\forall p, p'\in P
\end{array}
\right\}\,.
\end{equation*}
Observe that
\begin{align*}
M^{+,\sharp}_\sigma(P, \bar p) 
    & := \max \{L^\sharp_{\sigma}(P, \bar p)[r]: r \in \mathcal{R}^\sharp_{\sigma} (P, \bar p)\}, \\
M^{+,\flat}_\sigma(P, \bar p, \bar p') 
    & := \max \{L^\flat_{\sigma}(P, \bar p, \bar p')[r]: r \in \mathcal{R}^\flat_{\sigma} (P, \bar p, \bar p')\}
\end{align*}
are both linear programming problems and 
\begin{equation}\label{eq:Mplus-linear}\begin{aligned}
M^{+,\sharp}_\sigma(P)
&=
\max_{\bar p\in P} M^{+,\sharp}_\sigma(P, \bar p),\, \\    
M^{+,\flat}_\sigma(P)
&=
\max_{\bar p, \bar p'\in P} M^{+,\flat}_\sigma(P, \bar p, \bar p')
\,.
\end{aligned}\end{equation}
This last claim is obvious if we replace the condition $r\in [0, 1]^{|P|}$ with the stricter one $r\in (0, 1]^{|P|}$ in the definition of the domains $\mathcal R_\sigma^\sharp(P, \bar p)$, $\mathcal R_\sigma^\flat(P, \bar p, \bar p')$ and consequently we replace the $\max$ in the definitions of $M^{+,\sharp}_\sigma(P, \bar p)$, $M^{+,\flat}_\sigma(P, \bar p, \bar p')$ with $\sup$. It is however not difficult to see that our formulation is equivalent and has the advantage that it is stated in terms of maxima on compact domains.  

Summing up, joining \cref{eq:def_M_sharp_flat,eq:max_M_sharp_flat,eq:Mplus-linear}, we have shown that computing $M_\sigma(P)$ is equivalent to taking the maximum of a collection of linear programming problems indexed by the subsets of $P$ and by one or two elements of the chosen subset.

\section{Proof of \texorpdfstring{\cref{thm:0.726}}{Theorem~\ref{thm:0.726}}}\label{sec:0.726}

The crucial task is to compute $M_\sigma(P)$ for $P=\{O, p_1, p_2\}$ with the property that $(p_1, p_2)\in \Delta_\sigma(O, 1)$ (indeed this is the general structure of a set in $\mathcal F_\sigma(1)$). Before coming to it, we wish to show that the only information which matters is the relative distances of the three points $O, p_1, p_2$.

Up to rotations (which do not change the value of $M_\sigma$) the points $p_1,p_2$ are determined by the distances $d_1:= |p_1-O|$, $d_2:=|p_2-O|$ and $D:=|p_1-p_2|$. On the other hand for any choice of these three numbers satisfying the triangle inequalities $D\leq d_1+d_2$, $d_1\leq D + d_2$ and $d_2\leq D+d_1$ there are two points $p_1$, and $p_2$ consistent with such distances. Without loss of generality we can relabel $p_1$ and $p_2$ so that $d_2\leq d_1$. In particular the triangle inequalities become $D\leq d_1+d_2$ and  $d_1\leq D+d_2$. In order to belong to $\Delta_\sigma(O, 1)$ we must satisfy the conditions
\begin{equation}\label{e:admiss}
d_2\leq d_1\leq 1 < 2\sigma \le D\leq d_1+d_2\, 
\end{equation}
(observe that since $D\geq 2\sigma \geq 1\geq d_1$ we can ignore the triangle inequality $d_1\leq D+d_2$).  
\cref{f:triangolo} gives an illustration of the configuration of points $\{O,p_1, p_2\}$ and the corresponding triangle. 

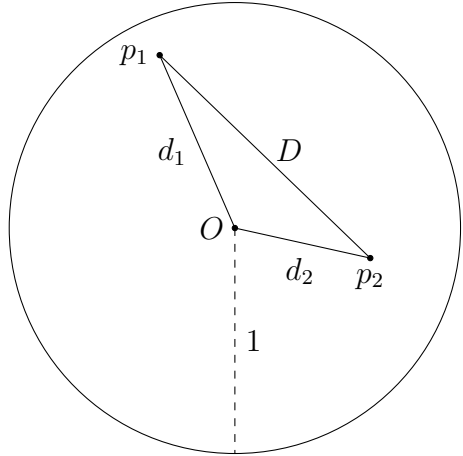
\begin{figure}
\begin{tikzpicture}

\coordinate (O) at (0, 0);
\coordinate (P1) at (-1,2.3);
\coordinate (P2) at (1.8, -0.4);

\filldraw[black] (O) circle (1pt);
\node[left] at (O) {$O$};
\draw (O) circle [radius =3.0];
\filldraw[black] (P1) circle (1pt);
\node[left] at (P1) {$p_1$};

\filldraw [black] (P2) circle (1pt);
\node[below] at (P2) {$p_2$};
\draw (O) -- (P1) -- (P2) -- (O);
\node[above right] at (0.4, 0.75) {$D$};
\node[above left] at (-0.5,0.7) {$d_1$};
\node[below left] at (1.2, -0.25) {$d_2$};
\draw (0,0) [dashed] -- (0,-3);
\node[right] at (0,-1.5) {$1$};
\end{tikzpicture}
\caption{The points $p_1, p_2$ are contained in the unit ball $B_{1}(O)$ and form, together with the origin $O$, a triangle of sides $D \geq 2\sigma$, $d_2$, and $d_1$.}\label{f:triangolo}
\end{figure}

We will compute $M_\sigma(P)$ for $P=\{O, p_1, p_2\}$ in terms of $d_1, d_2, D$ satisfying \cref{e:admiss}. 
In order to accomplish the latter task, we compute separately $M_\sigma^\sharp(P)$ and $M_\sigma^\flat(P)$ (see \cref{eq:def_M_sharp_flat} for the definitions).

\subsection{Computing \texorpdfstring{$M^\sharp_\sigma$}{M-sharp}} 
Let us denote
by $r_0,r_1,r_2$ the radii respectively associated with $O, p_1, p_2$ when $r=(r_0, r_1, r_2)\in \mathcal{R} (P)$.
In this section, we will always consider $2\sigma F^\sharp_\sigma$ instead of $F^\sharp_\sigma$ to avoid denominators.
We compute $M_\sigma^{+,\sharp}(P')$ for all subsets $P'\subseteq P$. To do so, we need the following two simple identities (whose proofs follow from the definition of $M_\sigma^{+,\sharp}$ and $F_\sigma^{+,\sharp}$). 
For any $p\in\R^2$ with $|p|\le 1$, we have
\begin{align}
        2\sigma M_\sigma^{+, \sharp}(\{p\}) &= 2\sigma-\tfrac32 -|p| \,, \label{eq:M_plus_sharp_one}\\
        2\sigma M_\sigma^{+, \sharp}(\{O, p\}) &= (2\sigma-1)|p| -\tfrac12 \, . \label{eq:M_plus_sharp_two}
\end{align}
We are now ready to compute $M_\sigma^{+,\sharp}(P')$ for all the nonempty subsets $P'\subseteq P$.

\vspace{5pt}\noindent\textbf{Case 1: $P'=\{O\}$ or $P'=\{p_1\}$ or $P'=\{p_2\}$.}\\ 
Thanks to \cref{eq:M_plus_sharp_one}, we get that the maximum of $M_\sigma^{+,\sharp}$ over these subsets is achieved by $2\sigma M_\sigma^{+,\sharp}(\{O\}) = 2\sigma-\frac32$.

\vspace{5pt}\noindent\textbf{Case 2: $P'=\{O, p_1\}$ or $P'=\{O, p_2\}$.}\\ 
Thank to \cref{eq:M_plus_sharp_two}, we get that the maximum of $M_\sigma^{+,\sharp}$ over these subsets is achieved by $2\sigma M_\sigma^{+,\sharp}(\{O, p_1\}) = (2\sigma-1)d_1-\tfrac12$. Observe that, since $d_1\le 1$, this is smaller or equal than the value $2\sigma-\frac32$.

\vspace{5pt}\noindent\textbf{Case 3: $P'=\{p_1,p_2\}$.}\\ 
For any $r\in\mathcal R^+(P')$ (or $r\in\mathcal R(P)$), we have
\begin{align*}
    2\sigma F_\sigma^{+,\sharp}(P', r) + \tfrac12 
    & = 
    2\sigma(r_1+r_2) - \max\big\{d_1 + r_1, d_2 + r_2\big\}
    \le 
    2\sigma(r_1+r_2) - \tfrac{d_1+d_2+r_1+r_2}{2}
    \\
    &=
    \big(2\sigma-\tfrac12\big)(r_1+r_2) - \tfrac{d_1+d_2}2
    \le
    \big(2\sigma-\tfrac12\big)D - \tfrac{d_1+d_2}2
    .
\end{align*}
On the other hand if we set $r_1 = \frac{D+d_2-d_1}{2}$ and $r_2 = \frac{D+d_1-d_2}{2}$
the inequality above is attained.
In order for this choice of radii to be admissible we need them to satisfy $0<r_1, r_2 \le 1$, which holds as a consequence of $\max\{d_1,d_2\} < D \le d_1+d_2$.
Therefore, in this case,
\begin{equation*}
2\sigma M^{+,\sharp}_\sigma(P') = 2\sigma D - \tfrac{D+d_1+d_2}{2}- \tfrac{1}{2}\, . 
\end{equation*}
Note that the maximum is achieved when the three circles $B_{r_1} (p_1)$, $B_{r_2} (p_2)$, and $B_{R(P, r)} (O)$ are all tangent to each other, cf. \cref{f:apollonio-1}.

\begin{figure}
\begin{tikzpicture}
\filldraw[black] (0,0) circle (1pt);
\node[left] at (0,0) {$O$};
\filldraw[black] (1,2) circle (1pt);
\node[left] at (1,2) {$p_1$};
\filldraw [black] (2,-0.5) circle (1pt);
\node[below] at (2, -0.5) {$p_2$};
\def\da{sqrt(5)};
\def\db{sqrt(4.25)};
\def\D{sqrt(1+2.5*2.5)};
\def\ra{(\D+\db-\da)/2};
\def\rb{(\D+\da-\db)/2};
\def\R{(\D+\da+\db)/2};
\draw (0,0) circle [radius={\R}];
\draw (1,2) circle [radius={\ra}];
\draw (2,-0.5) circle [radius={\rb}];
\draw[dashed] (0,0) -- (0,{-\R});
\node[left] at (0,{-\R/2}) {$R (P,r)$};
\draw (2,-0.5) -- (0,0) -- (1,2);
\node[above] at (1.4,-.35) {$d_2$};
\node[right] at (0.6,1.2) {$d_1$}; 
\end{tikzpicture}
\caption{The configuration achieving $M^{+,\sharp}_\sigma(\{p_1,p_2\})$. The three disks $B_{r_1} (p_1)$, $B_{r_2} (p_2)$, and $B_{R(P,r)} (O)$ are all tangent to each other.}\label{f:apollonio-1}
\end{figure}
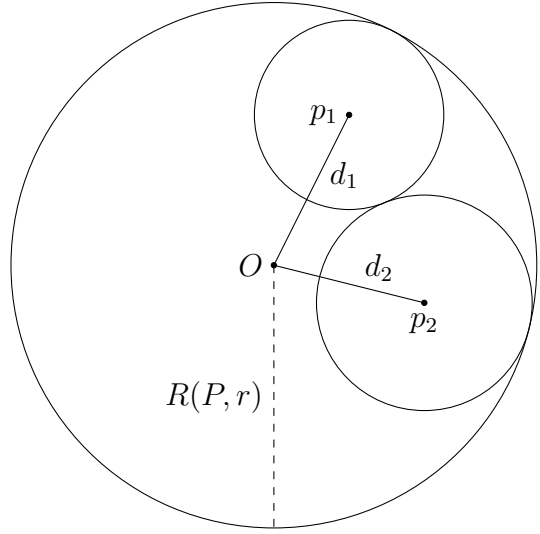

\vspace{5pt}\noindent\textbf{Case 4: $P'=P=\{O, p_1,p_2\}$.}\\
For any $r\in\mathcal R^+(P)$ (or $r\in\mathcal R(P)$), we have
\begin{align*}
2\sigma F^{+,\sharp}_\sigma(P, r) + \tfrac12 
&= 2\sigma (r_0+r_1+r_2) - \max \{r_1+d_1, r_2+d_2\}
\\
&\le 2\sigma (r_0+r_1+r_2) - d_1 - r_1 \\
&= (\sigma+ \tfrac{1}{2}) (r_0+r_2) + (\sigma-\tfrac{1}{2}) (r_1+r_0) 
+ (\sigma-\tfrac{1}{2}) (r_1+r_2) - d_1\\
&\leq (\sigma+\tfrac{1}{2}) d_2 + (\sigma-\tfrac{1}{2}) d_1 + (\sigma-\tfrac{1}{2}) D
-d_1 .
\end{align*} 
In particular we conclude
\begin{equation*}
2\sigma M^{+,\sharp}_\sigma(P, r)\leq \sigma (D+d_1+d_2) - \tfrac{D+3d_1-d_2}{2} - \tfrac{1}{2}\, .
\end{equation*}
On the other hand, the equality is achieved if we set $r_0=\frac{d_1+d_2-D}{2}$, $r_1=\frac{D+d_1-d_2}{2}$, and $r_2=\frac{D+d_2-d_1}{2}$. Indeed, all these radii are nonnegative because of the triangle inequalities and they satisfy $r_0+r_1= d_1\leq 1$, $r_0+r_2=d_2\leq 1$, and $r_1+r_2=D$, thus guaranteeing that they satisfy all the constraints. Finally, since $r_1\geq r_2$ and $d_1\geq d_2$, we have $\max \{d_1+r_1, d_2+r_2\}=d_1+r_1$. 
Note that, as observed at the end of \cref{sec:linear-programming}, we can replace the condition $r\in\mathcal R^+(P)$ with $r\in\overline {\mathcal R(P)} = \mathcal R(P)$ in the definition of $M^{+,\sharp}_\sigma$ and therefore it is not an issue if the radii $r_0, r_1, r_2$ are not strictly positive.
Hence,
\begin{equation*}
2\sigma M^{+,\sharp}_\sigma(P, r) = \sigma (D+d_1+d_2) - \tfrac{D+3d_1-d_2}{2} - \tfrac{1}{2}\, .
\end{equation*}
Observe in passing that this maximum is achieved when the three circles $B_{r_0} (O)$, $B_{r_1} (p_1)$ and $B_{r_2} (p_2)$ are all tangent to each other, cf. \cref{f:apollonio-2}. 

Summarizing the four cases analyzed, we have shown
\begin{equation}\label{e:max-diesis}
2\sigma M^\sharp_\sigma(P) = \max \left\{ 2\sigma - 1, 2\sigma D - \tfrac{D+d_1+d_2}{2}, \sigma (D+d_1+d_2) - \tfrac{D+3d_1-d_2}{2}\right\} - \tfrac{1}{2}\, .
\end{equation}

\begin{figure}
\begin{tikzpicture}
\filldraw[black] (0,0) circle (1pt);
\node[left] at (0,0) {$O$};
\filldraw[black] (1,2) circle (1pt);
\node[left] at (1,2) {$p_1$};
\filldraw [black] (2,-0.5) circle (1pt);
\node[below] at (2, -0.5) {$p_2$};
\def\da{sqrt(5)};
\def\db{sqrt(4.25)};
\def\D{sqrt(1+2.5*2.5)};
\def\ra{(\D+\da-\db)/2};
\def\rb{(\D+\db-\da)/2};
\def\r{\da-\ra};
\draw (0,0) circle [radius={\r}];
\draw (1,2) circle [radius={\ra}];
\draw (2,-0.5) circle [radius={\rb}];
\draw (2,-0.5) -- (0,0) -- (1,2) -- (2,-0.5);
\node[below] at (1.1,-.25) {$d_2$};
\node[left] at (0.6,1.2) {$d_1$}; 
\node[above] at (1.7,0.75) {$D$};
\end{tikzpicture}
\caption{The maximum value of $F^\sharp_\sigma$ under the assumption $r_0>0$ is achieved when the three disks $B_{r_1} (p_1)$, $B_{r_2} (p_2)$, and $B_{r_0} (O)$ are all tangent to each other.}\label{f:apollonio-2}
\end{figure}
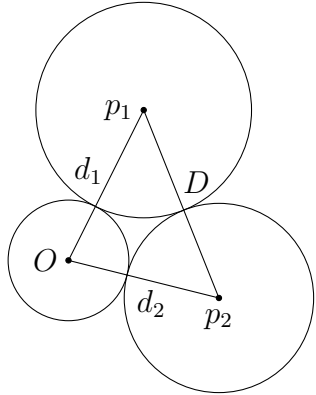

\subsection{Computing \texorpdfstring{$M^\flat_\sigma$}{M-flat}}
We next wish to show that $M^\flat_\sigma(P)=0$ (as in the previous section, we will consider $2\sigma M^\flat_\sigma$ to avoid denominators).
We start by observing that now the function is symmetric in the entries $r_0$, $r_1$, and $r_2$.
We consider $M_\sigma^{+,\flat}(P')$ for all subsets $P'\subseteq P$. To this purpose, we need the following two simple identities (whose proofs follow from the definition of $M_\sigma^{+,\flat}$ and $F_\sigma^{+,\flat}$). For any $p, q\in\R^2$ with $|p-q|\le 2$, we have
\begin{align}
    2\sigma M_\sigma^{+,\flat}(\{p\}) &= 0 \,, \label{eq:M_plus_flat_one} \\
    2\sigma M_\sigma^{+,\flat}(\{p, q\}) &= 2(\sigma-1)|p-q|\le 0 \,. \label{eq:M_plus_flat_two}
\end{align}
We are now ready to study $M_\sigma^{+,\flat}(P')$ for all the nonempty subsets $P'\subseteq P$.

\vspace{5pt}\noindent\textbf{Case 1: $|P'|=1$.}\\ 
Thanks to \cref{eq:M_plus_flat_one}, we get that $2\sigma M_\sigma^{+,\sharp}(P') = 0$.

\vspace{5pt}\noindent\textbf{Case 2: $|P'|=2$.}\\ 
Thanks to \cref{eq:M_plus_flat_two}, we get that $2\sigma M_\sigma^{+,\sharp}(P') \le 0$.

\vspace{5pt}\noindent\textbf{Case 3: $P'=P=\{O, p_1, p_2\}$.}\\ 
For any $r\in\mathcal R^+(P)$ (or $r\in\mathcal R(P)$), we have
\[
2\sigma F_\sigma^{+,\flat}(P, r) = 2\sigma (r_0+r_1+r_2) - \max \{D+r_1+r_2, d_1+r_1+r_0, d_2+r_2+r_0\}\, .
\]
Since $r_0+r_1\le d_1$ and $r_0+r_2\le d_2$, we get $r_0 \le \frac{d_1+d_2-(r_1+r_2)}2$. Thus, we have
\begin{align*}
    2\sigma F_\sigma^\flat(P, r) 
    &\le 
    2\sigma(r_0+r_1+r_2) - (D+r_1+r_2)
    =
    (2\sigma-1)(r_1+r_2) + 2\sigma r_0 - D \\
    &\le
    (2\sigma-1)(r_1+r_2) + \sigma (d_1+d_2 - (r_1+r_2)) - D
    \\
    &=
    (\sigma-1)(r_1+r_2) + \sigma(d_1+d_2) - D
    \le 0 + \sigma(1+1) - 2\sigma = 0.
\end{align*}
We thus conclude that $M_\sigma^\flat(P) = 0$ as desired.

\subsection{Conclusion}
Observe that, since $2\sigma M_\sigma (P) \geq 2\sigma -\frac{3}{2}>0$ when $\sigma > \frac{3}{4}$, we can assume, without loss of generality, that $\sigma \in (\frac{1}{2}, \frac{3}{4}]$.
We have thus arrived at the conclusion that we need to minimize 
\begin{align*}
2\sigma M_\sigma(P) 
= \max \left\{\tfrac{1}{2}, 2\sigma D - \tfrac{D+d_1+d_2}{2}, \sigma (D+d_1+d_2) - \tfrac{D+3d_1-d_2}{2}\right\} - \tfrac{1}{2} =: \mu (D, d_1, d_2) 
\end{align*}
under the assumptions that $\sigma \in (\frac{1}{2}, \frac{3}{4}]$ and that $(p_1, p_2)\in \Delta_\sigma(O, 1)$ with $|p_1|\geq |p_2|$. According to our discussion, this is equivalent to minimize the function $\mu$ for $(D, d_1, d_2)$ varying in the set 
\[
\Delta := \left\{1\geq d_1\geq d_2\geq 0, d_2+d_1\geq D \geq 2\sigma\right\} .
\]
Next consider $(D, d_1, d_2)\in \Delta$ and, if $D>2\sigma$, observe that the triple $(2\sigma, \frac{2\sigma d_1}{D}, \frac{2\sigma d_2}{D})$ still belongs to $\Delta$ and $\mu (2\sigma, \frac{2\sigma d_1}{D}, \frac{2\sigma d_2}{D})\leq \mu (D, d_1, d_2)$. Hence, the minimum of $\mu$ in $\Delta$ is achieved at a triple $(D, d_1, d_2)$ with $D=2\sigma$.

We then introduce the functions
\begin{align*}
\mu_1 (d_1, d_2) &= 4\sigma^2 - \sigma - \tfrac{d_1+d_2}{2}\, ,\\
\mu_2 (d_1, d_2) &= 2\sigma^2 -\sigma
+ (\sigma +\tfrac{1}{2})d_2 - (\tfrac{3}{2} -\sigma) d_1\, , \\
\bar{\mu} (d_1, d_2) &= \max \{\mu_1 (d_1, d_2), \mu_2 (d_1, d_2)\}\, ,
\end{align*}
and the set $\tilde{\Delta} := \{1\geq d_1\geq d_2\geq 0, d_2+d_1\geq 2\sigma\}$. 

So, for the values of $\sigma$ which interest us, namely $\sigma \in (\frac{1}{2}, \frac{3}{4}]$,  
\[
1\oneminM_\sigma = \tfrac1{2\sigma}\max\{0, \min \{\bar{\mu} (d_1, d_2) : (d_1, d_2)\in \tilde{\Delta}\} - \tfrac{1}{2}\}\, .
\]

Hence we are left with the task of finding $\min \{\bar{\mu} (d_1, d_2) : (d_1, d_2)\in \tilde{\Delta}\}$ under the assumption that $\sigma\in (\frac{1}{2}, \frac{3}{4}]$. 
Note that $\partial_1 \mu_1 =-\tfrac12<0$ and $\partial_1\mu_2 = \sigma-\tfrac 32<0$. Hence the minimum of $\bar\mu$ must be achieved at a point $(d_1,d_2)\in\tilde \Delta$ so that $(d_1+\eps,d_2)\not\in\tilde\Delta$ for all $\eps>0$. Hence, it must be $d_1=1$.
So, we have reduced the problem to finding
\begin{equation*}
    \min \big\{\max\{\mu_1(1, d_2), \mu_2(1, d_2)\}:\, 2\sigma-1\le d_2\le 1 \big\}.
\end{equation*}
Observe that $\mu_2 (1,2\sigma-1)<\mu_1 (1, 2\sigma-1)$ and $\mu_1 (1,1) < \mu_2 (1,1)$. 
Therefore, since $\mu_1$ and $\mu_2$ are affine functions and $\partial_2\mu_1<0<\partial_2\mu_2$, the minimum we are looking for is achieved at the point $2\sigma-1\le d_2\le 1$ such that $\mu_1(1, d_2)=\mu_2(1, d_2)$, which yields 
\[
d_2 = \tfrac{2\sigma^2 - \sigma + 1}{\sigma+1}.
\]
We have thus arrived to the formula for $1\oneminM_\sigma$ on the interval $(\frac{1}{2}, \frac{3}{4}]$, which is given by 
\begin{equation}\label{e:final-bound-below}
1\oneminM_\sigma=\tfrac1{2\sigma}\max \left\{0, 4\sigma^2 - \sigma - \tfrac{\sigma^2+1}{\sigma+1}-\tfrac{1}{2}\right\} = \tfrac{\max\{0, 8\sigma^3+4\sigma^2-3\sigma-3\}}{4\sigma(\sigma+1)}.
\end{equation}
Observe that the derivative of $\eta(\sigma) := 8 \sigma^3 + 4 \sigma^2 - 3\sigma-3$ is given by $24 \sigma^2 + 8 \sigma - 3$,
which is positive on $[\frac{1}{2}, \infty)$. Since $\eta (\frac{1}{2})<0$ and $\eta(\frac{3}{4})>0$, the polynomial $\eta$ has a unique zero $\sigma_{PT}$ in $[\frac{1}{2}, \infty)$, which falls in the interval $[\frac{1}{2}, \frac{3}{4}]$. For $\sigma \in [0,\frac{1}{2}]$ clearly $\eta (\sigma)\leq 1+1-0-3<-1$ and so $\sigma_{PT}$ is the only positive real root of $\eta$. In fact, it is not hard to see, using elementary inequalities that $\sigma_{PT}$ is the only {\em real} zero of the polynomial $\eta$, however this is irrelevant for our purposes. Using the Cardano-Tartaglia formula we can then write a closed expression for $\sigma_{PT}$ involving radicals.

\section{Explicit construction of counterexamples}\label{sec:explicit-counterexamples}
We show that $2\oneminM_{\sigma}=0$ for $\sigma=0.683$ and $\InfoneminM_{\sigma}=0$ for the unique positive $\sigma$ solving $32s^3-32s^2+12s-3$ (which are, respectively, one part of the statement of \cref{thm:0.7} and \cref{thm:0.64}) by describing two valid---for the problems at hand---subsets $P$ with $M_\sigma(P)=0$. Similarly, we will show that 
\[
k\oneminM_{\sigma_{PT}}(X, d, O)=0 \qquad \mbox{for all $k\in\N_0$}
\]
in an appropriately constructed metric space, as stated in \cref{thm:0.726-metric}.

\subsection{\texorpdfstring{$2\oneminM_{0.683}=0$}{2-MinMax counterexample}}
Let $\sigma:=0.683$.
We construct a $6$-point set $P$ that belongs to $\mathcal F_\sigma(2)$ such that $M_\sigma(P)=0$.

Let $P:=\{O, a, b, a_1, a_2, b_1, b_2\}$, where
\begin{alignat*}{2}
    &a   = (0.306, 0.952), \quad &&  b = (0.034,-0.387), \\
    &a_1 = O,              \quad &&a_2 = (0.464, 1.815), \\
    &b_1 = (0.031, 0.599), \quad &&b_2 = (0.516, -0.679). 
\end{alignat*}

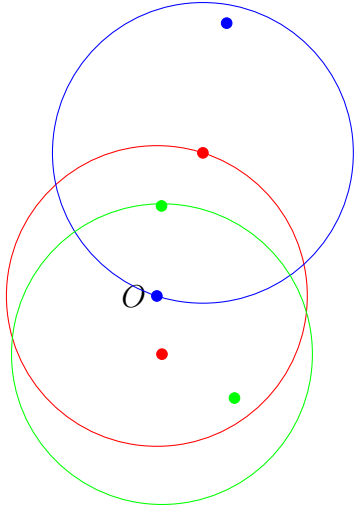
\begin{figure}
\begin{tikzpicture}[scale=2]
\node[left] at (0,0) {$O$};
\filldraw[color=blue] (0,0) circle (1pt);
\draw[color=red] (0,0) circle (1cm);
\filldraw[color=red] (0.306, 0.952) circle (1pt);
\draw[color=blue] (0.306, 0.952) circle (1cm);
\filldraw[color=red] (0.034,-0.387) circle (1pt);
\draw[color=green] (0.034,-0.387) circle (1cm);
\filldraw[color=blue] (0.464, 1.815) circle (1pt);
\filldraw[color=green] (0.031, 0.599) circle (1pt);
\filldraw[color=green] (0.516, -0.679) circle (1pt);
\end{tikzpicture}
\caption{The configuration of $6$ points that proves $2\oneminM_{0.683}=0$.\\
The points $a$ and $b$ are red and are inside the red circle, which is the circle centered at $O$ of radius $1$.
The points $a_1$ and $a_2$ are blue and are inside the blue circle, which is the circle centered at $a$ of radius $1$. Note that $a_1$ coincides with the origin. 
Finally, $b_1$ and $b_2$ are green and are inside the green circle, which is the circle centered at $b$ of radius $1$.
}\label{fig:0.683}
\end{figure}

Observe that $(a, b)\in \Delta_\sigma(O, 1)$, $(a_1, a_2)\in \Delta_\sigma(a, 1)$, and $(b_1, b_2)\in \Delta_\sigma(b, 1)$.
Therefore, $P\in\mathcal F_\sigma(2)$.

It remains to show that $M_\sigma(P)=0$.
Recall that $M_\sigma(P)=\max\{M_\sigma^\sharp(P), M_\sigma^\flat(P)\}$ (as in \cref{eq:max_M_sharp_flat}).
It can be verified numerically that $M_\sigma^\sharp(P)=-0.00032\ldots<0$.
To check $M^\flat_\sigma(P)=\max \{F^\flat_\sigma(P, r): r \in \mathcal{R} (P)\}\leq 0$ we need to be more careful because $F^\flat_\sigma(P, r)=0$ for $r\equiv 0$.

First of all, \cref{eq:M_plus_flat_one} shows that, for any set $Q\subseteq\R^2$ and $r\in\mathcal R(Q)$, we have $F_\sigma^\flat(Q, r)\le 0$ if at most one radius is nonzero.
Hence, we can restrict our attention to the function
\begin{equation}\label{eq:emmebarrata}
\bar{M}^\flat_\sigma(P) := \sup \{F^\flat_\sigma(P, r) : r\in \mathcal{R} (P) 
\textrm{ and at least two radii are nonzero}\}\, .
\end{equation}
The latter can be phrased as the maximum of finitely many linear programming problems as in \cref{sec:linear-programming}; then with the help of a computer we can check $\bar{M}^\flat_\sigma(P) = -0.1803\ldots < 0$.

\begin{remark}\label{r:mandorla3}
Observe that $B_1 (-1,0) \cap P=\emptyset$, in fact the first coordinates of all points are nonnegative. In particular this example complies also with the additional constraint of \cref{r:mandorla,r:mandorla2}.
\end{remark}

\subsection{\texorpdfstring{$\InfoneminM_\sigma=0$}{Infinite min-max} for \texorpdfstring{$\sigma\approx0.64368\ldots$}{sigma=0.64368...}}
Let $\sigma\approx0.64368\ldots$ be the unique positive solution of $32\sigma^3-32\sigma^2+12\sigma-3=0$.
We construct a $4$-point set $P$ containing the origin that is $(\sigma,\infty)$-one-stable and such that $M_\sigma(P)=0$.

\subsubsection{Setup}
Let $P:=\{p_1, p_2, p_3, p_4\}$, where
\begin{equation*}
    p_1=O,\, 
    p_2 = (2-2\sigma, 0),\, 
    p_3 =  \big(2-2\sigma+\cos\theta, -\sin\theta\big),\,
    p_4 = \big(-\cos\theta, -\sin\theta\big) ,\,
\end{equation*}
where $0<\theta<\tfrac{\pi}2$ satisfies $\cos\theta=\frac{8\sigma-5}{4(1-\sigma)}$.
The distances between these points are
\begin{align*}
    &|p_1-p_2| = 2(1-\sigma) := b, \\
    &|p_2-p_3| = |p_4-p_1| = 1,\\
    &|p_1-p_3| = |p_2-p_4| = 2\sigma,\\
    &|p_3-p_4| = 2\left(1-\sigma+\tfrac{8\sigma-5}{4(1-\sigma)}\right) =: B \approx 0.92239\ldots .
\end{align*}
Observe that $P$ is the set of vertices of an isosceles trapezoid whose bases have lengths $b$ and $B$, whose legs have length $1$, and whose diagonals have length $2\sigma$ (see \cref{fig:0.64}).

The set $P$ is $(\sigma,\infty)$-one-stable as its points satisfy
\begin{equation*}
    (p_2, p_4)\in \Delta_\sigma(p_1, 1),\,
    (p_1, p_3)\in \Delta_\sigma(p_2, 1),\,
    (p_2, p_4)\in \Delta_\sigma(p_3, 1),\,
    (p_1, p_3)\in \Delta_\sigma(p_4, 1).
\end{equation*}

\begin{figure}
\begin{tikzpicture}[scale=3.5]

\def\ss{0.64368};
\def\ddx{(8*\ss-5)/(4-4*\ss)};
\def\ddy{sqrt(1-\ddx*\ddx)};

\coordinate (P1) at (0,0);
\coordinate (P2) at ({2-2*\ss}, 0);
\coordinate (P3) at ({2-2*\ss+\ddx}, {0-\ddy});
\coordinate (P4) at ({-\ddx}, {-\ddy});

\filldraw[black] (P1) circle (0.5pt);
\node[left] at (P1) {$p_1=O$};

\filldraw[black] (P2) circle (0.5pt);
\node[right] at (P2) {$p_2$};

\filldraw[black] (P3) circle (0.5pt);
\node[right] at (P3) {$p_3$};

\filldraw[black] (P4) circle (0.5pt);
\node[left] at (P4) {$p_4$};

\draw (P1) -- (P2) -- (P3) -- (P4) -- (P1);

\draw (P1) --  (P3);

\node[above] at ({1-\ss}, 0) {$b$};
\node[below] at ({1-\ss}, {-\ddy}) {$B$};
\node[left] at ({-\ddx/2-0.05}, {-\ddy/2}) {$1$};
\node[right] at ({2-2*\ss+\ddx/2+0.05}, {-\ddy/2}) {$1$};
\node[below] at ({1-\ss}, {-\ddy/2}) {$2\sigma$};

\end{tikzpicture}
\caption{The configuration of four points $p_1, p_2, p_3, p_4$ that proves $\InfoneminM_{\sigma}=0$, where $\sigma\approx 0.64368\dots$ is the unique positive solution of $32\sigma^3-32\sigma^2+12\sigma-3=0$.
}\label{fig:0.64}
\end{figure}
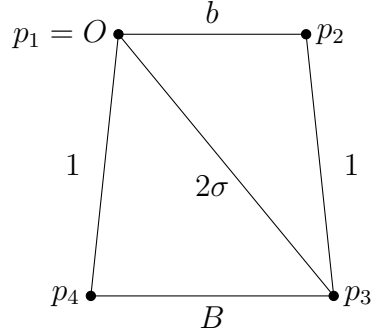

\subsubsection{Proof of $M_\sigma(P)=0$}
It remains to show that $M_\sigma(P)=0$. 
As in the previous subsection, we will check separately that $M_\sigma^\sharp(P)=0$ and $M^\flat_\sigma(P)=0$. 

Let us start with $M^\sharp_\sigma(P)$. Recall that
\begin{equation*}
    M^\sharp_\sigma(P) = \max_{P'\subseteq P} M^{+,\sharp}_\sigma(P').
\end{equation*}
We argue in different ways depending on the size of $P'$.
\begin{itemize}
    \item If $1\le |P'|\le 3$, then we can check with the help of a computer that $M_\sigma^{+,\sharp}(P')\le -0.03014\ldots < 0$. 
    \item If $|P'|=4$, then $P'=P$. In this case, given $r\in\mathcal R(P)$, denote $r_i:= r(p_i)$. We have
    \begin{align*}
        F_\sigma^{+,\sharp}(P,r) &= r_1+r_2+r_3+r_4 - \tfrac1{2\sigma}\big(\tfrac12 + R(P, r)\big),
        \\
        R(P, r) &\ge \max\big\{
            |p_1-p_3| + r_3,
            |p_1-p_4| + r_4
        \big\}
        \ge
        \frac{2\sigma+r_3+1+r_4}2 .
    \end{align*}
    Therefore,
    \begin{equation*}
        F_\sigma^{+,\sharp}(P,r) \le r_1+r_2 + (1-\tfrac1{4\sigma})(r_3+r_4)-\big(\tfrac1{2\sigma}+\tfrac12\big).
    \end{equation*}
    Since $r\in\mathcal R(P)$, we have $r_1+r_2\le |p_1-p_2|=b$ and $r_3+r_4\le |p_3-p_4|=B$. We finally obtain
    \begin{equation*}
        M_\sigma^{+,\sharp}(P) = \sup_{r\in\mathcal R(P)} F_\sigma^{+,\sharp}(P,r) \le b + \big(1-\tfrac1{4\sigma}\big)B  -\big(\tfrac1{2\sigma}+\tfrac12\big) = 
        \frac{32\sigma^3-32\sigma^2+12\sigma-3}{8\sigma(1-\sigma)} = 0.
    \end{equation*}
\end{itemize}
Now, let us consider $M_\sigma^\flat(P)$. Recall that
\begin{equation*}
    M^\flat_\sigma(P) = \max_{P'\subseteq P} M^{+,\flat}_\sigma(P').
\end{equation*}
We argue in different ways depending on the size of $P'$.
\begin{itemize}
    \item If $|P'|\le 1$, then \cref{eq:M_plus_flat_one} tells us that $M^{+,\flat}_\sigma(P')=0$.
    \item If $2\le |P'|\le 3$, then we can check with the help of a computer that $M_\sigma^{+,\flat}(P')\le -0.23604\ldots < 0$.
    \item If $|P'|=4$, then $P'=P$. In this case, given $r\in\mathcal R(P)$, denote $r_i:= r(p_i)$. We have
    \begin{align*}
        F_\sigma^{+,\flat}(P,r) &= r_1+r_2+r_3+r_4 - \tfrac1{2\sigma}\diam(U(P, r)),
        \\
        \diam(U(P, r)) &\ge \max\big\{
            |p_1-p_3| + r_1 + r_3,
            |p_2-p_4| + r_2 + r_4
        \big\}
        \ge
        2\sigma + \tfrac{r_1+r_2 + r_3+r_4}2 .
    \end{align*}
    Therefore,
    \begin{equation*}
        F_\sigma^{+,\flat}(P,r) \le \big(1-\tfrac1{4\sigma}\big)(r_1+r_2+r_3+r_4) - 1.
    \end{equation*}
    Since $r\in\mathcal R(P)$, we have $r_1+r_2\le |p_1-p_2|=b$ and $r_3+r_4\le |p_3-p_4|=B$ and so $r_1+r_2+r_3+r_4\le b+B$. We finally obtain
    \begin{equation*}
        M_\sigma^{+,\flat}(P) = \sup_{r\in\mathcal R(P)} F_\sigma^{+,\flat}(P,r) \le \big(1-\tfrac1{4\sigma}\big)(b+B) - 1 = 
        \frac{32\sigma^3-32\sigma^2+12\sigma-3}{8\sigma(1-\sigma)} = 0.
    \end{equation*}
\end{itemize}

\begin{remark}\label{r:mandorla4}
Even in this case the second coordinates of all points are nonpositive and hence the example complies with the additional constraint of \cref{r:mandorla,r:mandorla2}.
\end{remark}

\subsection{\texorpdfstring{$k\oneminM_{\sigma_{PT}}=0$}{Infinite min-max counterexample} in a metric space}
Let $\sigma:=\sigma_{PT}$, so that $8\sigma^3+4\sigma^2-3\sigma-3=0$.

We describe an explicit example that satisfies the requirements of \cref{thm:0.726-metric}.
Observe that this example is sharp. Indeed, in view of \cref{thm:01-minmax-metric}, such a construction could not work for a larger $\sigma$.

\subsubsection{Setup}
Let $(X, d)$ be the metric graph with four vertices $p_1, p_2, p_3, p_4$ connected by four edges to form a cycle. The length of the edges are (for the sake of brevity we denote $d_{ij}:= d(p_i, p_j)$)
\begin{equation*}\begin{aligned}
    d_{12} &:= \tfrac{2\sigma^2-\sigma+1}{\sigma+1} < 1,\\
    d_{23} &:= \tfrac{2\sigma}{\sigma+1} < 1,\\
    d_{34} &:= \tfrac{2\sigma^2}{\sigma+1} < 1, \\
    d_{41} &:= 1.
\end{aligned}\end{equation*}
Moreover, we add four infinite length edges starting from each of $p_1, p_2, p_3, p_4$. The distance $d$ on $X$ is the geodesic distance induced by the edges. In particular, one has
\begin{equation*}\begin{aligned}
    d_{13} &= d_{14} + d_{43} = d_{12} + d_{23} = \tfrac{2\sigma^2+\sigma+1}{\sigma+1} > 2\sigma, \\
    d_{24} &= \min\{d_{21} + d_{14},\, d_{23} + d_{34}\}
    = d_{23} + d_{34} = 2\sigma.
\end{aligned}\end{equation*}
Let $P=\{p_1, p_2, p_3, p_4\}$ and set $O:= p_1$. See \cref{fig:metric-quadrilateral}.


\begin{figure}
\newcommand\PointOnCircle[2]{
    \filldraw[black] (#1:1) circle (1pt);
    \node at (#1:0.85) {#2};
    \draw (#1:1) -- (#1:1.4);
    \draw[dashed] (#1:1.4) -- (#1:1.7);
}

\begin{tikzpicture}[scale=2]
\draw (0,0) circle [radius=1];

\PointOnCircle{180}{$\quad\quad\ p_1=O$}
\PointOnCircle{94}{$p_2$}
\PointOnCircle{0}{$p_3$}
\PointOnCircle{291.7}{$p_4$}

\node at (130:1.25) {$\frac{2\sigma^2-\sigma+1}{\sigma+1}$};
\node at (40:1.25) {$\frac{2\sigma}{\sigma+1}$};
\node at (-30:1.25) {$\frac{2\sigma^2}{\sigma+1}$};
\node at (235:1.2) {$1$};

\end{tikzpicture}
\caption{The metric space $(X, d)$ and the four-point set $P=\{p_1, p_2, p_3, p_4\}$ that proves $k\oneminM_{\sigma_{PT}}(X, d, O)=0$.}\label{fig:metric-quadrilateral}
\end{figure}

Since we know all the mutual distances between points in $P$, it is simple to check that
\begin{equation*}
    (p_2, p_4)\in\Delta_\sigma(p_1, 1),\ 
    (p_1, p_3)\in\Delta_\sigma(p_2, 1),\ 
    (p_2, p_4)\in\Delta_\sigma(p_3, 1),\ 
    (p_1, p_3)\in\Delta_\sigma(p_4, 1)
\end{equation*}
and therefore $P$ is $(\sigma,\infty)$-one-stable.

The decision to define $(X,d)$ as a metric graph (instead of just letting $X$ be equal to $P$) and to add rays emanating from $p_1, p_2, p_3, p_4$ guarantees that for any $r\in\mathcal R(P)$, the formulas \cref{eq:RU-formulas} for $R(P, r)$ and $\diam(U(P, r))$ hold.

\subsubsection{Proof of $M_\sigma(P)=0$}
It remains to prove that $M_\sigma(P)=0$. The fundamental observation --- as well as the main idea behind the construction --- is that the triangle $\{p_1, p_2, p_4\}$ yields the equality case of the computations for $M_\sigma^\sharp$ performed in \cref{sec:0.726}. In particular, applying \cref{e:max-diesis}, we have $M^\sharp_\sigma(\{p_1, p_2, p_4\})= 0$.  

As we have already observed in the previous two subsections, it is sufficient to show that $M_\sigma^\sharp(P)=0$ and $\bar M_\sigma^\flat(P)\leq 0$ (where $\bar M_\sigma^\flat$ is defined as in \cref{eq:emmebarrata}).
With the help of a computer one can check that $\bar M_\sigma^\flat(P) = -0.148\ldots < 0$, but in fact, following arguments entirely analogous to the ones which we now give, the inequality $\bar M_\sigma^\flat (P)<0$ can be proved with ``pen and paper'' with a little additional effort.

Now, we show that $F_\sigma^\sharp(P, r)\le M^\sharp_\sigma(\{p_1, p_2, p_4\})= 0$ for all $r\in\mathcal R(P)$. For the sake of notational simplicity we denote $r_i:=r(p_i)$.
Our strategy is to show that it is never convenient to have $r_3>0$.

If $r_3=0$, then $F_\sigma^\sharp(P, r) = F_\sigma^\sharp(\{p_1, p_2, p_4\}, r) \le M_\sigma^\sharp(\{p_1, p_2, p_4\}) = 0$. 

On the other hand, if $r_3>0$, since $B_{r_2}(p_2)$ and $B_{r_4}(p_4)$ cannot contain $p_3$, we have $R(P, r) = d_{13}+r_3$ and therefore
\begin{equation}\label{eq:Fsharp-pos3}
    2\sigma F^\sharp_\sigma(P, r) = 2\sigma(r_1+r_2+r_3+r_4) - (\tfrac12 + d_{13} + r_3).
\end{equation}

We want to use, as competitor for $r$, the family of radii 
\begin{equation*}
    (r'_1, r'_2, r'_3, r'_4) := (\max\{0, r_1-r_3\}, r_2 + r_3, 0, r_4 + r_3).
\end{equation*}
Let us check that $r'\in\mathcal R(P)$.
We have $r'_2=r_2+r_3\le d_{23}\le 1$ and $r'_4=r_4+r_3\le d_{34}\le 1$ and $r'_2+r'_4 = \le d_{23} + d_{34} = d_{24}$. 
If $r'_1=0$, there is nothing else to be checked to ensure that $r'\in\mathcal R(P)$. 
If $r'_1>0$, then it must be $r'_1=r_1-r_3$ and then we verify also that $r'_1+r'_2 = r_1+r_2\le d_{12}$ and analogously $r'_1+r'_4=r_1+r_4\le d_{14}$.
Observe also that 
\begin{equation}\begin{aligned}\label{eq:Fsharp-pos3-R}
    R(P, r') &= \max\{d_{12}+r'_2, d_{14}+r'_4\} = 
    \max\{d_{12}+r_2+r_3, d_{14}+r_4+r_3\}
    \\
    &\le
    \max\{d_{12}+d_{23}, d_{14}+d_{43}\}
    =
    d_{13}.
\end{aligned}\end{equation}

Thanks to \cref{eq:Fsharp-pos3,eq:Fsharp-pos3-R}, we can show that $r'$ is a better competitor than $r$:
\begin{equation*}\begin{aligned}
    2\sigma F^\sharp_\sigma(P, r) &=
    2\sigma(r_1+r_2+r_3+r_4) - (\tfrac12 + d_{13} + r_3) \\
    &\le
    2\sigma(r'_1+r'_2+r'_3+r'_4) - (\tfrac12 + d_{13} + r'_3)\\
    &\leq  
    2\sigma(r'_1+r'_2+r'_3+r'_4) - (\tfrac12 + R(P, r'))
     \\
    &=
    2\sigma F^\sharp_\sigma(P, r') = 2\sigma F^\sharp_\sigma(\{p_1, p_2, p_4\}, r')
    \le 2\sigma M^\sharp_\sigma(\{p_1, p_2, p_4\}) = 0.
\end{aligned}\end{equation*} 

\section{Computer-assisted proof of \texorpdfstring{\cref{thm:0.7}}{the 0.7 bound}}\label{s:0.7}
Let $\sigma:= 0.7$.
In this section we describe how to produce a computer-assisted proof of
\begin{equation}\label{eq:2-min-max-0.7-positive}
    2\oneminM_\sigma > 0.
\end{equation}
By definition, $2\oneminM_\sigma$ is the minimum of $M_\sigma(P)$ where $P$ runs over sets in $\mathcal F_\sigma(2)$.
Fix now $P\in \mathcal{F}_2 (\sigma)$. 
By construction $P=\{O, p_1, p_2, p_3, \ldots, p_8\}$ so that
$(p_1, p_2)\in \Delta_\sigma(O, 1)$, 
$(p_3, p_4)\in \Delta_\sigma(O, 1)$, 
$(p_5, p_6)\in \Delta_\sigma(p_1, 1)$, 
and $(p_7, p_8) \in \Delta_\sigma(p_2, 1)$. 
However, note that, for any such $P$, certainly also $P' := (O, p_1, p_2, p_5, p_6, p_7, p_8)$ belongs to $\mathcal{F}_\sigma(2)$ and moreover $M_\sigma(P') \le M_\sigma(P)$. In particular, we can rewrite the definition of $2\oneminM_\sigma$ in a simpler way, using $7$-point sets, which for ease of notation from now on we write as,
$\{O, p_1, p_2, q_{11}, q_{12}, q_{21}, q_{22}\}= \{O\}\cup P\cup Q$
\begin{equation*}
    2\oneminM_\sigma := \inf\bigg\{ M_\sigma(\{O\}\cup P\cup Q) : 
    \begin{array}{l}
        (p_1,p_2) \in \Delta_\sigma(O, 1) ,\,\\
        (q_{i1}, q_{i2})\in \Delta_\sigma(p_i, 1) \text{ for $i\in\{1,2\}$}
    \end{array}\bigg\}.
\end{equation*}
We settle for a possibly worse estimate but a less complicated problem by replacing $M_\sigma$ with $M_\sigma^\sharp$ (see \cref{eq:max_M_sharp_flat}), so that
\begin{equation*}
    2\oneminM_\sigma \ge \inf\bigg\{ M^\sharp_\sigma(\{O\}\cup P\cup Q) : 
    \begin{array}{l}
         (p_1,p_2) \in \Delta_\sigma(O, 1) ,\,\\
        (q_{i1}, q_{i2})\in \Delta_\sigma(p_i, 1) \text{ for $i\in\{1,2\}$}\\
    \end{array}\bigg\}.
\end{equation*}
Thus, to prove \cref{eq:2-min-max-0.7-positive}, it is sufficient to show that the right-hand side of the latter inequality is strictly positive.

Notice that the proof of \cref{lem:M-lip} shows also that, for any $(p_i)_{1\le i\le k}, (p'_i)_{1\le i\le k}\subseteq\R^2$,
\begin{equation}\label{eq:Msharp-lip}
    |M^\sharp_\sigma(\{p_1, p_2, \dots, p_k\})-M_\sigma^\sharp(\{p'_1, p'_2, \dots, p'_k\})| \le \sum_{i=1}^k |p_i-p'_i|.
\end{equation}
Already at this point, we have reduced our problem to showing that the Lipschitz function $M^\sharp_\sigma$ is positive over a certain compact subset of $\R^{12}$.
While this could theoretically be proven with a finite amount of computation (by properly partitioning the domain into smaller domains and using the Lipschitz continuity of $M^\sharp_\sigma$ to obtain lower bounds for $M^\sharp_\sigma$ on each smaller domain), such a task is out of reach without further simplifications. To convince oneself of the unfeasibility, observe that if we partition each coordinate in $50$ intervals (which is likely to be insufficient for our purposes) we would end up with $50^{12}\approx 2.5\cdot 10^{20}$ smaller domains and for each of them we would have to compute $M^\sharp_\sigma$. Assuming that a single call to $M^\sharp_\sigma$ requires\footnote{On a modern laptop our implementation requires $30$ microseconds. Thus our estimate is optimistic.} $1$ microsecond, the total computation would require three million years.

To proceed, let us begin by observing that, by rotational invariance, we may assume that $p_1=(0, y)$ for some $0\le y\le 1$. This observation reduces the dimension of the domain from $12$ to $11$.
Furthermore, by symmetry of the roles $p_1, p_2$ we can assume without loss of generality that $|p_1|\ge|p_2|$. Lastly, we may also assume that the first coordinate of $p_2$ is nonnegative.
These assumptions are contained in the following definition.

\begin{definition}
    Let $\Omega_\sigma \subseteq \R^2\times\R^2$ be the set of ordered pairs $(p_1, p_2)\in\R^2\times\R^2$ such that $(p_1,p_2)\in\Delta_\sigma(O, 1)$ with $p_1 = (0,y_1)$ for some $0\le y_1\le 1$, $p_2=(x_2, y_2)$ with $x_2\ge 0$, and $|p_1|\ge |p_2|$.
\end{definition}
Observe that $\Omega_\sigma$ is a $3$-dimensional bounded subset of $\R^2\times\R^2$.

\begin{definition}
    Given $p_1, p_2\in\R^2$, $r>0$, and $m\in\R$, let 
    \begin{equation*}
        X_\sigma(p_1, p_2, r, m) := \{q\in \overline{B_r(p_1)}:\, M_\sigma^\sharp(\{O, p_1, p_2, q\}) \le m\}.
    \end{equation*}
\end{definition}
Notice that in the definition of $X_\sigma(p_1, p_2, r, m)$ the order of $p_1$ and $p_2$ matters.

Fix\footnote{It would be more natural to fix $(p_1, p_2)\in\Omega_\sigma$. We do not force them to belong to $\Omega_\sigma$ because later on we will have to discretize the space and the pair $(p_1, p_2)$ may end up being very close to but not exactly in $\Omega_\sigma$.} $p_1, p_2 \in \R^2$ and consider the following sequence of increasingly weaker statements:
\begin{enumerate}[itemsep=4pt]
    \item[$(S.1)_\sigma$] $M_\sigma^\sharp(\{O, p_1, p_2\})>0$,
    \item[$(S.2)_\sigma$] The diameter of $X_\sigma(p_2, p_1, 1, 0)$ is $<2\sigma$.
    \item[$(S.3)_\sigma$] There is no choice of four points $q_{11}, q_{12}, q_{21}, q_{22}\in\R^2$ such that:
    \begin{itemize}
        \item $q_{11}, q_{12}\in X_\sigma(p_1, p_2, 1, 0)$;
        \item $q_{21}, q_{22}\in X_\sigma(p_2, p_1, 1, 0)$;
        \item $|q_{11}-q_{12}| \ge 2\sigma$;
        \item $|q_{21}-q_{22}| \ge 2\sigma$;
        \item $M_\sigma^\sharp(\{O, p_1, p_2, a, b\})\le 0$ for all $\{a, b\} \subset \{q_{11}, q_{12}, q_{21}, q_{22}\}$.
    \end{itemize}
    \item[$(S.4)_\sigma$] For any four points $q_{11}, q_{12}, q_{21}, q_{22}$ satisfying the first four conditions of the previous statement we have $M_\sigma^\sharp(\{O\}\cup P\cup Q) > 0$.
\end{enumerate}
As will be clear later on, to produce an efficient algorithm, it is crucial that $(S.3)_\sigma$ involves at most two points out of $\{q_{11}, q_{12}, q_{21}, q_{22}\}$ for each of the constraints it states. This will allow us to reduce substantially the dimension of the domain we are interested in.

\begin{lemma}\label{lem:algo_statement_implications}
    Fix $p_1, p_2\in \R^2$. Then
    $(S.1)_\sigma \implies (S.2)_\sigma \implies (S.3)_\sigma \implies (S.4)_\sigma$.
\end{lemma}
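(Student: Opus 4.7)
The plan is to observe that all three implications rest on one structural fact, namely that $M_\sigma^\sharp$ is monotone under set inclusion. Since $M_\sigma^\sharp(P) = \max_{P'\subseteq P} M_\sigma^{+,\sharp}(P')$ by \eqref{eq:def_M_sharp_flat}, enlarging $P$ can only enlarge the family of subsets over which the maximum is taken, hence $M_\sigma^\sharp$ is nondecreasing in its argument. I would state this observation first and then crank through the three implications in order, all of which become essentially contrapositive arguments.

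For $(S.1)_\sigma \Rightarrow (S.2)_\sigma$: if $M_\sigma^\sharp(\{O, p_1, p_2\}) > 0$, then monotonicity gives $M_\sigma^\sharp(\{O, p_2, p_1, q\}) \geq M_\sigma^\sharp(\{O, p_1, p_2\}) > 0$ for every $q \in \overline{B_1(p_2)}$, so the defining inequality of $X_\sigma(p_2, p_1, 1, 0)$ is violated and the set is empty. With the convention $\diam(\emptyset) = 0 < 2\sigma$, the conclusion follows.

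For $(S.2)_\sigma \Rightarrow (S.3)_\sigma$: I argue by contraposition. Suppose that four points $q_{11}, q_{12}, q_{21}, q_{22}$ verify all the bullets of $(S.3)_\sigma$. Then in particular $q_{21}, q_{22} \in X_\sigma(p_2, p_1, 1, 0)$ and $|q_{21}-q_{22}| \geq 2\sigma$, so $\diam(X_\sigma(p_2, p_1, 1, 0)) \geq 2\sigma$, contradicting $(S.2)_\sigma$.

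For $(S.3)_\sigma \Rightarrow (S.4)_\sigma$: fix any four points $q_{11}, q_{12}, q_{21}, q_{22}$ satisfying the first four bullets of $(S.3)_\sigma$. By $(S.3)_\sigma$ the fifth bullet must fail, so there exists a pair $\{a,b\} \subset \{q_{11}, q_{12}, q_{21}, q_{22}\}$ with $M_\sigma^\sharp(\{O, p_1, p_2, a, b\}) > 0$. Monotonicity then yields $M_\sigma^\sharp(\{O\} \cup P \cup Q) \geq M_\sigma^\sharp(\{O, p_1, p_2, a, b\}) > 0$, which is exactly $(S.4)_\sigma$. There is no serious obstacle in this lemma; its entire purpose is to replace the $12$-dimensional search for a ``bad'' eight-tuple by three progressively simpler conditions whose verification can be phased into lower-dimensional pieces, and the real work lies in the subsequent discretization and linear-programming-based check of $(S.3)_\sigma$ on a grid of pairs $(p_1, p_2) \in \Omega_\sigma$, together with the Lipschitz bound \eqref{eq:Msharp-lip}.
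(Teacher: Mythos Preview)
Your proof is correct and follows essentially the same route as the paper's: both rely on the monotonicity of $M_\sigma^\sharp$ under set inclusion to show that $(S.1)_\sigma$ forces $X_\sigma(p_2,p_1,1,0)=\emptyset$, that a failure of $(S.3)_\sigma$ produces two points in $X_\sigma(p_2,p_1,1,0)$ at distance $\geq 2\sigma$, and that the surviving ``bad'' pair $\{a,b\}$ from $(S.3)_\sigma$ bounds $M_\sigma^\sharp(\{O\}\cup P\cup Q)$ from below. Your explicit isolation of the monotonicity fact up front is a slight expository improvement over the paper, which uses it implicitly in each step.
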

\begin{proof}
    If $(S.1)_\sigma$ holds, then $M_\sigma^\sharp(\{O, p_1, p_2, q\})\ge M_\sigma^\sharp(\{O, p_1, p_2\})>0$ for all $q\in\R^2$ and thus $X_\sigma(p_2, p_1, 1, 0)$ is empty. Hence $(S.2)_\sigma$ holds.

    If $(S.2)_\sigma$ holds, then it is impossible to find $q_{21}, q_{22}\in X_\sigma(p_2, p_1, 1, 0)$ such that $|q_{21}-q_{22}| \ge 2\sigma$ and thus $(S.3)_\sigma$ holds.

    If $(S.3)_\sigma$ holds, for any four points $q_{11}, q_{12}, q_{21}, q_{22}$ satisfying the first four conditions of $(S.3)_\sigma$ there exist $\{a, b\}\subset \{q_{11}, q_{12}, q_{21}, q_{22}\}$ such that $M_\sigma^\sharp(\{O, p_1, p_2, a, b\}) > 0$. Therefore,
    \begin{equation*}
        M_\sigma^\sharp(\{O,p_1,p_2,q_{11},q_{12},q_{21},q_{22}\}) 
        \ge
        M_\sigma^\sharp(\{O, p_1, p_2, a, b\}) > 0
    \end{equation*}
    as desired in $(S.4)_\sigma$.
\end{proof}

We have already observed that in order to prove \cref{eq:2-min-max-0.7-positive} it is sufficient to show the validity of $(S.4)_\sigma$ for all $(p_1, p_2)\in\Omega_\sigma$. To this aim, our (idealized) algorithm would iterate over all choices of $(p_1, p_2)\in\Omega_\sigma$ and perform the following sequence of checks:
\begin{enumerate}
    \item Check if $(S.1)_\sigma$ holds. If it holds, exit.
    \item Check if $(S.2)_\sigma$ holds. If it holds, exit.
    \item Check that $(S.3)_\sigma$ holds.
\end{enumerate}
Thanks to \cref{lem:algo_statement_implications}, this is sufficient to prove that $(S.4)_\sigma$ holds for all $(p_1, p_2)\in\Omega_\sigma$.

\subsection{Discretization of the first generation}
In order to transform the strategy described above into an effective algorithm to be executed on a machine, we adopt a discretization argument that reduces our scheme to a finite number of checks.
Let us now describe how we discretize the first generation of points (i.e., $p_1$ and $p_2$).

Given a positive real number $r>0$ and a point $x\in\R^d$, let $Q^d_r(x):=x + [-\tfrac r2,\tfrac r2]^d$.
Fix $\delta>0$; define $m^\delta := \sqrt2\delta$ and $r^\delta := 1 + \tfrac{\delta}{\sqrt2}$.

For a pair of points $(p_1,\, p_2)\in\R^2\times\R^2$, we consider the following sequence of statements:
\begin{enumerate}[itemsep=4pt]
    \item[$(S.1)_\sigma^\delta$] $M_\sigma^\sharp(\{O, p_1, p_2\})>m^\delta$,
    \item[$(S.2)_\sigma^\delta$] The diameter of $X_\sigma(p_2, p_1, r^\delta, m^\delta)$ is $<2\sigma$.
    \item[$(S.3)_\sigma^\delta$] There is no choice of four points $q_{11}, q_{12}, q_{21}, q_{22}\in\R^2$ such that:
    \begin{itemize}
        \item $q_{11}, q_{12}\in X_\sigma(p_1, p_2, r^\delta, m^\delta)$; 
        \item $q_{21}, q_{22}\in X_\sigma(p_2, p_1, r^\delta, m^\delta)$;
        \item $|q_{11}-q_{12}| \ge 2\sigma$;
        \item $|q_{21}-q_{22}| \ge 2\sigma$;
        \item $M_\sigma^\sharp(\{O, p_1, p_2, a, b\})\le m^\delta$ for all $\{a, b\} \subset \{q_{11}, q_{12}, q_{21}, q_{22}\}$.
    \end{itemize}
\end{enumerate}

We will repeatedly use the following simple observation.
If $(p'_1, p'_2)\in Q^4_\delta(p_1, p_2)$, then $|p'_1-p_1|, |p'_2-p_2| \le \frac\delta{\sqrt2}$. And in particular, $\max\{1+|p_1-p'_1|, 1+|p_2-p'_2|\}\le r^\delta$.

Let us show that the discretized statements $(S.1)_\sigma^\delta$, $(S.2)_\sigma^\delta$, $(S.3)_\sigma^\delta$ imply the validity of the original statements $(S.1)_\sigma$, $(S.2)_\sigma$, $(S.3)_\sigma$ in an open neighborhood.
\begin{lemma}\label{lem:first_discretization}
    Let $\delta>0$ be a positive real parameter and fix $(p_1, p_2)\in\R^2\times\R^2$. 
    Then the following statements hold.
    \begin{enumerate}[itemsep=4pt]
        \item If $(S.1)_\sigma^\delta$ holds for $(p_1, p_2)$, then $(S.1)_\sigma$ holds for all $(p'_1, p'_2)\in Q^4_\delta(p_1, p_2)$. 
        \item If $(S.2)_\sigma^{\delta}$ holds for $(p_1, p_2)$, then $(S.2)_\sigma$ holds for all $(p'_1, p'_2)\in Q^4_\delta(p_1, p_2)$.
        \item If $(S.3)_\sigma^{\delta}$ holds for $(p_1, p_2)$, then $(S.3)_\sigma$ holds for all $(p'_1, p'_2)\in Q^4_\delta(p_1, p_2)$.
    \end{enumerate}
\end{lemma}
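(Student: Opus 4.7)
The plan is to obtain all three implications from a single observation: with the chosen calibration $m^\delta=\sqrt2\delta$ and $r^\delta=1+\delta/\sqrt2$, the Lipschitz bound \eqref{eq:Msharp-lip} for $M_\sigma^\sharp$ absorbs exactly the perturbation allowed by the box $Q^4_\delta(p_1,p_2)$. Concretely, if $(p'_1,p'_2)\in Q^4_\delta(p_1,p_2)$, then $|p_i-p'_i|\le \delta/\sqrt2$, so
\[
|p_1-p'_1|+|p_2-p'_2|\le \sqrt 2\,\delta=m^\delta,
\qquad 1+|p_i-p'_i|\le r^\delta.
\]
Throughout, I will apply \eqref{eq:Msharp-lip} with the natural bijection $O\mapsto O,\ p_i\mapsto p'_i$ and the identity on any remaining points $a,b,q,\ldots$, so the right-hand side only picks up $|p_1-p'_1|+|p_2-p'_2|\le m^\delta$.

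For (1), the Lipschitz estimate applied to the three-point sets $\{O,p_1,p_2\}$ and $\{O,p'_1,p'_2\}$ gives
\[
M_\sigma^\sharp(\{O,p'_1,p'_2\})\ \ge\ M_\sigma^\sharp(\{O,p_1,p_2\})-m^\delta\ >\ 0,
\]
which is $(S.1)_\sigma$. For (2), I will establish the inclusion
\[
X_\sigma(p'_2,p'_1,1,0)\ \subseteq\ X_\sigma(p_2,p_1,r^\delta,m^\delta).
\]
Indeed, for $q\in X_\sigma(p'_2,p'_1,1,0)$ one has $|q-p_2|\le |q-p'_2|+|p'_2-p_2|\le r^\delta$, and applying \eqref{eq:Msharp-lip} to $\{O,p_2,p_1,q\}$ versus $\{O,p'_2,p'_1,q\}$ yields $M_\sigma^\sharp(\{O,p_2,p_1,q\})\le 0+m^\delta$. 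Since diameter is monotone under inclusion, $\diam X_\sigma(p'_2,p'_1,1,0)\le \diam X_\sigma(p_2,p_1,r^\delta,m^\delta)<2\sigma$, which is $(S.2)_\sigma$.

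For (3), I will argue by contradiction. Suppose four points $q_{11},q_{12},q_{21},q_{22}$ witness the failure of $(S.3)_\sigma$ for some $(p'_1,p'_2)\in Q^4_\delta(p_1,p_2)$. The inclusion from (2) (used symmetrically on both pairs $(p'_1,p'_2)$ and $(p'_2,p'_1)$) shows that $q_{11},q_{12}\in X_\sigma(p_1,p_2,r^\delta,m^\delta)$ and $q_{21},q_{22}\in X_\sigma(p_2,p_1,r^\delta,m^\delta)$. The two distance constraints $|q_{11}-q_{12}|\ge 2\sigma$ and $|q_{21}-q_{22}|\ge 2\sigma$ transfer verbatim, as they involve no change of variables. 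Finally, for any pair $\{a,b\}\subset\{q_{11},q_{12},q_{21},q_{22}\}$, applying \eqref{eq:Msharp-lip} to the five-point sets $\{O,p_1,p_2,a,b\}$ and $\{O,p'_1,p'_2,a,b\}$ gives
\[
M_\sigma^\sharp(\{O,p_1,p_2,a,b\})\le M_\sigma^\sharp(\{O,p'_1,p'_2,a,b\})+m^\delta\le m^\delta.
\]
Thus the same four points $q_{ij}$ would witness the failure of $(S.3)_\sigma^\delta$ for $(p_1,p_2)$, contradicting our hypothesis.

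There is no genuine obstacle here; the proof is essentially a calibration exercise. The only point requiring care is that in the definition of $(S.3)_\sigma^\delta$ the auxiliary threshold $m^\delta$ and enlarged radius $r^\delta$ must be tuned so that a single application of \eqref{eq:Msharp-lip} absorbs the perturbation of $(p_1,p_2)$ in every one of the clauses simultaneously — this is exactly why each of the constraints in $(S.3)_\sigma$ has been written to involve at most two of the $q_{ij}$ at a time, keeping the Lipschitz deficit on each clause bounded by $|p_1-p'_1|+|p_2-p'_2|\le m^\delta$ rather than by a larger multiple.
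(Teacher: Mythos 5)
Your proof is correct and follows essentially the same route as the paper's: part (1) is a direct application of the Lipschitz bound \eqref{eq:Msharp-lip}, part (2) establishes the inclusion $X_\sigma(p'_2,p'_1,1,0)\subseteq X_\sigma(p_2,p_1,r^\delta,m^\delta)$ and uses monotonicity of the diameter, and part (3) is the same contrapositive argument transferring a witness of the failure of $(S.3)_\sigma$ into a witness of the failure of $(S.3)_\sigma^\delta$. No gaps.
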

\begin{proof}
    \begin{enumerate}
        \item By the Lipschitz continuity of $M_\sigma^\sharp$ (see \cref{eq:Msharp-lip}), assuming that $(S.1)_\sigma$ holds for $(p_1, p_2)$, we have
        \begin{equation*}
            M_\sigma^\sharp(\{O, p_1', p_2'\}) 
            \ge 
            M_\sigma^\sharp(\{O, p_1, p_2\}) 
            - \big(|p_1-p_1'| + |p_2-p_2'|\big)
            > m^\delta 
            - \big(\tfrac\delta{\sqrt 2} + \tfrac\delta{\sqrt 2}\big)
            = 0.
        \end{equation*}
        
        \item Take $q\in X_\sigma(p'_2, p'_1, 1, 0)$.
        Then $|q-p_2|\le |q-p'_2| + |p'_2-p_2| \le r^\delta$.
        Moreover, by \cref{eq:Msharp-lip}, we have
        \begin{equation*}
            M_\sigma^\sharp(\{O, p_1, p_2, q\}) 
            \le 
            M_\sigma^\sharp(\{O, p'_1, p'_2, q\})
            + \big(
            |p_1-p_1'| + |p_2-p_2'|
            \big)
            \le m^\delta.
        \end{equation*}
        Therefore, we have shows that $q\in X_\sigma(p_2, p_1, r^\delta, m^\delta)$. Since $q$ was arbitrary, we deduce that $X_\sigma(p'_2, p'_1, 1, 0)\subseteq X_\sigma(p_2, p_1, r^\delta, m^\delta)$. Then, the diameter of $X_\sigma(p'_2, p'_1, 1, 0)$ is smaller or equal to the diameter of $X_\sigma(p_2, p_1, r^\delta, m^\delta)$ and so the desired implication follows.
        
        \item Assume that $(S.3)_\sigma$ is false for $(p'_1, p'_2) \in Q_\delta^4(p_1, p_2)$. Let $q_{11}, q_{12}, q_{21}, q_{22}$ be the four points showing that $(S.3)_\sigma$ does not hold. We observed in the proof of statement (2) that $q_{11}, q_{12}\in X_\sigma(p_1, p_2, r^\delta, m^\delta)$ and $q_{21}, q_{22}\in X_\sigma(p_1, p_2, r^\delta, m^\delta)$. Furthermore, for any $\{a, b\}\subseteq\{q_{11}, q_{11}, q_{11}, q_{11}\}$,
        \begin{equation*}
            M_\sigma^\sharp(\{O, p_1, p_2, a, b\}) 
            \le M_\sigma^\sharp(\{O, p'_1, p'_2, a, b\}) + m^\delta 
            \le m^\delta.
        \end{equation*}
        Thus, we have discovered that also $(S.3)_\sigma^\delta$ is false for $(p_1, p_2)$, which concludes the proof.
    \end{enumerate}
\end{proof}

\subsection{Discretization of the second generation}
The statements $(S.2)_\sigma^\delta$ and $(S.3)_\sigma^\delta$ require some further discretization to become machine-verifiable. 

\begin{definition}
    Given $(p_1, p_2)\in\R^2\times\R^2$, $r>0$, $m\in\R$, and $\delta_1>0$, denote by $X_P^{\delta_1}(r, m)$ the discrete set
    \begin{equation*}
        X_\sigma^{\delta_1}(p_1, p_2, r, m) := 
            X_\sigma\Big(p_1, p_2, r+\tfrac{\delta_1}{\sqrt2}, 
            m + \tfrac{\delta_1}{\sqrt2}\Big) 
            \cap 
            \big(p_1+\delta_1\Z^2\big).
    \end{equation*}
\end{definition}
Let us begin by showing the decisive property of $X_\sigma^{\delta_1}(p_1, p_2, r, m)$.
\begin{lemma}\label{lem:X_net}
    The set $X_\sigma^{\delta_1}(p_1, p_2, r, m)$ is a $\tfrac{\delta_1}{\sqrt2}$-net for $X_\sigma(p_1, p_2, r, m)$.
\end{lemma}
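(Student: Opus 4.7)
The plan is to verify the two defining properties of a net for an arbitrary $q\in X_\sigma(p_1,p_2,r,m)$, namely the existence of a nearby point in $X_\sigma^{\delta_1}(p_1,p_2,r,m)$ at distance at most $\delta_1/\sqrt 2$.

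First I would pick the ``candidate'' $q'$ by projecting $q$ onto the shifted lattice $p_1+\delta_1\Z^2$: each cell of this lattice is a translate of $[-\tfrac{\delta_1}{2},\tfrac{\delta_1}{2}]^2$, so any point in $\R^2$ lies within Euclidean distance at most $\tfrac{\delta_1}{\sqrt 2}$ (half the diagonal of a $\delta_1$-cube) from some lattice point. Choose $q'\in p_1+\delta_1\Z^2$ with $|q-q'|\le \tfrac{\delta_1}{\sqrt 2}$.

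Next I would check that $q'$ indeed belongs to $X_\sigma(p_1,p_2,r+\tfrac{\delta_1}{\sqrt 2},m+\tfrac{\delta_1}{\sqrt 2})$. The containment in the enlarged ball follows from the triangle inequality:
\begin{equation*}
|q'-p_1|\le |q'-q|+|q-p_1|\le \tfrac{\delta_1}{\sqrt 2}+r,
\end{equation*}
which places $q'$ in $\overline{B_{r+\delta_1/\sqrt 2}(p_1)}$. The inequality on $M_\sigma^\sharp$ comes from the Lipschitz bound \eqref{eq:Msharp-lip}, applied to the two $4$-point sets $\{O,p_1,p_2,q\}$ and $\{O,p_1,p_2,q'\}$ with the bijection fixing $O,p_1,p_2$ and sending $q\mapsto q'$:
\begin{equation*}
M_\sigma^\sharp(\{O,p_1,p_2,q'\})\le M_\sigma^\sharp(\{O,p_1,p_2,q\})+|q-q'|\le m+\tfrac{\delta_1}{\sqrt 2}.
\end{equation*}
Combining these two inequalities gives $q'\in X_\sigma^{\delta_1}(p_1,p_2,r,m)$, and together with $|q-q'|\le \tfrac{\delta_1}{\sqrt 2}$ this is exactly the net property.

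There is no real obstacle here: the statement is essentially a bookkeeping consequence of the fact that $X_\sigma^{\delta_1}$ was deliberately defined by enlarging both the radius and the level $m$ by exactly $\delta_1/\sqrt 2$, which is the worst-case distance from a point of $\R^2$ to the shifted lattice $p_1+\delta_1\Z^2$, combined with the $1$-Lipschitz dependence of $M_\sigma^\sharp$ on each of its points via \eqref{eq:Msharp-lip}.
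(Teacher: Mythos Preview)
Your proof is correct and follows essentially the same approach as the paper: pick a nearest point $q'$ in the shifted lattice $p_1+\delta_1\Z^2$ (at distance at most $\delta_1/\sqrt2$), then use the triangle inequality and the Lipschitz bound \eqref{eq:Msharp-lip} to verify that $q'$ lies in the enlarged set $X_\sigma(p_1,p_2,r+\tfrac{\delta_1}{\sqrt2},m+\tfrac{\delta_1}{\sqrt2})$. The only cosmetic difference is that the paper first shows the entire ball $\overline{B_{\delta_1/\sqrt2}(q)}$ is contained in the enlarged set and then selects the lattice point, whereas you select the lattice point first and then verify membership directly.
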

\begin{proof}
    Fix an arbitrary point $q\in X_\sigma(p_1, p_2, r, m)$. We want to show that there is $q'\in X_\sigma^{\delta_1}(p_1, p_2, r, m)$ such that $|q-q'| \le \tfrac{\delta_1}{\sqrt2}$.

    Observe that:
    \begin{itemize}
        \item Since $q\in \overline{B_r(p_1)}$, then $\overline{B_{\tfrac{\delta_1}{\sqrt2}}(q)} \subseteq \overline{B_{r+\tfrac{\delta_1}{\sqrt2}}(p_1)}$.
        \item Since $M_\sigma^\sharp(\{O, p_1, p_2, q\})\le m$, by the Lipschitz continuity of $M_\sigma^\sharp$ (see \cref{eq:Msharp-lip}), we have $M_\sigma^\sharp(\{O, p_1, p_2, \tilde q\})\le m+\tfrac{\delta_1}{\sqrt2}$ for any $\tilde q\in \overline{B_{\tfrac{\delta_1}{\sqrt2}}(q)}$.
    \end{itemize}
    The two observations imply that $\overline{B_{\tfrac{\delta_1}{\sqrt2}}(q)}\subseteq X_\sigma(p_1, p_2, r+\tfrac{\delta_1}{\sqrt2}, m+\tfrac{\delta_1}{\sqrt2})$.
    
    Take $q'\in \overline{B_{\tfrac{\delta_1}{\sqrt2}}(q)}\cap(p_1+\delta_1\Z^2)$; thanks to what we have just shown we conclude that $q'\in  X_\sigma^{\delta_1}(p_1, p_2, r, m)$ as desired.
\end{proof}

Given $\delta_1>0$, consider the following statements
\begin{enumerate}[itemsep=4pt]
    \item[$(S.2)_\sigma^{\delta,\delta_1}$] The diameter of $X_\sigma^{\delta_1}(p_2, p_1, r^\delta, m^\delta)$ is $<2\sigma - \sqrt2\delta_1$.
    \item[$(S.3)_\sigma^{\delta,\delta_1}$] There is no choice of four points $q_{11}, q_{12}, q_{21}, q_{22}\in\R^2$ such that:
    \begin{itemize}
        \item $q_{11}, q_{12}\in 
        X_\sigma^{\delta_1}(p_1, p_2, 
            r^\delta, 
            m^\delta)$;
        \item $q_{21}, q_{22}\in 
        X_\sigma^{\delta_1}(p_2, p_1, 
            r^\delta, 
            m^\delta)$;
        \item $|q_{11}-q_{12}| \ge 2\sigma-\sqrt2\delta_1$;
        \item $|q_{21}-q_{22}| \ge 2\sigma-\sqrt2\delta_1$;
        \item $M_\sigma^\sharp(\{O, p_1, p_2, a, b\})\le m^\delta + \sqrt2\delta_1$ for all $\{a, b\} \subset \{q_{11}, q_{12}, q_{21}, q_{22}\}$.
    \end{itemize}
\end{enumerate}

We prove that the twice-discretized statements $(S.2)_\sigma^{\delta,\delta_1}$ and $(S.3)_\sigma^{\delta,\delta_1}$ imply the once-discretized versions $(S.2)_\sigma^\delta$ and $(S.3)_\sigma^\delta$.
\begin{lemma}\label{lem:second_discretization}
    Let $\delta>0$ and $\delta_1>0$ be two positive real parameters and fix $(p_1, p_2)\in\R^2\times\R^2$.
    Then, the following statements hold.
    \begin{enumerate}
        \setcounter{enumi}{1}
        \item If $(S.2)_\sigma^{\delta, \delta_1}$ holds for $(p_1, p_2)$, then also $(S.2)_\sigma^\delta$ holds for $(p_1, p_2)$.
        \item If $(S.3)_\sigma^{\delta, \delta_1}$ holds for $(p_1, p_2)$, then also $(S.3)_\sigma^\delta$ holds for $(p_1, p_2)$.
    \end{enumerate}
\end{lemma}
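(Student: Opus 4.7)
The plan is to exploit the discrete net property established in \cref{lem:X_net} together with the Lipschitz bound \cref{eq:Msharp-lip} for $M_\sigma^\sharp$. The two implications are structurally analogous: we take the continuous objects appearing in $(S.2)_\sigma^\delta$ or $(S.3)_\sigma^\delta$, approximate them by points of the corresponding discrete set up to an error of $\tfrac{\delta_1}{\sqrt 2}$, and then observe that the slack $\sqrt 2\delta_1$ built into $(S.2)_\sigma^{\delta,\delta_1}$ and $(S.3)_\sigma^{\delta,\delta_1}$ is exactly what compensates for the approximation error in pairwise distances (each approximation contributes $\tfrac{\delta_1}{\sqrt 2}$, and two points contribute $\sqrt 2\delta_1$ in total) and in $M_\sigma^\sharp$-values (by the Lipschitz bound with constant $1$ per point).

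For implication (2), I would argue by contrapositive. Assume $(S.2)_\sigma^\delta$ fails, so there exist $q, q' \in X_\sigma(p_2, p_1, r^\delta, m^\delta)$ with $|q-q'| \ge 2\sigma$. By \cref{lem:X_net} there exist $\tilde q, \tilde q' \in X_\sigma^{\delta_1}(p_2, p_1, r^\delta, m^\delta)$ with $|q-\tilde q|, |q'-\tilde q'| \le \tfrac{\delta_1}{\sqrt 2}$, whence
\[
|\tilde q - \tilde q'| \ge |q-q'| - |q-\tilde q| - |q'-\tilde q'| \ge 2\sigma - \sqrt 2\,\delta_1,
\]
showing that the diameter of $X_\sigma^{\delta_1}(p_2, p_1, r^\delta, m^\delta)$ is at least $2\sigma-\sqrt 2\delta_1$, contradicting $(S.2)_\sigma^{\delta,\delta_1}$.

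For implication (3), again argue by contrapositive. Suppose $(S.3)_\sigma^\delta$ fails and let $q_{11}, q_{12}, q_{21}, q_{22}$ be the four offending points. Apply \cref{lem:X_net} to each $q_{ij}$: there exist $\tilde q_{11}, \tilde q_{12} \in X_\sigma^{\delta_1}(p_1, p_2, r^\delta, m^\delta)$ and $\tilde q_{21}, \tilde q_{22} \in X_\sigma^{\delta_1}(p_2, p_1, r^\delta, m^\delta)$ with $|q_{ij}-\tilde q_{ij}| \le \tfrac{\delta_1}{\sqrt 2}$ for all $i,j$. Exactly as in (2) we obtain $|\tilde q_{i1}-\tilde q_{i2}| \ge 2\sigma-\sqrt 2\delta_1$ for $i=1,2$. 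For the last condition, fix any $\{a,b\} \subset \{\tilde q_{11}, \tilde q_{12}, \tilde q_{21}, \tilde q_{22}\}$ and let $\{q_a, q_b\}$ be the corresponding pair in $\{q_{11}, q_{12}, q_{21}, q_{22}\}$. Applying \cref{eq:Msharp-lip} to the two $5$-point sets $\{O,p_1,p_2,a,b\}$ and $\{O,p_1,p_2,q_a,q_b\}$ (matching $O,p_1,p_2$ to themselves),
\[
M_\sigma^\sharp(\{O,p_1,p_2,a,b\}) \le M_\sigma^\sharp(\{O,p_1,p_2,q_a,q_b\}) + |a-q_a| + |b-q_b| \le m^\delta + \sqrt 2\,\delta_1.
\]
Thus $\tilde q_{11}, \tilde q_{12}, \tilde q_{21}, \tilde q_{22}$ witness the failure of $(S.3)_\sigma^{\delta,\delta_1}$, contradicting our assumption. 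No step is really an obstacle here; the only thing to watch is that the quantitative slack built into the definitions of $(S.2)_\sigma^{\delta,\delta_1}$ and $(S.3)_\sigma^{\delta,\delta_1}$ (namely the terms $\sqrt 2\delta_1$) is tuned precisely to absorb the two contributions of $\tfrac{\delta_1}{\sqrt 2}$ coming from approximating two points at once by the net.
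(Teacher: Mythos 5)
Your proof is correct and follows essentially the same route as the paper's: both implications rest on the $\tfrac{\delta_1}{\sqrt2}$-net property from \cref{lem:X_net} combined with the Lipschitz bound \cref{eq:Msharp-lip}, with the $\sqrt2\delta_1$ slack absorbing the two per-point errors. The only cosmetic difference is that for (2) the paper states the diameter comparison $\diam(X_\sigma)\le\diam(X_\sigma^{\delta_1})+2\tfrac{\delta_1}{\sqrt2}$ directly rather than via your contrapositive with a pair of points.
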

\begin{proof}
    \begin{enumerate}
        \setcounter{enumi}{1}
        \item Thanks to \cref{lem:X_net}, we know that $X_\sigma^{\delta_1}(p_2, p_1, r^\delta, m^\delta)$ is a $\tfrac{\delta_1}{\sqrt2}$-net for $X_\sigma(p_2, p_1, r^\delta, m^\delta)$. 
        Therefore, $\text{diam}(X_\sigma(p_2, p_1, r^\delta, m^\delta)) \le \text{diam}(X_\sigma^{\delta_1}(p_2, p_1, r^\delta, m^\delta)) + 2\tfrac{\delta_1}{\sqrt2}$ which proves the desired statement.
        \item Assume that $(S.3)_\sigma^\delta$ is false and let $q_{11}, q_{12}, q_{21}, q_{22}$ be the four points showing that $(S.3)_\sigma^\delta$ does not hold. 
        
        Thanks to \cref{lem:X_net}, we can find $q'_{11}, q'_{12}\in X_\sigma^{\delta_1}(p_1, p_2, r^\delta, m^\delta)$ and $q'_{21}, q'_{22}\in X_\sigma^{\delta_1}(p_2, p_1, r^\delta, m^\delta)$ such that $|q_{ij}-q'_{ij}| \le \tfrac{\delta_1}{\sqrt2}$ for all $i, j\in \{1, 2\}$. By definition of $q'_{11}, q'_{12}, q'_{21}, q'_{22}$, the first four conditions of $(S.3)_\sigma^{\delta,\delta_1}$ are satisfied by these four points. The fifth one follows from $|q_{ij}-q'_{ij}|\le \tfrac{\delta_1}{\sqrt2}$ together with the Lipschitz continuity of $M_\sigma^\sharp$ (see \cref{eq:Msharp-lip}).
    \end{enumerate}
\end{proof}

\subsection{Description of the algorithm}
We are now ready to describe in detail the algorithm we employ to prove \cref{eq:2-min-max-0.7-positive}.

Fix $\sigma:=0.7$ and $\delta:= 0.008$.
Let $\Omega'_\sigma\subseteq \R^2\times\R^2$ be a finite set such that 
\begin{equation*}
    \Omega_\sigma \subseteq \bigcup_{(p_1, p_2)\in \Omega'_\sigma} Q_\delta^4(p_1, p_2).
\end{equation*}
For each $(p_1, p_2)\in\Omega'_\sigma$:
\begin{enumerate}
    \item We try to show that $(S.1)_\sigma^\delta$ holds for $(p_1, p_2)$. If we fail, we go to the next step.
    \item We try to show that $(S.2)_\sigma^\delta$ holds for $(p_1, p_2)$. To this purpose, we try to show that $(S.2)_\sigma^{\delta,\delta_1}$ holds for at least one value of $\delta_1$ in $\{0.05, 0.02, 0.01\}$. If we fail, we go to the next step.
    \item We try to show that $(S.3)_\sigma^\delta$ holds for $(p_1, p_2)$.
    To this purpose, we try to show that $(S.3)_\sigma^{\delta,\delta_1}$ holds for $\delta_1:=0.03$. If we fail, we go to the next step.
    \item The algorithm fails.
\end{enumerate}

\begin{proposition}
    If the above-described algorithm terminates successfully then \cref{eq:2-min-max-0.7-positive} holds.
\end{proposition}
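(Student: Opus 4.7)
The plan is to chain together the three discretization lemmas so that the success of the algorithm on the finite grid $\Omega'_\sigma$ forces $(S.4)_\sigma$ to hold for every admissible first-generation pair, and then to deduce $2\oneminM_\sigma>0$ by a compactness argument.

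First, I would fix an arbitrary $(p'_1, p'_2) \in \Omega_\sigma$ and, using the covering property $\Omega_\sigma \subseteq \bigcup_{(p_1,p_2) \in \Omega'_\sigma} Q_\delta^4(p_1,p_2)$, pick a grid point $(p_1,p_2) \in \Omega'_\sigma$ with $(p'_1, p'_2) \in Q_\delta^4(p_1, p_2)$. Since the algorithm terminates successfully at $(p_1,p_2)$, at least one of $(S.1)_\sigma^\delta$, $(S.2)_\sigma^{\delta,\delta_1}$ (for some $\delta_1 \in \{0.05, 0.02, 0.01\}$), or $(S.3)_\sigma^{\delta, 0.03}$ must hold there. \cref{lem:second_discretization} then produces the once-discretized statement $(S.i)_\sigma^\delta$ at $(p_1,p_2)$ (trivially in the case $i=1$); \cref{lem:first_discretization} propagates this to the undiscretized $(S.i)_\sigma$ at $(p'_1, p'_2)$; and \cref{lem:algo_statement_implications} closes the chain with $(S.4)_\sigma$ at $(p'_1, p'_2)$.

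Next, to extend $(S.4)_\sigma$ from $\Omega_\sigma$ to all first-generation pairs $(p_1, p_2) \in \overline{\Delta_\sigma(O, 1)}$, I would apply (in order) a rotation about $O$ putting $p_1$ on the nonnegative $y$-axis, a reflection across that axis making the first coordinate of $p_2$ nonnegative, and a possible swap of labels to enforce $|p_1| \geq |p_2|$. Each of these operations is an isometry of $\R^2$ and preserves all pairwise distances inside any augmented seven-point configuration $\{O, p_1, p_2, q_{11}, q_{12}, q_{21}, q_{22}\}$; consequently they preserve $M_\sigma^\sharp$ and map admissible second generations to admissible second generations. After reduction the pair lies in $\Omega_\sigma$, so $(S.4)_\sigma$ applies. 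Therefore, for every admissible choice of $(p_1, p_2)$ and every admissible choice of $(q_{ij})_{i,j\in\{1,2\}}$, we get $M_\sigma^\sharp(\{O, p_1, p_2, q_{11}, q_{12}, q_{21}, q_{22}\}) > 0$.

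Finally, I would close the argument by compactness. The domain over which $2\oneminM_\sigma$ is the infimum consists of seven-point configurations whose coordinates lie in explicit bounded closed subsets of $\R^2$, so it is a compact subset of $\R^{14}$; moreover the map sending such a configuration to $M_\sigma^\sharp$ of the corresponding set is continuous by \cref{lem:M-lip}. Hence its infimum is attained, and at the minimizer the previous paragraph gives $M_\sigma^\sharp > 0$. Combined with $M_\sigma \geq M_\sigma^\sharp$ and the reduction $2\oneminM_\sigma \ge \inf M_\sigma^\sharp$ observed right before the algorithm was described, this yields $2\oneminM_\sigma > 0$. The main obstacle here is conceptual rather than technical: the discretization lemmas only provide \emph{pointwise} strict positivity of $M_\sigma^\sharp$ over the reduced domain, and one must separately invoke compactness to turn this into a strict lower bound on the infimum.
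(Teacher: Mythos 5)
Your proof is correct and follows essentially the same route as the paper: the success of the algorithm gives the twice-discretized statements on $\Omega'_\sigma$, which propagate via \cref{lem:second_discretization}, \cref{lem:first_discretization}, and \cref{lem:algo_statement_implications} to $(S.4)_\sigma$ on all of $\Omega_\sigma$, and the symmetry reductions (rotation, reflection, relabeling) extend this to all admissible first-generation pairs. The only difference is that you make explicit the final step passing from pointwise positivity of $M_\sigma^\sharp$ to strict positivity of the infimum via compactness of the configuration space and the Lipschitz bound \cref{eq:Msharp-lip} — a detail the paper leaves implicit in the phrase ``we have already explained why'' — and your treatment of it is correct.
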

\begin{proof}
    The correct execution of the algorithm implies that, for all $(p_1, p_2)\in\Omega'_\sigma$, at least one out of $(S.1)_\sigma^{\delta}$, $(S.2)_\sigma^{\delta,0.05}$, $(S.2)_\sigma^{\delta,0.02}$, $(S.2)_\sigma^{\delta,0.01}$, $(S.3)_\sigma^{\delta,0.03}$ holds.
    Thanks to \cref{lem:second_discretization}, we deduce that at least one out of $(S.1)_\sigma^\delta$, $(S.2)_\sigma^\delta$, $(S.3)_\sigma^\delta$ holds for all $(p_1, p_2)\in\Omega'_\sigma$. 
    Then, applying \cref{lem:first_discretization}, due to the defining property of $\Omega'_\delta$, we deduce that at least one of $(S.1)_\sigma$, $(S.2)_\sigma$, $(S.3)_\sigma$ holds for all $(p_1, p_2)\in\Omega_\sigma$. 
    In particular, thanks to \cref{lem:algo_statement_implications}, we deduce that $(S.4)_\sigma$ holds for all $(p_1, p_2)\in\Omega_\sigma$. We have already explained why the validity of $(S.4)_\sigma$ at all points in $\Omega_\sigma$ implies \cref{eq:2-min-max-0.7-positive}.
\end{proof}

\subsection{Implementation}
A \texttt{C++} implementation of the algorithm outlined in the previous subsection is provided in \cref{app:source-code}.
The code prioritizes readability and clarity over speed optimization.

For each routine, the precise post-conditions are stated in the comment just before the routine. 
To properly grasp the post-conditions, it is essential to identify the floating-point numbers inputted and outputted with their corresponding rational numbers.

Let us draw a schematic comparison between the pseudo-algorithm and the actual source code:
\begin{itemize}
    \item The function \texttt{M\_sharp} computes $M_\sigma^\sharp$. It uses as subroutine \texttt{M\_plus\_sharp} which implements the function $M_\sigma^{+,\sharp}$ introduced in \cref{sec:linear-programming}. The function \texttt{M\_plus\_sharp} uses the open source library OR-Tools~\cite{ortools} to solve linear programming problems. The correctness of our algorithm does not depend on the correctness of such library as we do verify the correctness of the output produced by the library.
    \item The function \texttt{Omega\_discretized} returns the family of points that plays the role of $\Omega'_\sigma$.
    \item The function \texttt{X} returns the family of points that plays the role of $X^{\delta_1}_\sigma(p_1, p_2, r, m)$.
    \item The function \texttt{check\_S1} tries to prove $(S.1)_\sigma^\delta$.
    \item The function \texttt{check\_S2} tries to prove $(S.2)_\sigma^\delta$. More precisely, it tries to prove $(S.2)_\sigma^{\delta,\delta_1}$ for $\delta_1\in\{0.05, 0.02, 0.01\}$. It uses the subroutine \texttt{compute\_diameter} to compute the diameter of a set of points.
    \item The function \texttt{check\_S3} tries to prove $(S.3)_\sigma^\delta$.
    More precisely, it tries to prove $(S.3)_\sigma^{\delta,\delta_1}$ with $\delta_1:=0.03$. To this purpose, it constructs a graph whose vertices are the points $X_1\sqcup X_2 := X_\sigma^{\delta_1}(p_1,p_2, r^\delta, m^\delta) \sqcup X_\sigma^{\delta_1}(p_2,p_1, r^\delta, m^\delta)$. The edges are constructed according to the following rules:
    \begin{itemize}
        \item If $a, b\in X_i$, for $i=1,2$, then there is an edge between $a$ and $b$ if and only if $|a-b| \ge 2\sigma-\sqrt{2}\delta_1$ and $M_\sigma^\sharp(\{O, p_1, p_2, a, b\}) \le m^\delta+\sqrt{2}\delta_1$.
        \item If $a\in X_1$ and $b\in X_2$, then there is an edge between $a$ and $b$ if and only if $M_\sigma^\sharp(\{O, p_1, p_2, a, b\}) \le m^\delta+\sqrt{2}\delta_1$.
    \end{itemize}
    Observe that the property $(S.3)_\sigma^{\delta,\delta_1}$ is satisfied if and only if the given graph does not contain a $4$-clique (i.e., four vertices so that any two of them are connected by an edge) with two vertices in $X_1$ and two vertices in $X_2$.
    To check that such a special clique does not exist, the subroutine \texttt{contains\_bicolor\_k4} is called.
\end{itemize}

\subsection{Execution}
The code was compiled with:
\begin{center}
\texttt{g++ -std=c++20 -O2 -mfpmath=sse -msse2 -lortools besicovitch\_07.cpp -o besicovitch\_07}
\end{center}
The version of the compiler used was \texttt{gcc 10.3.1 20210422}.
The resulting executable file was run on a cluster of \texttt{quad 24 core 64-bit Intel Cascade Lake} processors.
The execution took approximately two hours.

Reproducing the execution on one core of a standard laptop would require approximately five days.

\subsection{Floating-point arithmetic issues}
In our implementation of the algorithm (available in \cref{app:source-code}), we extensively utilize floating-point arithmetic.
Floating-point arithmetic is notoriously challenging to use in computer-assisted proofs because of the inherent uncertainties (e.g., implementation-dependent behaviors) of the \emph{IEEE 754 standard}~\cite{IEEE}.
Recall that the fundamental \texttt{C++} type \texttt{double}, when compiled with \texttt{gcc}, adheres to the \texttt{binary64} format specified in the \emph{IEEE 754 standard}.\footnote{As explained at \url{https://gcc.gnu.org/wiki/FloatingPointMath}, the compliance to the standard --- at least for what concerns rounding --- is guaranteed for \texttt{double}s only with the compilation flags \texttt{-mfpmath=sse -msse2}.}
In particular the operations between \texttt{double}s comply with the following excerpt from the standard specification\cite[Section 5.1]{IEEE}:

\smallskip
\noindent\textit{[...] each of the computational
operations [...] that returns a numeric result shall be performed as if it first produced
an intermediate result correct to infinite precision [...] and then rounded that
intermediate result [...] to fit in the destination’s format [...]}
\smallskip

Let us collect some remarks about the use of floating-point numbers in our algorithm:
\begin{itemize}
\item Only basic operations on floating-point numbers are employed: addition, subtraction, multiplication, square root, and division. Moreover, for all instances of division, the denominator is always either $\sqrt{2}$ or $2\sigma=1.4$.

\item Throughout our computations, all floating-points numbers belong to the range $[-10, 10]$. Within this range, any real number can be represented by a valid floating-point number (of \texttt{double} type) with an absolute error smaller than $10^{-14}$ (see \cite[Table 3.2]{IEEE}).

\item All comparisons between floating-point values incorporate a tolerance threshold of $\texttt{EPS}=10^{-5}$ (i.e., if one shall prove $a<b$, then the code actually checks $a+\texttt{EPS}<b$).
\end{itemize}

Given the remarks above, it is a tedious but straightforward task to check that all the post-conditions stated in \cref{app:source-code} hold and do not depend on implementation-dependent properties of floating-point numbers.

\section{Quantitative Besicovitch conjecture}\label{s:quantitative}

In this final section of the paper, we propose the following conjecture, which we interpret as a quantitative version of Besicovitch's.

\begin{conjecture}\label{c:Bes-reg}
For any $\sigma > \frac{1}{2}$, there are positive constants $\eps$ and $\delta$ with the following property. 
Whenever $E\subset \mathbb R^2$ is a compact set such that
\begin{itemize}
\item[(a)] $\mathcal{H}^1 (E\cap U) \leq (1+\eps) \diam(U)$ for every $U$ with $\diam(U)\leq 1$, and
\item[(b)] $\mathcal{H}^1 (E\cap B_r (x))\geq 2 \sigma r$ for every $x\in E$ and every $r\leq 1$,
\end{itemize}
then any two connected components of $E$ are at least $\delta$ far apart. Therefore (provided $\eps$ is possibly smaller) $E$ is the union of finitely many disjoint embedded closed loops. 
\end{conjecture}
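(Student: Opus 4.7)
The plan is to argue by contradiction, closely following the template of the proof of \cref{lem:ubertechno}, which establishes the compact Besicovitch pair condition from $\InfminM_\sigma>0$. Fix $\sigma>\bar\sigma$ (say $\sigma=0.7$, for which $\InfminM_\sigma>0$ by \cref{thm:0.7} and \cref{cor:finite-minmax}), and suppose the first conclusion of \cref{c:Bes-reg} fails at some $\sigma>\tfrac12$: there exist $\eps_n\downarrow 0$ and compact $E_n\subset\R^2$ satisfying (a) with $\eps_n$ and (b), yet admitting two distinct components at mutual distance $d_n\to 0$. Selecting the \emph{closest} pair $E^1_n, E^2_n$, rescaling by $d_n^{-1}$, and translating so that $O\in E^1_n$ and some $p^\star_n\in E^2_n$ satisfies $|O-p^\star_n|=1$, hypotheses (a) and (b) now hold at all scales $\le d_n^{-1}\to\infty$, and by the closest-pair choice every other component of the rescaled $E_n$ lies at distance $\ge 1$ from $E^1_n$.

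Set $\mu_n:=(1+\eps_n)^{-1}\mathcal H^1\res E_n$, so $\mu_n(S)\le\diam(S)$ on bounded scales and $\mu_n(B_r(x))\ge 2\sigma_n r$ for $x\in E_n$, $r\le d_n^{-1}$, with $\sigma_n:=\sigma/(1+\eps_n)\to\sigma$. Applying \cref{l:furbata} to $(\mu_n, E^1_n, E^2_n)$, after relabeling we may assume $\mu_n(B_R(O)\cap E^1_n)\le R+\tfrac12$ for all $R\ge 0$. Crucially, for any $x\in E^1_n$ with $|x-p^\star_n|\ge 2$ and any $r\le 1$ we have $B_r(x)\cap E_n=B_r(x)\cap E^1_n$ (since $E^2_n\cap B_r(x)=\emptyset$ and every other component lies at distance $\ge 1$ from $x$), so $\mu_n(B_r(x)\cap E^1_n)\ge 2\sigma_n r$. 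Thus, for any fixed $L>0$, the truncated set $E^1_n\cap B_L(O)\setminus\overline{B_2(p^\star_n)}$ is $(\sigma''',L)$-stable for some $\sigma'''\in(\tfrac12,\sigma_n)$.

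Now fix $\sigma''\in(\tfrac12,\sigma)$ and $L'>1$ with $\InfminM_{\sigma''}(L')>0$. By \cref{lem:sigma-stable-is-finite} (or directly Step~3 of the proof of \cref{lem:ubertechno}), we extract a finite $(\sigma'',L')$-stable subset $P_n\subseteq E^1_n$ of uniformly bounded cardinality. Combining $\mu_n\le\diam$ with the $R+\tfrac12$ bound just established, for every $r\in\mathcal R(P_n)$,
\begin{equation*}
\sum_{p\in P_n} r(p)\le\frac{1}{2\sigma_n}\min\Bigl\{\diam U(P_n, r),\ R(P_n, r)+\tfrac12\Bigr\}.
\end{equation*}
Since $\sigma_n=\sigma/(1+\eps_n)$, subtracting $\tfrac{1}{2\sigma}\min\{\diam U(P_n, r), R(P_n, r)+\tfrac12\}$ from both sides gives $F_\sigma(P_n, r)\le \tfrac{\eps_n}{2\sigma}\cdot C(L')$ uniformly in $r$, whence $M_\sigma(P_n)\to 0$. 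This contradicts $M_\sigma(P_n)\ge\InfminM_{\sigma''}(L')>0$ for $n$ large and establishes the $\delta$-separation of components.

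The refinement to finitely many disjoint embedded closed loops follows from the $\delta$-separation. Compactness of $E$ combined with (b) caps the total number of components. Each component $E'$ is connected and compact with finite $\mathcal H^1$-measure and densities pinched between $\sigma$ and $(1+\eps)/2$; rectifiability (from the already-established bound $\bar\sigma\le\sigma$) together with a standard curve-parameterization result realizes $E'$ as a Lipschitz image of either a compact interval $[0,T]$ or of $S^1$. The former is excluded since an endpoint would have one-sided density at most $\tfrac12<\sigma$, violating (b); a self-intersection in the latter would force density $\ge 1>(1+\eps)/2$, violating (a) for $\eps$ small. The principal obstacle throughout is the localization in the second paragraph: one must upgrade the density hypothesis, which a priori involves the whole $E_n$, to a density bound on $E^1_n$ alone at a fixed range of scales, without circularly invoking any separation of components. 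The device of selecting the \emph{closest} pair $E^1_n, E^2_n$ --- forcing all remaining components to sit at distance $\ge 1$ from $E^1_n$ in the rescaled picture --- is precisely what bypasses this circularity.
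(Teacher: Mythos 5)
What you have written is, in substance, the paper's proof of \cref{p:Bes-reg-2}, not a proof of \cref{c:Bes-reg}. The statement is an open \emph{conjecture}: it asks for every $\sigma>\tfrac12$. Your argument requires $\InfminM_{\sigma''}(L')>0$ for some $\sigma''<\sigma$, which is known only for $\sigma''\ge 0.7$ (via \cref{thm:0.7}), so it can reach at best $\sigma>0.7$. Your opening sentence conflates the fixed $\sigma=0.7$ with the arbitrary $\sigma>\tfrac12$ at which the conclusion is supposed to fail; for $\sigma\in(\tfrac12,0.7]$ the argument produces nothing, and the paper does not claim otherwise (indeed \cref{p:Bes-reg-1} shows the statement fails at the endpoint $\sigma=\tfrac12$, $\eps=0$). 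Within the conditional regime your scheme does track the paper's: contradiction, separation, rescaling, \cref{l:furbata}, stability of one piece, extraction of a finite stable subset via \cref{lem:sigma-stable-is-finite}, and the bound $F_\sigma(P,r)\le C\eps$ contradicting $\InfminM_{\sigma''}(L')>0$.

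The step you single out as ``the principal obstacle'' is handled incorrectly, and the fix is the one point where the paper's proof genuinely differs from yours. A \emph{closest} pair of connected components need not exist: components of a compact set can accumulate (take $\{0\}\times[0,1]\cup\bigcup_n\{1/n\}\times[0,1]$), so the infimum of pairwise distances may fail to be attained, and then your claim that after rescaling ``every other component lies at distance $\ge 1$ from $E^1_n$'' has no justification. The paper never isolates individual components: it picks a relatively clopen bipartition $E=E_1\sqcup E_2$ with $x_1\in E_1$, $x_2\in E_2$ (possible because distinct components of a compact set are separated by clopen sets), so $0<\dist(E_1,E_2)\le\delta$, and rescales so that $\dist(E_1,E_2)=1$; then $B_r(x)\cap E=B_r(x)\cap E_1$ for all $x\in E_1$ and $r\le 1$ automatically, since there is nothing outside $E_1\cup E_2$. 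Relatedly, your truncation $E^1_n\cap B_L(O)\setminus\overline{B_2(p^\star_n)}$ is both unnecessary and harmful: since $|O-p^\star_n|=1$, it deletes the origin, while $\InfminM_{\sigma''}(L')$ is an infimum over stable sets \emph{containing} $O$. Finally, the passage from $\delta$-separation to finitely many disjoint embedded loops is left as an exercise in the paper; your sketch is directionally reasonable but the endpoint and self-intersection density estimates are not carried out.
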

Let us remark that the conjecture is scaling-invariant, i.e., it would be equivalent to assume $\diam(U)\le r_0$ in (a), $r\le r_0$ in (b), and require two connected components to be $\delta r_0$ far apart in the conclusion.

We first explain the connection between \cref{c:Bes-reg} and \cref{c:Bes}. First of all fix a Borel set $E\subset \mathbb R^2$ with $\mathcal{H}^1 (E) < \infty$ and $\Theta_*^1 (E, \cdot) > \sigma$ $\mathcal{H}^1$-a.e. on $E$. Then for $\mathcal{H}^1$-a.e. $x\in E$ and for every fixed $\varepsilon>0$ there is a radius $r_0 (\varepsilon, x)$ such that
\begin{itemize}
\item[(a')] statement (a) above holds for every $U$ with ${\rm diam}\, (U) \leq 2 r_0 (\varepsilon, x)$ and $x\in U$;
\item[(b')] $\mathcal{H}^1 (B_r (x))\geq 2\sigma r$ for every $r \leq r_0 (\varepsilon, x)$.
\end{itemize}
The conditions (a) and (b) are therefore an obvious quantitative strengthening of the latter. Moreover, inspecting both the original arguments of \cite{Besicovitch} and those of \cite{PT} which are borrowed in \cref{sec:proof-infinite-minmax}, it is clear that (a') and (b') are the two key properties which ultimately underlie Besicovitch's, Preiss-Ti\v{s}er, and our upper bounds for $\bar\sigma$.

On the other hand the conclusion that $E$ is rectifiable can be equivalently rephrased as:
\begin{itemize}
\item[(R)] For $\mathcal{H}^1$-a.e. $x\in E$ and every $\varepsilon > 0$, there is a radius $r_0 (x, \varepsilon)>0$ and a closed connected set $F\subset B_{r_0} (x)$ with the property that $\mathcal{H}^1 ((E\Delta F)\cap B_{r_0} (x))<\varepsilon r_0$, where $E\Delta F$ is the symmetric difference of the two sets. 
\end{itemize}
Thus, clearly, the conclusion of \cref{c:Bes-reg} is a quantitative strengthening of (R). 

In this Section we just point out the following two facts, which are counterparts of corresponding statements for Besicovitch's \cref{c:Bes} but are, on the other hand, easier to prove.

\begin{proposition}\label{p:Bes-reg-2}
Assume that $\InfminM_{\sigma'} > 0$ for some $\sigma' \geq \frac{1}{2}$. Then \cref{c:Bes-reg} holds for $\sigma > \sigma'$.
\end{proposition}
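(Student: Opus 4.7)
The plan is to adapt the proof of \cref{thm:infinite-minmax}, namely of \cref{lem:ubertechno}, to the quantitative setting: hypotheses (a) and (b) will take the role of the ``negation of the compact Besicovitch pair condition'' in the original argument. Given $\sigma>\sigma'$ with $\InfminM_{\sigma'}>0$, I would first fix $\sigma''\in(\sigma',\sigma)$ and $L^*>1$ with $\InfminM_{\sigma'}(L^*)>0$, then a larger $L>L^*+1$, and set $N := N_{\ref*{lem:sigma-stable-is-finite}}(\sigma'',\sigma',L,L^*)$. The aim is to prove the conjecture with any $\delta<1/(L+1)$ and any $\varepsilon>0$ small enough that $\varepsilon N/(1+\varepsilon)<\InfminM_{\sigma'}(L^*)$ and $2\sigma/(1+\varepsilon)\ge 2\sigma''$. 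Arguing by contradiction, suppose $E$ satisfies (a), (b), but two of its components $E_1, E_2$ lie within distance $d<\delta$. After selecting a suitable nearby pair (so that $d':=\dist(E_1, E\setminus E_1)$ is attained at $p\in E_1$, $q\in E\setminus E_1$ with $d'\le d$), I would translate, rotate and rescale $E$ by $1/d'$, landing in a setting where $O\in\tilde E_1$, $(-1,0)\in \tilde E\setminus \tilde E_1$, $\dist(\tilde E_1, \tilde E\setminus\tilde E_1)=1$, and $\mu:=\mathcal H^1\res\tilde E$ satisfies $\mu(S)\le(1+\varepsilon)\diam(S)$ and $\mu(B_r(x))\ge 2\sigma r$ at all scales up to $1/d'\ge L+1$. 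A trivial upgrading of \cref{l:furbata} to the factor $(1+\varepsilon)$ provides, up to swapping $\tilde E_1$ with the component of $\tilde E$ containing $(-1,0)$, the upper estimate $\mu(B_R(O)\cap\tilde E_1)\le(1+\varepsilon)(R+\tfrac12)$.

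The decisive simplification is that the balls $B_\tau(q)$ are \emph{open}: since $\dist(\tilde E_1, \tilde E\setminus\tilde E_1)=1$, for every $p\in \tilde E_1$ and every $r\le 1$ we have $B_r(p)\cap(\tilde E\setminus\tilde E_1)=\emptyset$. Hence $\mu(B_r(p))=\mu(B_r(p)\cap\tilde E_1)\ge 2\sigma r$, so that $\diam(\tilde E_1\cap B_r(p))\ge 2\sigma r/(1+\varepsilon)\ge 2\sigma'' r$, proving that $\tilde E_1\cap \overline{B_{L+1}(O)}$ is $(\sigma'',L)$-stable. The same observation applied to a disjoint union of such balls $U(P_0,r)$ associated to a finite $P_0\subseteq \tilde E_1\cap\overline{B_{L+1}(O)}$ and $r\in\mathcal R(P_0)$ yields $\mu(U(P_0,r))\ge 2\sigma\sum_p r(p)$ and also $\mu(U(P_0,r))\le(1+\varepsilon)\min\{\diam(U(P_0,r)), R(P_0,r)+\tfrac12\}$, whence
\begin{equation*}
 F_\sigma(P_0,r)\le \tfrac{\varepsilon}{1+\varepsilon}\sum_{p\in P_0}r(p)\le \tfrac{\varepsilon\,|P_0|}{1+\varepsilon},
\end{equation*}
and therefore $M_{\sigma'}(P_0)\le M_\sigma(P_0)\le \varepsilon|P_0|/(1+\varepsilon)$. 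This is precisely Step 4 of \cref{lem:ubertechno} transplanted to our situation. Invoking \cref{lem:sigma-stable-is-finite} to extract a $(\sigma', L^*)$-stable subset $O\in P'\subseteq \tilde E_1\cap \overline{B_{L+1}(O)}$ of cardinality at most $N$ then gives $M_{\sigma'}(P')\le \varepsilon N/(1+\varepsilon)<\InfminM_{\sigma'}(L^*)\le M_{\sigma'}(P')$, the desired contradiction.

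The main obstacle I anticipate is the component-selection step in the first paragraph: ruling out (or otherwise handling) the pathological situation in which $\dist(E_1, E\setminus E_1)$ is never attained, that is, in which other components accumulate on $E_1$. I expect this to be overcome either by the direct observation that, whenever the conjecture fails, there must exist a pair of components at small but \emph{attained} mutual distance (so one selects such a pair from the start), or via a Blaschke--Hausdorff compactness argument on the rescaled configurations, extracting a limit set on which the argument above applies verbatim. Once separation is established, the stronger ``finite union of disjoint closed loops'' conclusion---formulated in \cref{c:Bes-reg} as holding for a possibly smaller $\varepsilon$---should follow from standard structural results for $1$-rectifiable sets whose upper and lower densities are pinched near the value $1$.
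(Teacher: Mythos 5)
Your argument follows the paper's proof essentially step by step: rescale a violating pair to unit distance, apply \cref{l:furbata} (with the harmless factor $1+\eps$), observe that openness of the balls makes the density lower bound pass to $E_1$ exactly, deduce $(\sigma'',L)$-stability, and bound $F_\sigma(P_0,r)$ via $\mathcal H^1(U(P_0,r)\cap E_1)$ from both sides. One cosmetic difference: you keep the error $\eps|P_0|/(1+\eps)$ and then invoke \cref{lem:sigma-stable-is-finite} to control $|P_0|$; the paper instead notes that the density lower bound here is \emph{exact} (unlike in \cref{lem:ubertechno}), so $F_{\sigma'}(P_0,r)\le F_{\sigma/(1+\eps)}(P_0,r)\le 0$ outright once $\sigma'(1+\eps)\le\sigma$, and no cardinality bound is needed. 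Your route works but is slightly more roundabout. (Minor quantitative slip: with balls of radius up to $1$ centered in $\overline{B_{L+1}(O)}$, the sets $U(P_0,r)$ and $B_{R(P_0,r)}(O)$ have diameter up to $2L+4$, so you need $\delta^{-1}>2L+4$ rather than $\delta^{-1}>L+1$.)

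The one genuine issue is the separation step you flag at the end, and neither of your proposed fixes is quite right as stated. Selecting two components $C_1,C_2$ at small \emph{attained} mutual distance does not suffice: the argument needs a splitting $E=E_1\sqcup E_2$ into two compact pieces with $0<\dist(E_1,E_2)<\delta$, so that $B_r(x)\cap E\subseteq E_1$ for $x\in E_1$ and small $r$, and so that $E_1$ and $E_2$ play symmetric roles in \cref{l:furbata}; if other components accumulate on $C_1$, taking $E_1=C_1$ destroys both properties. The paper's resolution is to take $E_1$ to be a relatively \emph{clopen} subset of $E$ containing $x_1$ but not $x_2$ — such a set exists because in a compact metric space connected components coincide with quasi-components — and $E_2:=E\setminus E_1$. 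Both are then compact, so $\dist(E_1,E_2)$ is positive, attained, and at most $|x_1-x_2|<\delta$, and the rest of your argument applies verbatim with this $E_1$ in place of a single connected component.
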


\begin{proposition}\label{p:Bes-reg-1}
There is a compact set $E\subset \mathbb R \subset \mathbb R^2$ which has infinitely many connected components but satisfies the assumptions of \cref{c:Bes-reg} with $\sigma=\frac{1}{2}$ and $\varepsilon=0$. In particular for every positive number $\eta$ there is a pair of connected components of $E$ containing points at distance smaller than $\eta$.
\end{proposition}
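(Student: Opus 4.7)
The plan is to exhibit an explicit compact subset $E$ of the real line and verify both conditions of \cref{c:Bes-reg} by direct computation. Define
\[
E := [0,3] \setminus \bigsqcup_{n\ge 1} G_n, \qquad G_n := \bigl(1+2^{-n},\,1+2^{-n}+4^{-n}\bigr),
\]
viewed as a subset of $\R\times\{0\}\subset\R^2$. Because $1+2^{-n}+4^{-n}<1+2^{-n+1}$, the $G_n$ are pairwise disjoint, and they accumulate at $1$ from the right, so $E$ is closed and bounded, hence compact. Its connected components are $[0,1]$, the rightmost block $[7/4,3]$, and, for each $n\ge 2$, the interval $K_n:=[1+2^{-n}+4^{-n},\,1+2^{-n+1}]$ trapped between the consecutive gaps $G_n$ and $G_{n-1}$. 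In particular $E$ has infinitely many components, and $\dist(K_n,[0,1])=2^{-n}+4^{-n}\to 0$ produces, for every $\eta>0$, two components at distance less than $\eta$.

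Condition (a) with $\eps=0$ is immediate: for every $U\subset\R^2$ with $\diam(U)\le 1$, the intersection $E\cap U$ lies in $\R$ and is contained in an interval of length at most $\diam(U)$. For condition (b), since $B_r(x)\cap\R=(x-r,x+r)$ has length $2r$, the bound $\mathcal H^1(E\cap B_r(x))\ge r$ is equivalent to controlling the ``defect''
\[
D(x,r):=\mathcal H^1\bigl((x-r,x+r)\cap\bigl((-\infty,0)\cup(3,+\infty)\cup\textstyle\bigsqcup_n G_n\bigr)\bigr)
\]
by $r$ for every $x\in E$ and every $r\in(0,1]$.

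I would establish $D(x,r)\le r$ by a case analysis on the position of $x$. When $x\in K_n$, the ball $B_r(x)$ sits inside $(0,3)$ and meets only a few adjacent gaps; the tight scenario is $x$ at an endpoint of $K_n$ where $B_r(x)$ straddles a single adjacent gap $G_m$, and one checks directly that $|G_m\cap B_r(x)|\le r$. When $x\in[0,1]$, set $s:=(x+r-1)^+$; since $r\le 1$ one has $s\le\min(x,r)$. The defect splits as the outside part $(r-x)^+$ plus the gap part in $(1,1+s)$; only gaps $G_m$ with $2^{-m}\le s$ intersect $(1,1+s)$, and their total length is at most $\sum_{m\ge\lceil\log_2(1/s)\rceil}4^{-m}\le\tfrac{4}{3}s^2$. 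A short case distinction then yields $D(x,r)\le r$: the bound $\tfrac{4}{3}s^2\le\tfrac{4}{3}sx\le x$ holds whenever $x\le 3/4$ (using $s\le x$), and the uniform fallback $\sum_n|G_n|=\tfrac{1}{3}$ handles $x\ge 1/3$, and these two regimes cover $[0,1]$. The case $x\in[7/4,3]$ is handled by an analogous computation. The main obstacle is precisely the case $x\in[0,1]$ with $r$ close to $1$, because then $B_r(x)$ contains both a portion outside $[0,3]$ (of mass $(r-x)^+$) and all the accumulating gaps near $1$; the quadratic decay $|G_n|=4^{-n}=(2^{-n})^2$ of the gap sizes compared to the spacing $2^{-n}$ to the accumulation point is exactly what is needed to make the total gap mass in a ball near $1$ controllable by the remaining budget $\min(r,x)$.
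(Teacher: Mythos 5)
Your construction does not satisfy hypothesis (b), so the proof fails at the level of the example itself rather than at a technical step. Take $x=\tfrac32$, the right endpoint of the component $K_2=[\tfrac{21}{16},\tfrac32]$, and $r=\tfrac14$. Then $B_r(x)\cap\R=(\tfrac54,\tfrac74)$ contains all of $G_1=(\tfrac32,\tfrac74)$ (length $\tfrac14$) and all of $G_2=(\tfrac54,\tfrac{21}{16})$ (length $\tfrac1{16}$), so
\[
\mathcal H^1\bigl(E\cap B_{1/4}(\tfrac32)\bigr)=\tfrac12-\tfrac14-\tfrac1{16}=\tfrac3{16}<\tfrac14=2\sigma r .
\]
The defect of your sketch is precisely the sentence claiming that for $x\in K_n$ ``the tight scenario is $x$ at an endpoint of $K_n$ where $B_r(x)$ straddles a single adjacent gap'': the worst case straddles \emph{both} adjacent gaps, and this is fatal exactly when the component is shorter than the sum of its two neighbouring gaps. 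In your set $|K_n|=2^{-n}-4^{-n}$ while $|G_n|+|G_{n-1}|=5\cdot 4^{-n}$, and for $n=2$ one has $\tfrac3{16}<\tfrac5{16}$. This is not incidental: the paper's example is built so that every component is at least as long as the sum of its two adjacent gaps (the inequality \cref{e:left-right}, $\tfrac34\,2^{-k}\ge\tfrac38\,2^{-k}$), and that inequality is essentially necessary, since centering a ball at the endpoint of a component $A$ with radius equal to the length of the adjacent gap $C$ forces $\mathcal H^1(E\cap B_r(x))\le |A|+(\text{whatever of }E\text{ lies beyond the other gap }B)$.

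The remainder of your argument is sound: compactness, the enumeration of components, the accumulation of components at $1$, condition (a), and the analysis for $x\in[0,1]$ (where the quadratic decay $|G_n|=(2^{-n})^2$ does exactly the job you describe, modulo the minor slip that the two regimes should be phrased in terms of $s$ and $r$ rather than $x$ alone, since the fallback bound $\tfrac13$ only beats $r$ when $r\ge\tfrac13$). The construction can be repaired, e.g.\ by shrinking the gaps so that $|K_n|\ge|G_n|+|G_{n-1}|$ for \emph{every} $n$ (say $|G_n|=c\,4^{-n}$ with $c$ small, or $|G_n|=8^{-n}$), after which your strategy for the case $x\in K_n$ must still be carried out for balls covering many gaps on both sides, not just one. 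As written, however, the statement you set out to prove is false for your $E$.
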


\subsection{Proof of \texorpdfstring{\cref{p:Bes-reg-2}}{Proposition \ref{p:Bes-reg-2}}}
It suffices to show that, if $\InfminM_{\sigma'} (L) > 0$ for some $\sigma'> \frac{1}{2}$ and some $L>0$, then \cref{c:Bes-reg} holds for $\sigma>\sigma'$. We thus assume that we have a counterexample to the conjecture, which we denote by $E$, namely a set $E$ for which there are two connected components at distance $\delta$ but satisfies both (a) and (b) with $\eps>0$ (if the lower bound on the distance between any pair of connected components holds, then the last statement of the conjecture is a simple consequence, which we leave as an exercise to the reader). Both $\eps$ and $\delta$ can be chosen as small as we wish and their choice will be specified along the way at a suitable time in order to show a contradiction.

If the assertion is false there are two points $x_1$ and $x_2$ with $|x_1-x_2|\leq \delta$ belonging to distinct connected components. Let $E_1$ be a (relatively) closed and open set containing $x_1$ but not $x_2$ and let $E_2:=E\setminus E_1$.
Thus, $E_1$ and $E_2$ are two compact sets at a positive distance smaller than $\delta$ such that $E_1\sqcup E_2 = E$.
Let $x_1\in E_1$ and $x_2\in E_2$ be two points such that $|x_1-x_2|= \dist(E_1, E_2)$. 
We now apply a homothetic rescaling to $E$ so that $|x_1-x_2|=1$. In particular we conclude that 
\begin{equation}\label{e:aggiunta_Davide_1}
\mathcal{H}^1 (U\cap E) \leq (1+\eps) \diam(U)
\end{equation}
whenever $\diam(U) \leq 2L+3$, provided that $\delta^{-1}$ is larger than $2L+4$. Furthermore 
\[
\mathcal{H}^1 (B_r (x) \cap E) \geq 2\sigma r \qquad \forall r\leq 1, \forall x\in E\, .
\]

We can now apply \cref{l:furbata} to $\mu = (1+\varepsilon)^{-1} \mathcal{H}^1 \res (E\cap B_{L+2} (\frac{x_1+x_2}{2}))$ and conclude that, upon relabeling the sets and translating so that $x_1=O$, we have
\begin{equation}\label{e:aggiunta_Davide_2}
\mathcal{H}^1 (E_1 \cap B_R (O)) \leq (1+\varepsilon) (R+\tfrac{1}{2}) \qquad \forall R\leq L+1\, .
\end{equation}
We observe that $E_1$ is then $(\sigma', L)$-stable, provided $\eps$ is small enough. In fact, if $x\in E_1$ and $r\leq 1$, $\mathcal{H}^1 (B_r (x)\cap E_1)= \mathcal{H}^1 (B_r (x) \cap E)\geq 2\sigma r$ (because $B_1 (x) \cap E_2 = (B_r (x)\cap E)\setminus E_1 = \emptyset$) and $\mathcal{H}^1 (E_1\cap B_r (x))\leq (1+\varepsilon) \diam(E_1\cap B_r (x))$. In particular
\[
\diam(E_1 \cap B_r (x)) \geq \frac{2\sigma}{1+\varepsilon} r \geq 2\sigma' r \, ,
\]
provided $\sigma' (1+\varepsilon) \leq \sigma$. This ensures the existence of two points $q_1, q_2\in \overline{B_r(x)}$ which belong to $E_1$ so that $|q_1-q_2|\ge 2\sigma'r$, and hence the $(\sigma',L)$-stability condition. 

Consider now a set of points $P\subseteq E_1\cap B_L (O)$ and a family of radii $r\in\mathcal R(P)$. Recall the definitions of $U(P, r)$ and $R(P, r)$ given in \cref{subsec:objective}. 
Observe that 
\[
\mathcal{H}^1 (U(P,r)\cap E_1) = \sum_{p\in P} \mathcal H^1(B_{r(p)}(p)\cap E_1) \geq 2\sigma \sum_{p\in P} r(p)
\]
and, by \eqref{e:aggiunta_Davide_1} and \eqref{e:aggiunta_Davide_2},
\[
\mathcal{H}^1 (U(P,r)\cap E_1) \leq (1+\varepsilon) \min \{ \diam(U (P, r)), R (P, r) + \tfrac{1}{2}\}\, .
\]
These two inequalities imply 
\[
F_{\sigma'}(P, r) \le F_{\frac{\sigma}{1+\eps}}(P,r) = 
\sum_{p\in P} r(p) - \frac{1+\eps}{2\sigma} \min \{ {\rm diam}\, (U (P, r)), R (P, r) + \tfrac{1}{2}\} \le 0.
\]
Since $P\subseteq E_1\cap B_L(O)$ and $r\in \mathcal R(P)$ are arbitrary, we deduce that $M_{\sigma'}(E_1) = 0$ and so $\InfminM_{\sigma'}(L)=0$ which is the desired contradiction.

\subsection{Proof of \texorpdfstring{\cref{p:Bes-reg-1}}{Proposition \ref{p:Bes-reg-1}}}
For every integer $k\geq -1$ we let $I_k$ be the closed intervals 
\begin{align}
I_k &= [2^{-k}, \tfrac{7}{4}\cdot 2^{-k}] \qquad \mbox{for $k\geq 0$}\\
I_{-1} &= [-1, 0]\, .
\end{align}
The set $E$ is then given by $E := \bigcup_{k\geq -1} I_k$,
see \cref{f:esempio}.

\begin{figure}
\begin{tikzpicture}
\foreach \i in {0,...,10}
{
\draw[very thick] ({3*2^(-\i)},0) -- ({3*7*2^(-2-\i)},0);
}
\node[below] at (0,0) {$0$};
\foreach \i in {0,...,3}
{\node[above] at ({3*11/8*2^(-\i)},0) {$I_{\i}$};
}
\draw[very thick] (-1,0) -- (0,0);
\node[above] at (-0.5,0) {$I_{-1}$};
\end{tikzpicture}
\caption{The intervals $I_k$ on the real axis: the set $E$ is the union of them.}\label{f:esempio}
\end{figure} 

Obviously $E$ is closed, it has infinitely many connected components and $\mathcal{H}^1 (E) < \infty$. 
We next wish to show that there is $r_0>0$ such that 
\begin{equation}\label{e:lb-intervals}
\mathcal{H}^1 (E\cap [a-r, a+r]) \geq \mathcal{H}^1 ([a-r,a+r]\setminus E)\qquad \forall r<r_0
\end{equation}
and for all $a\in E$, which can be easily seen to complete the proof of the proposition. It is also easy to see that, if we focus on points $a$ belonging to $I_0\cup I_{-1}$, then there is such an $r_0$. We hence assume that $a\in I_k$ for some $k\geq 1$. The inequality is then a direct consequence of the following observation. Fix any interval $A$ among the intervals $I_k$ with $k\geq 1$ (in other words $A$ is neither the leftmost, nor the rightmost, interval of the collection). Denote by $B$ and $C$ the open intervals which form the two connected components of $\mathbb R\setminus E$ adjacent to $A$. Then 
\begin{equation}\label{e:left-right}
 \mathcal{H}^1 (A) \geq \mathcal{H}^1 (B) + \mathcal{H}^1 (C)\, 
\end{equation}
(the number on the left is $(1-4^{-1}) 2^{-k}$ while the one on the right is $4^{-1} (2^{-k-1}+2^{-k})$). 
Armed with the latter observation we show that \eqref{e:lb-intervals} holds for any $a\in I_k$ with $k\geq 1$. 

Fix $a$ and $r\leq r_0$ (whose choice will be specified at the end) and introduce $b:= \max \{a-r, 0\}$. If $r_0\leq 1$ we are guaranteed that, when $a-r<0$, then $[a-r, 0]$ is contained in $E$. So, we just need to show 
\begin{equation}\label{e:lb-intervals-2}
\mathcal{H}^1 (E\cap [b, a+r]) \geq \mathcal{H}^1 ([b,a+r]\setminus E)\qquad \forall r<r_0\,.
\end{equation}
Next consider the case when $E\cap [b, a+r]$ does not include any $I_j$. Then necessarily $b=a-r$ and, if we let $k$ be such that $I_k\ni a$, either $a+r\in I_k$, or $a-r\in I_k$, namely either $[a-r, a]\subset I_k$ or $[a, a+r]\subset I_k$ (or both!), which makes \eqref{e:lb-intervals-2} a triviality.

We assume therefore that among the intervals forming $E\cap [b, a+r]$ there is at least one entire $I_j$. This assumption guarantees that, for each connected component $B$ of $\R\setminus E$ that intersects $[b, a+r]$, at least one of the connected components of $E$ adjacent to $B$ is entirely contained in $[b, a+r]$. By choosing $r_0>0$ small enough so that $I_0$ is never entirely contained in $[b, a+r_0]\supset [b, a+r]$, the previous observation together with \cref{e:left-right} implies \cref{e:lb-intervals-2}.


\appendix
\clearpage
\section{Source code of the computer-assisted proof of \texorpdfstring{$\bar\sigma\le 0.7$}{sigma<=0.7}}\label{app:source-code}

\definecolor{LightGray}{gray}{0.97}
\begin{minted}[frame=lines,
framesep=2mm,
baselinestretch=1.2,
bgcolor=LightGray,
fontsize=\footnotesize,
linenos]{C++}
#include <bits/stdc++.h>
#include "ortools/linear_solver/linear_solver.h"
using namespace std;
using namespace operations_research;

const double sigma = 0.7;

// We use an EPS of tolerance everywhere to avoid precision issues due 
// to floating-point arithmetic. 
// All operations are numerically stable.
const static double EPS = 1e-5;

// Returns the time elapsed in nanoseconds from 1 January 1970, at 00:00:00.
long long get_time() {
    return chrono::duration_cast<chrono::nanoseconds>(
        chrono::steady_clock::now().time_since_epoch())
        .count();
}


// Class that represents a point in the plane.
struct point {
    double x,y;

    point() : x(0), y(0) {}
    point(double x, double y) : x(x), y(y) {}

    point operator -(const point& other) const {
        return point(x-other.x, y-other.y);
    }
    point operator +(const point& other) const {
        return point(x + other.x, y + other.y);
    }
};

const point O = {0, 0};

double norm(point A) { return sqrt(A.x*A.x + A.y*A.y); }

ostream& operator <<(ostream& out, point P) {
    out << "(" << P.x << ", " << P.y << ")";
    return out;
}


// Returns M^{+,\sharp}_\sigma(P, P[i0])
//
// Let res be the value returned by this function.
// We guarantee only that 
//   res <= M^{\sharp}_\sigma(Q). 
// Notice that the index i0 is dropped.
//
// Even though we use the library ortools, the validity of the 
// post-condition stated above does not depend on the correctness
// of the library. We verify at the end of this function that the 
// condition holds by producing a family r\in \mathcal R(P) such that
// F^{\sharp}_\sigma(P, r) = res. 
double M_plus_sharp(const vector<point>& P, int i0) {
    int k = P.size();
    
    unique_ptr<MPSolver> solver(MPSolver::CreateSolver("GLOP"));

    
    // Variables:
    // R[0,...,k-1]
    const double infinity = solver->infinity();
    vector<MPVariable*> r;
    for (int i = 0; i < k; i++) 
        r.push_back(solver->MakeNumVar(0.0, 1.0, "r[" + to_string(i) + "]"));
    
    // norm(P[i]) + r[i] <= norm(P[i0]) + r[i0]
    for (int i = 0; i < k; i++) {
        if (i == i0) continue;
        MPConstraint* const c = solver->MakeRowConstraint(
            -infinity, norm(P[i0])-norm(P[i]));
        c->SetCoefficient(r[i], 1);
        c->SetCoefficient(r[i0], -1);
        
    }
    // r[i] + r[j] <= norm(P[i] - P[j])
    for (int i = 0; i < k; i++) for (int j = i+1; j < k; j++) {
        MPConstraint* const c = solver->MakeRowConstraint(
            -infinity, norm(P[i]-P[j]));
        c->SetCoefficient(r[i], 1.0);
        c->SetCoefficient(r[j], 1.0);
    }
    // Goal:
    //  max (r[0] + r[1] + ... + r[k-1]) - r[i0] / (2*sigma)
    MPObjective* const objective = solver->MutableObjective();
    for (int i = 0; i < k; i++) objective->SetCoefficient(r[i], 1);
    // The next line overwrites the value set in the previous line.
    objective->SetCoefficient(r[i0], 1 - 1.0 / (2*sigma));
    objective->SetMaximization();
    
    const MPSolver::ResultStatus result_status = solver->Solve();
    // If there is not a choice of r[0], ..., r[k-1] satisfying all
    // constraints, then we return -1.
    // Observe that r[0] = r[1] = ... = r[k-1] = 0 is not a solution
    // of the constraints norm(P[i]) + r[i] <= norm(P[i0]) + r[i0]
    // unless norm(Q[i0]) is the largest of norm(Q[i]).
    if (result_status != MPSolver::OPTIMAL) return -1;

    // Verification of the solution.
    vector<double> r_sol(k);
    for (int i = 0; i < k; i++)
        r_sol[i] = max(0.0, r[i]->solution_value() - 2*EPS);
    
    for (int i = 0; i < k; i++) 
        assert(0 <= r_sol[i] and r_sol[i] <= 1);
    for (int i = 0; i < k; i++) for (int j = i+1; j < k; j++) 
        assert(r_sol[i] + r_sol[j] <= max(0.0, norm(P[i] - P[j]) - EPS));
    
    // Computation of the solution value.
    double big_radius = 0.0;
    for (int i = 0; i < k; i++) 
        big_radius = max(big_radius, norm(P[i]) + r_sol[i]);
    
    double res = 0;
    for (int i = 0; i < k; i++)
        res += r_sol[i];
    res -= (big_radius + 0.5) / (2*sigma);
    
    return res - EPS;
}

// Returns M^\sharp_\sigma(P).
//
// Let res be the value returned by this function.
// We guarantee that 
//   res <= M^{\sharp}_\sigma(P).
//
// The parameter use_last_points forces the subset P' (that will be
// passed to M_plus_sharp) to contain the last use_last_points points 
// in P. 
// This parameter is used with values 1 or 2, to speed up the execution. 
// Observe that setting use_last_points to a positive value can only 
// decrease the value returned by this function.
// When we call this function with use_last_points > 0, we are able to
// prove that the result is not affected at all by such choice of
// use_last_points.
double M_sharp(vector<point> P, int use_last_points=0) {
    int k = P.size();
    
    double ans = -1;
    
    // The bitmask bb iterates over the subsets of {0, 1, ..., k-1}.
    for (int bb = 0; bb < (1<<k); bb++) {
        vector<point> P_prime;
        // For subsets of <= 1, M_plus_sharp returns a negative number.
        // Therefore we skip those subsets.
        if (__builtin_popcount(bb) <= 1) continue;
        
        // We want to consider only the subsets bb containing the
        // last use_last_points points.
        bool using_last_points = true;
        for (int i = k-1; i >= k-use_last_points; i--)
            using_last_points &= ((bb&(1<<i)) > 0);
        if (!using_last_points) continue;
        
        for (int i = 0; i < k; i++) 
            if (bb & (1<<i)) P_prime.push_back(P[i]);
        
        for (int i = 0; i < ssize(P_prime); i++) 
            ans = max(ans, M_plus_sharp(P_prime, i));
    }

    return ans;
}


// Returns the maximum distance between two of the given points.
//
// Let res be the value returned by this function.
// We guarantee that 
//   res >= diam(P).
double compute_diameter(const vector<point>& P) {
    int n = ssize(P);
    double diam = 0;
    for (int i = 0; i < n; i++) 
        for (int j = i+1; j < n; j++) 
            diam = max(diam, norm(P[i]-P[j]));
    return diam + EPS;
}


// Returns true if there are four vertices forming a K_4 with two 
// vertices of color 0 and two vertices of color 1.
bool contains_bicolor_k4(int n, vector<bool> color, vector<set<int>> edges) {
    for (int i = 0; i < n; i++) {
        for (int j = i + 1; j < n; j++) {
            if (color[i] or color[j]) continue;
            if (!edges[i].count(j)) continue;
            // i and j have color 0 and they know eachother.
            vector<int> candidates;
            for (int x: edges[i]) {
                if (color[x] == 1 and edges[j].count(x))
                    candidates.push_back(x);
            }
            for (int x: candidates) for (int y: candidates) {
                if (x >= y) continue;
                if (edges[x].count(y)) return true;
            }
        }
    }
    return false;
}


// Returns \Omega_\sigma \cap (\delta \{0}\times \Z^3).
// 
// Let P be the vector of pairs returned by this function.
// We guarantee only that the union of Q_\delta(p_1, p_2), for 
// (p_1, p_2) \in P, covers \Omega_\sigma.
vector<pair<point,point>> Omega_discretized(double delta) {
    // Observe that \Omega_\sigma is a subset of \{0\} \times [-1, 1]^3.
    const double l = 1 + 3*delta;
    
    point p_1 = O;
    point p_2 = O;
    vector<pair<point,point>> P;
    for (p_1.y = -l; p_1.y <= l; p_1.y += (delta-EPS)) 
    for (p_2.x = -l; p_2.x <= l; p_2.x += (delta-EPS))
    for (p_2.y = -l; p_2.y <= l; p_2.y += (delta-EPS)) {
        auto cmp = [&](double x, double y, double lip_error) { 
            return x <= y + lip_error + EPS; 
        };
        if (cmp(0, p_1.y, delta/2) 
            and cmp(p_1.y, 1, delta/2)
            and cmp(norm(p_2), norm(p_1), sqrt(2)*delta)
            and cmp(2*sigma, norm(p_1 - p_2), sqrt(2)*delta)
            and cmp(0, p_2.x, delta/2))
                P.push_back({p_1, p_2});
    }
    return P;
}


// Returns the set X_\sigma^{\delta_1}(p_1, p_2, r, m).
//
// Let P be the vector returned by this function.
// We guarantee only that P is a \delta_1/sqrt{2}-net for the set 
// X_\sigma(p_1, p_2, r, m).
vector<point> X(double delta_1, const point p_1, const point p_2,
                double r, double m) {
    delta_1 -= EPS;
    vector<point> P;
    r += delta_1/sqrt(2);
    
    for (double x = -r; x <= r; x += delta_1) {
        for (double y = -r; y <= r; y += delta_1) {
            point q = {x, y};
            if (norm(q) <= r and M_sharp({O, p_1, p_2, p_1 + q}) 
                                 <= m + delta_1 / sqrt(2) + EPS)
                P.push_back(p_1 + q);
        }
    }
    return P;
}



// If it returns true, then (S.1)_\sigma^\delta holds for (p_1, p_2).
bool check_S1(double delta, point p_1, point p_2) {
    const double m_delta = sqrt(2) * delta;
    return M_sharp({O, p_1, p_2}) > m_delta + EPS;
}


// If it returns true, then (S.2)_\sigma^\delta holds for (p_1, p_2).
bool check_S2(double delta, point p_1, point p_2) {
    double m_delta = sqrt(2)*delta;
    double r_delta = 1 + delta / sqrt(2);
    
    for (double delta_1: {0.05, 0.02, 0.01}) {
        // We check the validity of (S.2)_\sigma^{\delta, \delta_1}.
        vector<point> X_2 = X(delta_1, p_2, p_1, r_delta, m_delta);
        double diam = compute_diameter(X_2);
        if (EPS + diam < 2*sigma - sqrt(2)*delta_1) return true;
    }
    return false;
}

// If it returns true, then (S.3)_\sigma^\delta holds for (p_1, p_2).
bool check_S3(double delta, point p_1, point p_2) {
    double m_delta = sqrt(2) * delta;
    double r_delta = 1 + delta / sqrt(2);
        
    const double delta_1 = 0.03;
    // We check the validity of (S.3)_\sigma^{\delta,\delta_1}.
    // Choosing delta_1 = 0.05 would not sufficient.

    vector<point> X_1 = X(delta_1, p_1, p_2, r_delta, m_delta);
    vector<point> X_2 = X(delta_1, p_2, p_1, r_delta, m_delta);
    
    double dist = 2*sigma - sqrt(2) * delta_1;
    double m_val = m_delta + sqrt(2) * delta_1;
    function<bool(point,point)> M_compatible = [&](point X, point Y) {
        return M_sharp({O, p_1, p_2, X, Y}, 2) < m_val + EPS;
    };
    
    // We construct the graph that has X_1 and X_2 as vertices.
    // Two vertices are connected by an edge if they can be simultaneously
    // cousins while having M_sharp < 0 (taking perturbations into account).
    int n = ssize(X_1) + ssize(X_2);
    
    auto vertex = [&](int it) {
        assert(0 <= it < n);
        if (it < ssize(X_1)) return X_1[it];
        return X_2[it - ssize(X_1)];
    };
    
    
    vector<bool> color(n, false);
    for (int i = 0; i < ssize(X_1); i++) color[i] = true;
    vector<set<int>> edges(n);
    // First we find edges between vertices of the same color.
    for (int i = 0; i < n; i++) for (int j = i+1; j < n; j++) {
        if (color[i] == color[j]) {
            point a = vertex(i), b = vertex(j);
            if (norm(a - b) > dist - EPS and M_compatible(a, b)) {
                edges[i].insert(j);
                edges[j].insert(i);
            }
        }
    }
    // Then edges between vertices of different type.
    for (int i = 0; i < n; i++) for (int j = i+1; j < n; j++) {
        if (color[i] != color[j]) {
            point a = vertex(i), b = vertex(j);
            if (!edges[i].empty() and !edges[j].empty() 
                                  and M_compatible(a, b)) {
                edges[i].insert(j);
                edges[j].insert(i);
            }
        }
    }    
    // At this point, we show that there is not a bicolored K_4.
    return !contains_bicolor_k4(n, color, edges);
}

int main() {
    cout << "sigma = " << sigma << endl;
    const double delta = 0.008;
    
    // A set of points such that the cubes with side delta
    // centered at these points cover the space of parameters.
    // See the header of Omega_discretized for more details.
    auto Omega_prime = Omega_discretized(delta);
    
    // We shuffle the subregions to have reasonable statistics even 
    // after having processed only a small number of them.
    std::mt19937 g(2024);
    shuffle(Omega_prime.begin(), Omega_prime.end(), g);
    
    cout << "The interesting region of parameters is covered by "
         << ssize(Omega_prime)
         << " cubes of side "
         << delta << "." << endl << endl;
    
    long cnt = 0, cnt_1 = 0, cnt_2 = 0, cnt_3 = 0;
    long long initial_time = get_time();
    long long last_print_time = get_time();
    
    // For each of subregion, we prove that there is no counterexample 
    // in such subregion.
    for (auto p_12: Omega_prime) {
        // Print status (and some statistics).
        long long current_time = get_time();
        if (current_time - last_print_time > 1e9) {
            cout << "Processed " << cnt << " / " 
                 << ssize(Omega_prime) << " cubes." << endl;
            cout << "The distribution of the strategies: "
                 << cnt_1 << " " << cnt_2 << " " << cnt_3 << "." << endl;
            cout << "Average time per cube: " 
                 << (current_time - initial_time) / (cnt * 1e9)
                 << " seconds." << endl << endl;
            last_print_time = current_time;
        }
        cnt++;
        
        point p_1 = p_12.first;
        point p_2 = p_12.second;
        
        // We refer to the headers of these functions for an explanation.
        // If any of them returns true, then (S.4) holds for all points 
        // in in the region Q_\delta(p_1, p_2).
        if (check_S1(delta, p_1, p_2)) cnt_1++;
        else if (check_S2(delta, p_1, p_2)) cnt_2++;
        else if (check_S3(delta, p_1, p_2)) cnt_3++;
        else {
            cout << "We are not able to handle the cube with side "
                 << delta << " centered at: " << p_1 << ", " << p_2 
                 << "." << endl;
            assert(0);
            return 1;
        }
    }
}

\end{minted}

\printbibliography

\end{document}